\newtheorem{theorem}{Theorem}[section]
\newtheorem{lemma}[theorem]{Lemma}
\newtheorem{proposition}[theorem]{Proposition}
\def\blacksquare{
\thinspace\nobreak \vrule width 5pt height 5pt depth 0pt}
\newenvironment{proof}{\begin{trivlist}
                       \item[]\hspace{0cm}{\bf Proof: }
                       \hspace{0cm} }{\hfill $\blacksquare$
                     \end{trivlist}}
\def\d{\; {\rm d}}
\def\bn{\mathbf n}
\def\O{{\cal O}}
\def\div{{{\rm div}\;}}
\def\curl{{{\rm curl}\;}}
\def\gd{{\nabla^\perp}}
\def\cprime{$'$}
\def\loc{{{\rm loc}}}
\def\supp{{{\rm supp}\;}}
\def\R{\mathbb R}
\def\sL{{\rm L}}
\def\sH{{\rm H}}
\def\C{{\cal C}}
\def\K{{\cal K}}
\def\eps{\varepsilon}
\def\epsa{\varepsilon^\alpha}
\def\zea#1{z^{\eps,\alpha}_{#1}}
\def\Kea#1{{\K^{\eps,\alpha}_{#1}}}
\def\dominf#1{\R^2\setminus#1}
\def\Pieam{\Omega^{\eps,\alpha,\mu}}
\def\Pie{\Omega^\eps}
\def\Nea{N_{\eps,\alpha}}
\def\ve{v^\eps}
\def\we{w^\eps}
\def\wek#1{w^{\eps,#1}}
\def\Nx{n_{1}}
\def\Ny{n_{2}}
\def\fe#1{\varphi^{\eps}_{#1}} 
\def\ome{\omega^\eps} 
\def\om0{\omega_{0}} 
\def\re{r^\eps}
\def\Kr#1{K_{\R^2}[#1]}
\def\ueps{u^\eps}
\def\Tc{{\cal T}}
\def\Tca#1{{\Tc^{\eps,\alpha}_{#1}}}
\title{Permeability through a perforated domain for the incompressible 2D Euler equations}
\author{V. Bonnaillie-No\"el, C. Lacave and N. Masmoudi}
\def\adrese{
\begin{description}
\item[V. Bonnaillie-No\"el:] IRMAR - UMR6625, ENS Rennes, Univ. Rennes 1, CNRS, UEB,
av Robert Schuman, 35170 Bruz, France.\\
Email: \texttt{bonnaillie@math.cnrs.fr}\\
Web page: {\footnotesize{\texttt{http://w3.bretagne.ens-cachan.fr/math/people/virginie.bonnaillie} }}
\item[C. Lacave:] Universit\'e Paris-Diderot (Paris 7), Institut de Math\'ematiques de Jussieu - Paris Rive Gauche, UMR 7586 - CNRS, B\^atiment Sophie Germain, Case 7012, 75205 PARIS Cedex 13, France.\\
Email: \texttt{lacave@math.jussieu.fr}\\
Web page: \texttt{http://www.math.jussieu.fr/$\sim$lacave/}
\item[N. Masmoudi:] Courant Institute, 251 Mercer St., New York, NY 10012, U.S.A.\\
Email: \texttt{masmoudi@cims.nyu.edu}\\
Web page: \texttt{http://www.math.nyu.edu/faculty/masmoudi/}
\end{description}
}
\begin{document}

\maketitle

\abstract{
We investigate the influence of a perforated domain on the 2D Euler equations. Small inclusions of size $\varepsilon$ are uniformly distributed on the unit segment or a rectangle, and the fluid fills the exterior. These  inclusions  are at least separated by a distance $\varepsilon^\alpha$ and we prove that for $\alpha$ small enough (namely, less than $2$ in the case of the segment, and less than $1$ in the case of the square), the limit behavior of the  ideal fluid does not feel the effect of the   perforated domain at leading order   when $\varepsilon\to 0$. 
} 


\section{Presentation} 

The homogenization of the Stokes operator and of the incompressible Navier-Stokes equations in a porous medium is by now a very classical problem  \cite{Sanchez80,Tartar80,Allaire90a,Mikelic91}. Recently,  more attention was given to the homogenization of other fluid models such as the compressible Navier-Stokes system \cite{Diaz99,Masmoudi02esaim}, the acoustic system  \cite{DAM12} and the incompressible Euler system \cite{MikelicPaoli,LionsMasmoudi,ILL}.   
 
The goal of this paper is to study the effect of small inclusions of size $\varepsilon$ on the behavior of an ideal  fluid governed by the 2D Euler system. One can expect that for very small holes which are well separated, the effect of the   inclusions  disappears at the limit. This is in the spirit of \cite{CM82,Allaire90b} where  critical sizes of the holes  where studied. 

\subsection{The perforated domain}

Let $\K$ be a smooth simply-connected compact set of $\R^2$, which is the shape of the inclusions. More precisely, we assume that $\partial \K$ is a $\C^{1,\alpha}$ Jordan curve. Without loss of generality, we assume that $0\in \stackrel{\circ}{\K}  \subset(-1,1)^2$.  Let $\alpha> 0$ and $\mu \in [0,1]$ be two parameters which represent how the inclusions fill the square $[0,1]^2$. 
For $i\geq 1$, $j\geq 1$ and $\varepsilon>0$, we define 
\begin{equation}\label{domain1}
\zea{i,j}:=(\eps+2(i-1)(\eps+\epsa),\eps+2(j-1)(\eps+\epsa))=(\eps,\eps)+2(\eps+\epsa)(i-1,j-1),
\end{equation}
 the centers of the inclusions of size $\eps$: 
\begin{equation}\label{domain2}
\K_{i,j}^{\eps,\alpha}:= \zea{i,j} + \eps \K. 
\end{equation}
The geometrical setting is represented in Figure~\ref{fig.config} (in the case where $\K=\overline{B}(0,1)$).
\begin{figure}[h!t]
\begin{center}
\subfigure[Inclusions along the line ${\cal R}_{\eps,\alpha,0}$\label{fig.configa}]{\includegraphics[height=1.5cm]{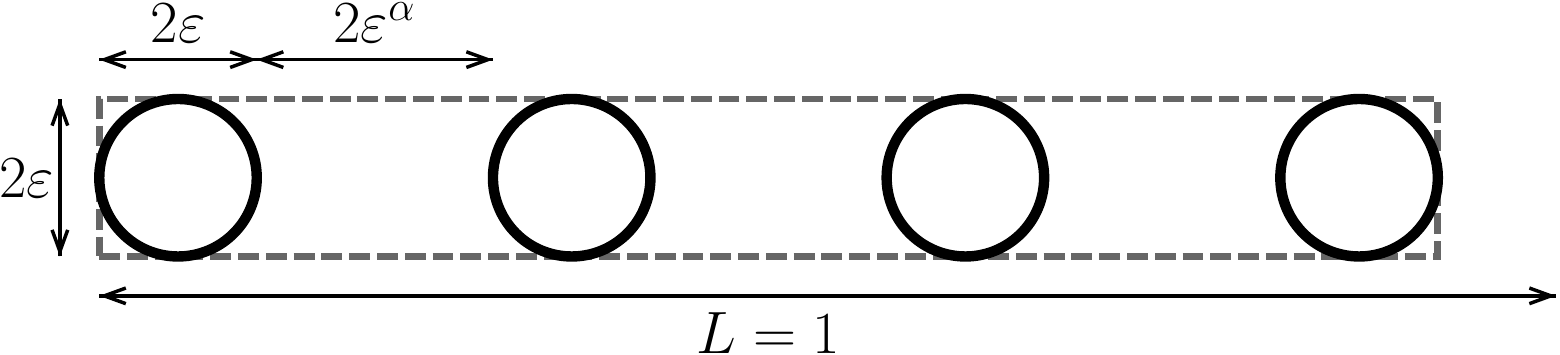}} \hspace{1cm}
\subfigure[Inclusions on the rectangle ${\cal R}_{\eps,\alpha,\mu}$\label{fig.configb}]{\includegraphics[height=4cm]{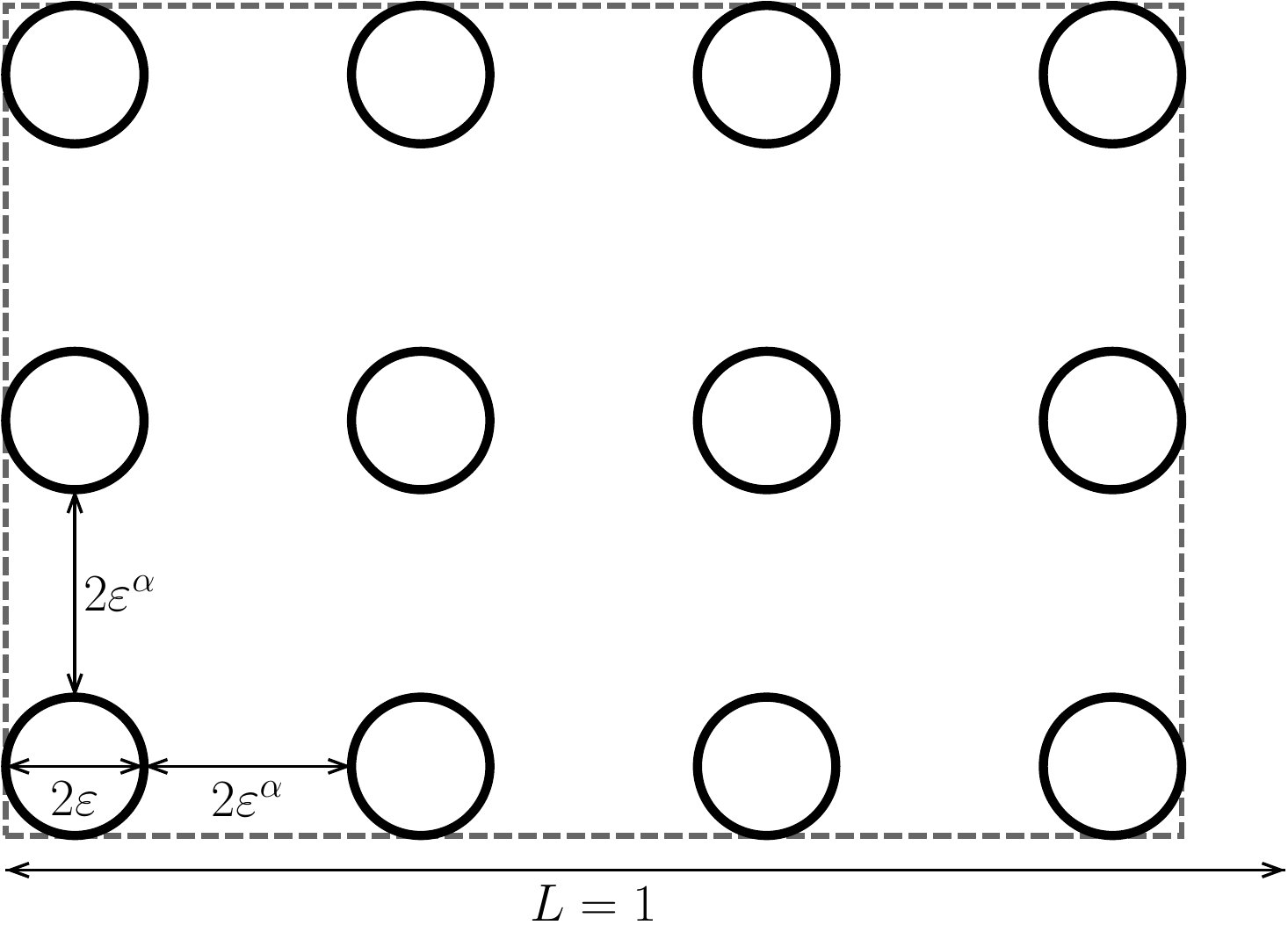}}
\caption{Geometrical settings.\label{fig.config}}
\end{center}
\end{figure}

Let $\Nea=\left[\frac{1+2\epsa}{2(\eps+\epsa)}\right]$ (where $[x]$ denotes the integer part of $x$) be the number of inclusions, of size $\eps$ and separated by $2\eps^\alpha$, that we can distribute on the unit segment $[0,1]$ (see Figure \ref{fig.configa}).  
In vertical axis, we assume that there are $\left[(\Nea)^\mu\right]$ inclusions of size $\eps$ at distance $2\epsa$ (with $\mu\in[0,1]$).
For shorter, we denote by $\Nx$ the number of inclusions along the horizontal axis and $\Ny$ those on the vertical axis:
$$\Nx:=\Nea\qquad\mbox{ and }\qquad \Ny:=\left[ (\Nea)^\mu \right].$$
We denote by ${\cal R}_{\eps,\alpha,\mu}$ the rectangle containing all the inclusions: 
\begin{equation}\label{eq.Ream}
{\cal R}_{\eps,\alpha,\mu}=[0,2(\eps+\epsa)\Nx-2\epsa] \times[0,2(\eps+\epsa)\Ny-2\epsa].
\end{equation}
Then the total number of inclusions in ${\cal R}_{\eps,\alpha,\mu}$ equals
\begin{equation}\label{eq.nbincl}
\Nx\Ny \leq (\Nea)^{1+\mu}\leq\left(\frac{1+2\epsa}{2(\eps+\epsa)}\right)^{1+\mu}\leq\frac{1}{(\eps+\epsa)^{1+\mu}},
\end{equation}
as soon as $\eps$ is small enough.

We notice that if $\mu=0$, then $n_2=1$ and there are just inclusions along a line. 
If $\mu=1$, then there are as many inclusions in both directions and in this case the rectangle ${\cal R}_{\eps,\alpha,\mu}$ is almost the square $[0,1]^2$.

We define $\Pieam$ the domain
\begin{equation}\label{domain3}
\Pie=\Pieam:=\dominf{\Bigl(\bigcup_{i=1}^{\Nx}\bigcup_{j=1}^{\Ny}\Kea{i,j}\Bigl)}.
\end{equation}
Since the parameters $\alpha$ and $\mu$ are fixed and we are interested in the limit $\eps\to 0$, the indices $\alpha$, $\mu$ will often be omitted in the notation for shorter.

\subsection{The Euler equations}

Let $\ueps=\ueps(t,x)=(\ueps_{1}(t,x),\ueps_{2}(t,x))$ be the velocity of an incompressible, ideal flow in $\Pie$. 
The evolution is governed by the Euler equations
\begin{equation}\label{eq.Euler}
\left\{\begin{array}{rclcl}
\partial_{t}\ueps+\ueps\cdot\nabla\ueps &=& -\nabla p^\eps & \mbox{ in }& (0,\infty)\times\Pie,\\
\div\ueps &=& 0 & \mbox{ in }& [0,\infty)\times\Pie,\\
\ueps\cdot\bn &=& 0 & \mbox{ in }& [0,\infty)\times\partial\Pie,\\
\lim_{|x|\to \infty}|\ueps(t,x)|&=&0 & \mbox { for }& t\in[0,\infty),\\
\ueps(0,x)&=&\ueps_{0}(x) & \mbox{ in }&\Pie.
\end{array}\right.
\end{equation}
Let $\ome$ be the vorticity defined by
$$\ome:=\curl\ueps = \partial_{1}\ueps_{2}-\partial_{2}\ueps_{1}.$$
The velocity and the vorticity satisfy
\begin{equation}
\label{eq.2}
\left\{\begin{array}{rclcl}
\div\ueps &=& 0 & \mbox{ in }& [0,\infty)\times\Pie,\\
\curl\ueps &=& \ome & \mbox{ in }& [0,\infty)\times\Pie,\\
\ueps\cdot\bn &=& 0 & \mbox{ in }& [0,\infty)\times\partial\Pie,\\
\lim_{|x|\to \infty}|\ueps(t,x)|&=&0 & \mbox { for }& t\in[0,\infty).
\end{array}\right.
\end{equation}

The initial velocity in \eqref{eq.Euler} has to verify:
\begin{equation}\label{initialcond}
\div \ueps_{0} =0 \text{ in } \Pie, \qquad \lim_{|x|\to \infty}|\ueps_{0}(x)|=0, \qquad \ueps_{0}\cdot\bn = 0 \text{ on } \partial \Pie. 
\end{equation}

As our domain depends on $\eps$, it is standard to give the initial data in terms of an initial vorticity independent of $\eps$. Physically, it is relevant to consider the following setting: we assume that the fluid is steady $\ueps_{0}\equiv 0$ at time $t<0$ (then $\ome(0,\cdot)\equiv 0$ and $\ueps_{0}$ has zero circulation around each inclusion) and at time $t=0$ we add, by an exterior force, a vorticity $\omega_{0}$. More precisely, let $\omega_{0}\in \C_{c}^\infty(\R^2)$, then we infer that there exists a unique vector field $\ueps_{0}$ verifying \eqref{initialcond} which has zero circulation around each inclusion and whose  curl is: $\curl \ueps_{0} = \ome_{0} := \omega_{0}\vert_{\Pie}$ (see e.g. \cite{Kikuchi,LLL}).

Then, the vorticity allows us to give an initial condition independent of $\varepsilon$, but the main advantage of the vorticity for the 2D Euler equations comes from the nature of the equations governing the vorticity:
\begin{equation}\label{eq.vort}
\left\{\begin{array}{rclcl}
\partial_{t}\ome+\ueps\cdot\nabla\ome &=& 0 & \mbox{ in }& (0,\infty)\times\Pie,\\
\div\ueps &=& 0 & \mbox{ in }& [0,\infty)\times\Pie,\\
\ueps\cdot\bn &=& 0 & \mbox{ in }& [0,\infty)\times\partial\Pie,\\
\curl\ueps &=& \ome & \mbox{ in }& [0,\infty)\times\Pie,\\
\lim_{|x|\to \infty}|\ueps(t,x)|&=&0 & \mbox { for }& t\in[0,\infty),\\
\oint_{\partial\Kea{i,j}}\ueps(0,s) \cdot \tau\d s&=&0 & \mbox { for  all }&\ i,j     ,\\
\ome(0,\cdot)&=& \omega_{0} &\mbox{ in }&\Pie.
\end{array}\right.
\end{equation}

We can show that the two systems \eqref{eq.Euler} and \eqref{eq.vort} are equivalent, but we obtain more properties from the second system because it is a transport equation. Thanks to this structure, for $\omega_{0}\in \C^\infty_{c}(\R^2)$, Kikuchi establishes in \cite{Kikuchi} that  there exists a unique global strong solution $\ueps$ of \eqref{eq.Euler}, such that $\ome$ belongs to $\sL^\infty(\R^+;\sL^1\cap \sL^\infty(\Pie))$. Actually, for a strong solution $\ueps$, the transport nature of \eqref{eq.vort} implies that:
\begin{itemize}
\item the $\sL^p$ norm of the vorticity is conserved for any $p\in [1,\infty]$:
\begin{equation}\label{est transport}
\|\ome(t,\cdot)\|_{\sL^p(\Pie)}=\|\ome_{0}\|_{\sL^p(\Pie)}\leq \|\omega_{0}\|_{\sL^p(\R^2)},\quad \forall t\geq0,\  \forall p\in[1,+\infty];
\end{equation}
\item the total mass of the vorticity is conserved:
\begin{equation}
\int_{\Pie}\ome(t,x)\d x = \int_{\Pie}\om0(x)\d x;
\end{equation}
\item at any time $t\geq 0$, the vorticity is compactly supported (but the size of the support can grow);
\item the circulation of $\ueps$ around each inclusion is conserved (Kelvin's theorem):
\begin{equation}
\oint_{\partial\Kea{i,j}}\ueps(t,s) \cdot \tau\d s=0 ,\quad \forall t\geq0,\  \forall i,j.
\end{equation}
\end{itemize}

\subsection{Issue and former results}

For $(\alpha,\mu) \in (0,\infty)\times [0,1)$, our domain $\Pie$ converges, in the Hausdorff sense, to the exterior of the unit segment $\R^2 \setminus ([0,1]\times\{0\})$ and for $(\alpha,\mu) \in (0,\infty)\times \{1\}$, it converges to the exterior of the unit square $\R^2 \setminus [0,1]^2$.
Indeed, we check easily that 
\begin{eqnarray*}
d_{H}\left(\bigcup_{j=1}^{\Ny}\bigcup_{i=1}^{\Nx}\Kea{i,j},{\cal R}_{\eps,\alpha,\mu}\right)
&=& \max\left(\sup_{x\in \bigcup_{i,j}\Kea{i,j}}d(x,{\cal R}_{\eps,\alpha,\mu}),\sup_{x\in{\cal R}_{\eps,\alpha,\mu}}d(x, \bigcup_{i,j}\Kea{i,j})\right)\\
&\leq&\max(0,\sqrt 2(\eps+\epsa)),
\end{eqnarray*}
and 
$$\begin{cases}
d_{H}({\cal R}_{\eps,\alpha,1},[0,1]^2) \leq \max(0,2(\eps+\epsa)) & \mbox{ for }\mu=1,\\
d_{H}({\cal R}_{\eps,\alpha,\mu},[0,1]\times\{0\}) \leq \max(2(\eps+\epsa)^{1-\mu},2(\eps+\epsa)) & \mbox{ for }\mu\in[0,1).
\end{cases}$$

{\it The issue of this article is to determine the limit of $(\ueps,\ome)$ when $\varepsilon$ tends to zero, for different values of $\alpha$ and $\mu$, and to compare the limit with the solution in the full plane, or in the exterior of a segment, or in the exterior of a square.}

The well-posedness of the Euler equations in the full plane is well-known since McGrath \cite{McGrath}. In the exterior of a sharp domain, let us mention that the existence of a global weak-solution to the Euler equations in the exterior of the segment, such that $\omega_{0} \in \sL^\infty(\R^+;\sL^1\cap \sL^\infty(\R^2\setminus  ([0,1]\times\{0\})))$, is established  in \cite{Lac-curve}. Such a result is recently extended to the exterior of any connected compact set in \cite{GV-Lac}, for example outside the unit square.

Physically, we can preview that we do not feel the presence of the inclusions for small $\alpha$ (i.e. $\varepsilon^\alpha \gg \varepsilon$) and for small $\mu$, whereas it should appear a wall for $\mu\in [0,1)$ and $\alpha$ large, and the unit square for $\mu=1$ and $\alpha$ large. Moreover, we can think that the critical $\alpha$ should be a decreasing function in terms of $\mu$.

The study of the Euler equations in the exterior of one small obstacle was initiated by Iftimie, Lopes Filho and Nussenzveig Lopes in \cite{ILL}. In that paper, the authors consider only one obstacle which shrinks homotetically to a point, and indeed, if the initial circulation is zero, then their result reads as the solution $(\ueps,\ome)$ converges to the solution in the full plane. Later Lopes Filho has treated in \cite{Lopes} the case of several obstacles in a bounded domain when one of them shrinks to a point. The final result is the same: if initially the circulation is zero, we do not feel the presence of the point at the limit. Finally the last generalization can be found in \cite{LLL} where an infinite number of obstacles is considered. We quote here the theorem in the case where all the initial circulations are equal to zero:
\begin{theorem} \label{theo LLL}
Let $\omega_0\in \C^\infty_{c}(\R^2)$. Let us also fix $R_0>0$ such that ${\rm supp}\  \omega_{0} \subset B(0,R_0)$. For any sequences $\{ z_i^{k}\}_{i=1\dots n_k}\in B(0,R_{0})^{n_k}$, there exists a subsequence, again denoted $k$, and a sequence  $ \eps_{k}\in \R^+_*$ tending to zero such that the solutions $(u^{k} ,\omega^{k})$ of \eqref{eq.vort} in 
\[\Omega^{k}:= \R^2\setminus \Bigl(\bigcup_{i=1}^{n_k} \overline{B}(z_i^k,\eps_k)\Bigl),\]
with initial vorticity $\omega_{0}\vert_{\Omega^k}$ and initial circulations $0$ around the balls, verify
\begin{itemize}
\item[(a)] $ u^{k} \to u$ strongly in $\sL^p_{\loc}(\R_+\times\R^2)$ for any $p\in [1,2)$;
\item[(b)] $ {\omega}^{k} \rightharpoonup {\omega}$ weak $*$ in $\sL^\infty(\R_+; \sL^{q}(\R^2))$ for any $q\in [1,\infty]$;
\item[(c)] the limit pair $(u,{\omega})$ is the unique solution of the Euler equations in the full plane, with initial vorticity $\omega_{0}$.
\end{itemize}
\end{theorem}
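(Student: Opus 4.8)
The plan is to follow the standard strategy for this kind of homogenization result for 2D Euler: derive $\eps$-uniform a priori estimates on $(u^k,\omega^k)$, extract weakly/strongly converging subsequences, pass to the limit in the weak formulation of the vorticity equation, and identify the limit as the Euler solution in the full plane by appealing to its uniqueness (McGrath). The key structural point is that zero initial circulation together with Kelvin's theorem keeps the circulation zero for all time, so the velocity is, up to the contribution of $\omega^k$, essentially the Biot--Savart field in the full plane restricted to $\Omega^k$; this is what makes the holes invisible at leading order. I expect the main work — and the main obstacle — to be the compactness of the velocities, because the domains $\Omega^k$ vary with $k$ and the Biot--Savart law is nonlocal.

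First I would collect the a priori bounds. From \eqref{est transport} the $\sL^p$ norms of $\omega^k$ are bounded uniformly in $k$ for every $p\in[1,\infty]$ by $\|\omega_0\|_{\sL^p(\R^2)}$, and the transport/finite-speed-of-propagation argument controls the growth of the support of $\omega^k$ on each finite time interval, uniformly in $k$. Extending $\omega^k$ by zero inside the holes, this gives a subsequence with $\omega^k\rightharpoonup\omega$ weak-$*$ in $\sL^\infty(\R_+;\sL^q(\R^2))$ for all $q$, which is item (b); the choice of the subsequence (and of $\eps_k\to0$, since the points $z_i^k$ are only assumed to lie in a fixed ball and may accumulate) is made here, exactly as in \cite{LLL}. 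Next I would turn to the velocity: writing $u^k = K_{\Omega^k}[\omega^k]$ for the Biot--Savart operator in $\Omega^k$ with zero circulations, I would compare it with $K_{\R^2}[\omega^k]$, the full-plane Biot--Savart field of the same (zero-extended) vorticity. The difference solves a div-curl system in $\Omega^k$ with zero curl, vanishing normal component coming only from the trace of $K_{\R^2}[\omega^k]\cdot\bn$ on the small circles, and zero circulation; an energy estimate, exploiting that the total perimeter of the holes shrinks and that $K_{\R^2}[\omega^k]$ is uniformly bounded in $\sH^1_{\loc}$ (hence its boundary traces are small in the relevant norm), shows this difference tends to zero in $\sL^p_{\loc}$ for $p<2$. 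Meanwhile $K_{\R^2}[\omega^k]\to K_{\R^2}[\omega]$ strongly in $\sL^p_{\loc}$ for $p<2$ by weak continuity of the full-plane Biot--Savart operator plus the support control. Combining gives (a) with $u = K_{\R^2}[\omega]$.

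Finally I would pass to the limit in the weak formulation. For a test function $\phi\in\C^\infty_c([0,T)\times\R^2)$, the equation $\partial_t\omega^k + u^k\cdot\nabla\omega^k = 0$ in $\Omega^k$ yields, after integration by parts (and noting the boundary terms on $\partial\Omega^k$ vanish because $u^k\cdot\bn=0$ there),
\begin{equation*}
\int_0^T\!\!\int_{\R^2}\omega^k\,\partial_t\phi\d x\d t + \int_0^T\!\!\int_{\R^2}\omega^k\,u^k\cdot\nabla\phi\d x\d t + \int_{\R^2}\omega_0\,\phi(0,\cdot)\d x = 0,
\end{equation*}
up to an error controlled by the measure of $\bigcup_{i}\overline{B}(z_i^k,\eps_k)$ inside $\supp\phi$, which vanishes. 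The linear terms pass to the limit by weak-$*$ convergence of $\omega^k$; the quadratic term $\omega^k u^k$ passes because $\omega^k\rightharpoonup\omega$ weakly while $u^k\to u$ strongly in $\sL^2_{\loc}$ (here one uses $p<2$ together with the uniform $\sL^\infty$ bound on $\omega^k$ to upgrade, or simply interpolates), so the product converges in the sense of distributions. This identifies $(u,\omega)$ as a weak solution of the 2D Euler equations in the full plane with initial vorticity $\omega_0$; since $\omega_0\in\C^\infty_c(\R^2)$, McGrath's theorem \cite{McGrath} gives uniqueness, hence the whole sequence (along the extracted subsequence) converges and the limit is the unique Euler solution, which is (c). The delicate point throughout is step two, the velocity comparison: one must be careful that the energy estimate for $u^k - K_{\R^2}[\omega^k]$ genuinely closes despite the number of holes possibly growing, which is where the hypotheses on the configuration — finitely many holes $n_k$, all in a fixed ball, and the freedom to pass to a subsequence in $\eps_k$ — are essential, and it is exactly this interplay that the later sections of the paper must handle in the quantitative setting with $\eps^\alpha$-separation.
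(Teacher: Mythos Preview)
This theorem is not proved in the present paper: it is quoted from \cite{LLL} as a prior result (note the sentence introducing it, ``We quote here the theorem in the case where all the initial circulations are equal to zero''). There is therefore no proof in the paper to compare your proposal against.

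That said, your outline is broadly in the spirit of the strategy the paper deploys for its own main result, Theorem~\ref{main theo}, and you correctly flag the velocity comparison as the crux. Two comments. First, the distinguishing feature of Theorem~\ref{theo LLL} is that $\eps_k$ is \emph{not} prescribed: one may choose it after seeing the centers $\{z_i^k\}$, in particular far smaller than any minimal inter-center distance. This is precisely the softness the paper stresses immediately after the statement (``there is no control on $\eps_k$ in terms of the distance between the points''), and your energy argument for $u^k - K_{\R^2}[\omega^k]$ would have to exploit that freedom explicitly; the phrase ``the total perimeter of the holes shrinks'' does not by itself control the harmonic correction in $\sL^p$. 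Second, for its quantitative theorem the paper does \emph{not} use an energy estimate of the type you sketch: it constructs an explicit approximate Biot--Savart field $v^\eps$ via cutoffs and the single-obstacle Riemann map (Section~\ref{sect 2}), bounds $w^\eps = K_{\R^2}[\omega^\eps] - v^\eps$ directly in $\sL^2$ by hand (Section~\ref{sect fixed time}), and then recovers $r^\eps = u^\eps - v^\eps$ as the Leray projection of $w^\eps$ (Lemma~\ref{lem.decomp}). Whether your softer route can be carried out for the configuration of Theorem~\ref{theo LLL} is plausible, but to confirm it you would need to consult \cite{LLL} itself.
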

In that theorem, we have extended $u^k$ and $\omega^k$ by zero in $(\Omega^{k})^c$. Therefore, we could consider $z_{i,j}^\eps$ as in our configuration (see the first subsection), however there is no control on $\eps_k$ in terms of the distance between the points. The size of the ball can be very small compare to this distance (i.e. $\alpha \ll 1$), and the goal of this article is to get this control. Let us mention that all the works cited before \cite{GV-Lac,ILL,Lac-curve,LLL,Lopes} consider also non-zero initial circulations, and in particular around small obstacles \cite{ILL,LLL,Lopes} the authors find a reminiscent term which appears from the vanishing obstacles. Removing the assumption of zero initial circulations in the present work could be the subject of a future research.

Before stating our result, we also mention a work with the opposite result. The third author and Lions have treated in \cite{LionsMasmoudi} a case which is close to our configuration with $(\alpha,\mu)=(1,1)$. We write ``close'' because that article considers bounded domains $[0,1]^2\setminus \Bigl(\bigcup_{j=1}^{\Ny}\bigcup_{i=1}^{\Nx}\Kea{i,j}\Bigl)$ and the initial condition is not exactly as us. Nevertheless, in the spirit of homogenization and two scale convergence, the authors prove that the limit solution is not the Euler solution in the unit square but rather a  two-scale system that describes the limit behavior. 
In particular the limit solution depends on the shape of the obstacles.

\subsection{Result}

As we can expect, our main result reads as {\it for any $\mu$ there exists a critical $\alpha_c(\mu)$, such that for any $\alpha$ less than $\alpha_c(\mu)$,  the perforated domain is perfectly permeable, i.e. the presence of the inclusions does not perturb the behavior of a perfect fluid.} More precisely:

\begin{theorem} \label{main theo} Let $\Pie$ defined in \eqref{domain1}--\eqref{domain3}, then for all $\mu \in [0,1]$, we define
\[
\alpha_c(\mu)=2-\mu.
\]
Let  $\omega_0$ be a smooth function compactly supported in $\R^2$, $\alpha\in(0, \alpha_{c}(\mu))$ and any sequence  $\varepsilon \to 0$, then the solutions $(\ueps ,\ome)$ of \eqref{eq.vort} in $\Pie$ with initial vorticity $\omega_{0}\vert_{\Pie}$ and initial circulations $0$ around the inclusions, verify:
\begin{itemize}
\item[(a)] $\ueps \to u$ strongly in $\sL^2_{\loc}(\R^+\times\R^2)$;
\item[(b)] $ \ome \rightharpoonup {\omega}$ weak $*$ in $\sL^\infty(\R^+\times\R^2)$;
\item[(c)] the limit pair $(u,{\omega})$ is the unique global solution to the Euler equations in the full plane $\R^2$, with initial vorticity $\omega_{0}$.
\end{itemize}
\end{theorem}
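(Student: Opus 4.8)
The plan is to follow the classical compactness strategy for the 2D Euler equations in exterior domains, but with all estimates made quantitative in $\eps$ so that the geometric restriction $\alpha<\alpha_c(\mu)=2-\mu$ enters at exactly one point. First I would write the velocity through the Biot--Savart law in $\Pie$: since the circulations around every inclusion vanish, $\ueps(t,\cdot)=K_\eps[\ome(t,\cdot)]$, where $K_\eps$ is the Biot--Savart operator of $\Pie$. Comparing with the full-plane kernel, write $\ueps = \Kr{\ome} + \re$, where $\re$ is the ``corrector'' field, harmonic in $\Pie$, tangent to $\partial\Pie$, decaying at infinity, and carrying zero circulation around each $\Kea{i,j}$. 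Using the conformal map or the potential-theoretic representation of $\re$ (as in \cite{ILL,LLL}), the key quantitative lemma I would establish is that $\|\re(t,\cdot)\|_{\sL^2(\Pie)}\to 0$ as $\eps\to0$, uniformly on compact time intervals. This is the heart of the matter: one must show that the total ``blocking effect'' of $\Nx\Ny$ inclusions, each of capacity $\sim\eps$ (in 2D, logarithmic capacity), is negligible, and here one uses the bound \eqref{eq.nbincl} on the number of inclusions together with the separation $\epsa$. The energy of the corrector associated to a single inclusion of size $\eps$ against a smooth background scales like a positive power of $\eps$ (times possibly a logarithm), and summing over $\Nx\Ny\le (\eps+\epsa)^{-(1+\mu)}$ inclusions gives a bound of the form $\eps^{\text{something}}\cdot\eps^{-\alpha(1+\mu)}$, which tends to zero precisely when $\alpha(1+\mu)<$ that something; matching exponents is where $\alpha<2-\mu$ comes from. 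I expect this summation/capacity estimate to be the main obstacle, both because the inclusions interact (so one cannot simply add single-inclusion correctors) and because one needs the sharp power of $\eps$.

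Granting the corrector estimate, the rest is routine. From \eqref{est transport} the vorticities $\ome$ are bounded in $\sL^\infty(\R^+;\sL^1\cap\sL^\infty)$, so after extending by zero they have a weak-$*$ limit $\omega$ along a subsequence, giving (b). For the velocity, $\Kr{\ome}$ is bounded in $\sL^\infty(\R^+;\sL^2_{\loc})$ with a uniform modulus of continuity in time (estimate $\partial_t\Kr{\ome}=\Kr{\partial_t\ome}$ via the transport equation, or use the standard $\sL^p$--$\sL^q$ mapping properties of the Biot--Savart kernel together with the uniform transport bound on $\ome$), so by Aubin--Lions $\Kr{\ome}\to \Kr{\omega}$ strongly in $\sL^2_{\loc}(\R^+\times\R^2)$; combined with $\re\to0$ in $\sL^2_{\loc}$ this yields $\ueps\to u:=\Kr{\omega}$ strongly in $\sL^2_{\loc}$, which is (a).

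Finally, to identify the limit (c), I would pass to the limit in the weak formulation of the vorticity equation \eqref{eq.vort}. Take a test function $\varphi\in\C^\infty_c([0,\infty)\times\R^2)$; because the inclusions shrink to a set of zero Lebesgue measure (the segment or the square, which has measure zero only when $\mu<1$ — but even for $\mu=1$ the obstacle $[0,1]^2$ is handled by using test functions and noting the boundary contributes nothing at leading order for the transported vorticity, as the support of $\omega$ spreads with finite speed and the claim concerns the full-plane solution), the term $\int\ome\,\partial_t\varphi$ and the nonlinear term $\int \ome\, \ueps\cdot\nabla\varphi$ converge to their full-plane counterparts: the first by weak-$*$ convergence of $\ome$, the second because $\ome\rightharpoonup\omega$ weak-$*$ while $\ueps\to u$ strongly in $\sL^2_{\loc}$ (a weak-times-strong argument, exactly as in \cite{ILL,LLL}). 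Hence $(u,\omega)$ is a weak solution of the 2D Euler equations in the whole plane with initial data $\omega_0$; since $\omega_0\in\C^\infty_c(\R^2)$, such a solution is unique by Yudovich's theorem (McGrath \cite{McGrath} in the full-plane setting), so the whole family — not just a subsequence — converges, completing the proof.
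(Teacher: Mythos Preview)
Your overall strategy is sound, and the compactness/passage-to-the-limit part is essentially what the paper does (the paper uses a slightly different time-compactness argument --- a uniform $\sH^{-1}$ bound on $\partial_t\ome$, diagonal extraction over $t\in\Q$, and dominated convergence for $\Kr{\ome-\omega}$ --- but your Aubin--Lions route would also work). However, the central step, namely the $\sL^2$ estimate on your corrector $\re=\ueps-\Kr{\ome}$, has a genuine gap: you correctly note that ``one cannot simply add single-inclusion correctors'' because the inclusions interact, but you do not say how to get around this, and your proposed exponent bookkeeping (energy $\sim\eps^{\text{power}}$ per inclusion, then multiply by $\Nx\Ny\le \eps^{-\alpha(1+\mu)}$) is not how the sharp threshold $\alpha<2-\mu$ actually arises.

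The paper's device is to avoid estimating the true multi-hole corrector directly. Instead it builds an \emph{explicit} approximate velocity $v^\eps[\ome]=\nabla^\perp\psi^\eps$ by gluing, via cutoffs $\fe{i,j}$ supported in disjoint $\eps^\alpha$-neighborhoods of the inclusions, the single-obstacle stream function (through the Riemann map $\Tca{i,j}$) onto the full-plane stream function. This $v^\eps$ is exactly tangent to every $\partial\Kea{i,j}$ and has zero circulations. One then sets $\we:=\Kr{\ome}-v^\eps$, which has a completely explicit formula and is supported on $\cup_{i,j}\supp\fe{i,j}$; a direct computation (splitting into four pieces $\wek{1},\ldots,\wek{4}$ and separating near/far contributions via $\Tc(z)=\beta z+h(z)$) gives $\|\we\|_{\sL^2}\le C|\ln\eps|\,\eps^{(2-\alpha-\mu)/2}\to0$ precisely when $\alpha<2-\mu$. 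The interaction problem is then dispatched in one line: $\re_{\text{paper}}:=\ueps-v^\eps$ has the same curl, divergence, circulations and boundary tangency as the Leray projection of $\we$ on $\Pie$, hence $\re_{\text{paper}}=\mathbb P^\eps(\we)$ and $\|\re_{\text{paper}}\|_{\sL^2(\Pie)}\le\|\we\|_{\sL^2}$. Your $\re$ equals $\re_{\text{paper}}-\we$, so the same bound follows. The crucial idea you are missing is this intermediate field $v^\eps$ and the Leray-projection trick; without them one is stuck with a genuinely coupled many-body potential problem.
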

Again, in the previous theorem and in all the sequel, we extend $(\ueps,\ome)$ by zero inside the inclusions.

We note that the function $\mu\mapsto \alpha_c(\mu)$ is continuous, decreasing, positive, such that $\alpha_c(0)=2$ and $\alpha_c(1)=1$. Finally, this result does not depend on the shape of the inclusions.

\medskip

Even if zero circulation are treated in the previous theorem, the goal here is to investigate the effect of the ratio distance/size of the inclusions, an important parameter not controlled in \cite{LLL}. Such a question is investigated by the first author on some elliptic problems such that the Laplace and Navier equations in \cite{BDTV09,BBDHTV11,BD13}, and we emphasize that the Euler equations are linked to such a problem. Indeed we already have a good control of the $\sL^p$ norm of the vorticity, and the velocity can be deduced from the vorticity by a kernel of the 
  type $\nabla^{\perp} \Delta^{-1}$.

We note that a possible extension can be made considering a  less regular $\omega_{0}$, 
  belonging to the space  $\sL^1 \cap \sL^p(\R^2)$ for some $p>2$.

An important future work will be to prove that this $\alpha_{c}(\mu)$ is well critical, in the sense that we note a non negligible effect from the inclusions if $\alpha\geq \alpha_{c}(\mu)$. In fact, the result in \cite{LionsMasmoudi} is already a first hint that 
  it is well the case at least in the case $\mu =1$ and for any type of obstacles. 

Our result should be compared  with critical values obtained  with other equations. The study of the behavior of a flow through a porous medium has a long story in the homogenization framework. The most common setting is to consider a bounded domain $\Omega$ containing many tiny solid obstacles, distributed in each direction. For the Stokes equations with Dirichlet boundary condition, 
 Cioranescu and Murat considered  the case where the ratio $R^\eps := $ (size of the inclusions)/(distance) is $e^{-1/\varepsilon^2}/\varepsilon$, and they obtained  in \cite{CM82} that the limit equation   
  contains an additional term due to the holes. Concerning Stokes and Navier-Stokes, Allaire extensively treated  the previous problem, for e.g. in \cite{Allaire90b} he showed 
  that if  $R^\eps \ll e^{-1/\varepsilon^2}/\varepsilon $   (the rate  of   Cioranescu-Murat),
  the limit is  the Stokes system (hence we do not feel the presence of the inclusions).  If  $R^\eps  \gg  e^{-1/\varepsilon^2}/\varepsilon $, 
 we get the Darcy law (which was well known in the case where the ratio  is 
 $\varepsilon/\varepsilon$, see references in \cite{Allaire90a}). 
And if  $R^\eps  =   e^{-1/\varepsilon^2}/\varepsilon $,  we get 
 the Brinkman type law. Therefore, the above study has treated every case for the viscous problem, and we note that the critical rate $e^{-1/\varepsilon^2}/\varepsilon$ is very small compared  to $\varepsilon/\varepsilon$, which is the rate obtained in the case of the square ($\mu=1$, $\alpha=\alpha_{c}(1)=1$). 

However, an important question is to understand what is the role of the viscosity in the determination of the critical rate (see \cite{MikelicPaoli} for more motivation). For a modified Euler equations, Mikeli\'c-Paoli \cite{MikelicPaoli} and Lions-Masmoudi \cite{LionsMasmoudi} consider a bounded domain perforated in both direction where the rate is $\varepsilon/\varepsilon$, and the limit homogenized system takes into account of the inclusions. Our result is complementary of these articles.

There are also many works concerning inclusions distributed on the unit segment (through grids, sieves or porous walls, we refer e.g. to Conca-Sep{\'u}lveda \cite{conca} and Sanchez-Palenlencia \cite{SP1}). In this setting, the study of the Stokes and Navier-Stokes system is performed by Allaire \cite{Allaire90b}, where he obtained the similar result than before, except that the critical rate is $e^{-1/\varepsilon}/\varepsilon$, which is naturally bigger than $e^{-1/\varepsilon^2}/\varepsilon$, but which stays to be very small compared to our rate: $\varepsilon/\varepsilon^2$ ($\mu = 0$, $\alpha=\alpha_{c}(0)=2$).

\subsection{Plan of the paper}

Thanks to the transport nature of the equation governing the vorticity, we will deduce easily from \eqref{est transport} the point {\it (b)} of Theorem \ref{main theo} from the Banach Alaoglu theorem. In the sequel, we keep the notation $\varepsilon$ even if we extract a subsequence. Indeed, as the limit pair is unique, we will be able to conclude that the limit is the same for any subsequence, so for the full sequence.

The difficulty is to prove {\it (a)}, i.e. that $\ueps=\ueps[\ome]$ converges to $u$ with
\begin{equation}\label{Kr}
u(x):=\Kr{\omega}(x)=\frac1{2\pi} \int_{\R^2}\frac{(x-y)^\perp}{|x-y|^2}\omega(y) \d y,\quad\forall x\in \R^2.
\end{equation}
This formula is the well-known Biot-Savart law in the full plane, i.e. which gives the unique vector field in $\R^2$ which is divergence free, tending to zero at infinity, and whose 
  curl is $\omega$. We need a strong convergence for the velocity in order to pass to 
  the limit in the vorticity equation, and to conclude that the limit pair is well a weak solution of the Euler equations in the full plane. By uniqueness of weak solution (see \cite{yudo}), it will end the proof of {\it (c)} and Theorem \ref{main theo}.

\medskip
 
The main idea is to introduce an explicit  modification of 
  $\Kr{\ome}$, denoted by $\ve[\ome]$,  in order to have a tangent vector field in $\Pie$ whose   curl is $\ome$ plus a small error term. In Section \ref{sect 2}, we recall the explicit formula of the Biot-Savart law in the exterior of one obstacle $\K$, thanks to the Riemann mapping which sends $\K^c$ to $\overline{B}(0,1)^c$, and we present a construction of this 
 modification, based on some cut-off functions around each inclusion.

Then we will write the decomposition:
\begin{equation}\label{decomp}
\begin{array}{rlccccc}
\ueps - u &= &\Bigl(\ueps[\ome]- \ve[\ome]\Bigl)&+&\Bigl(\ve[\ome]-\Kr{\ome} \Bigl) &+& \Kr{\ome-\omega}\ \\
&=:&\re[\ome]&-&\we[\ome]&+& \Kr{\ome-\omega}.
\end{array}
\end{equation}

The central part of this article will be Section \ref{sect fixed time}: at time $t$ fixed, we will look for the critical value of $\alpha$ (in terms of $\mu$), below which the convergence of $\we$ to zero in $\sL^2(\R^2)$ holds. This will follow from a careful study of the explicit formula. Next, we will simply note that $\re$ is the Leray projection 
  of $\we$. As this projector is orthogonal in $\sL^2$, this will give the convergence of $\re$ to zero in $\sL^2(\R^2)$. Thanks to these two convergences, we will prove in Section \ref{sect conclusion} the main theorem.

In the sequel,   $C$ will denote a constant independent of the underlying parameter (which will often be $\varepsilon$), the value of which can possibly change from a line to another.

\section{Explicit formula of the correction}\label{sect 2}

In $\Pie$, we note that $\ueps$ (solving \eqref{eq.2} and having zero circulation around each inclusion) and $\Kr{\ome}$ (see \eqref{Kr}) are divergence free, with the same curl and the same limit at infinity. Moreover, they have the same circulations because we compute by the Stokes formula that 
$$\oint_{\partial\Kea{i,j}}\Kr{\ome}(s) \cdot \tau\d s= \int_{\Kea{i,j}}\ome(x)\d x=0.$$ 
The only differences are that $\Kr{\ome}$ is not tangent to $\partial \Kea{i,j}$, and that we do not have an explicit formula of $\ueps$ in terms of $\ome$. The goal of this section is to correct this lack of tangency.

In this section, we fix the time $t$, i.e. we consider $f$ as a function depending only on $x\in \R^2$, belonging in $\sL^1\cap \sL^\infty(\R^2)$ whose  support is bounded.

\subsection{The Biot-Savart law in an  exterior domain}

In the full plane, we know that there is a unique vector field $u$ satisfying in $\R^2$:
\[
\div u =0, \qquad \curl u= f, \qquad \lim_{|x|\to \infty} |u(x)|=0,
\]
which is given by the standard Biot-Savart formula:
\begin{equation}\label{BS plane}
u(x)=\Kr{f}(x):=\frac1{2\pi} \int_{\R^2}\frac{(x-y)^\perp}{|x-y|^2}f(y) \d y=\frac1{2\pi} \nabla^\perp  \int_{\R^2}\ln|x-y| f(y) \d y,\quad\forall x\in\R^2.
\end{equation}
It is also well known (see e.g. \cite{MajdaBertozzi}) that there is a universal constant $C$ such that
\begin{equation}\label{est biot}
\| \Kr{f} \|_{\sL^\infty(\R^2)} 
\leq \Big\|\frac{1}{2\pi} \int_{\R^2}\frac{|f(y)|}{|x-y|} \d y\Big\|_{\sL^\infty(\R^2)}
\leq C \|f\|_{\sL^1(\R^2)}^{1/2} \|f \|_{\sL^\infty(\R^2)}^{1/2},
\end{equation}
and if $f$ is compactly supported, we have the following behavior at infinity :
\[
 \Kr{f}(x)=\frac{\int_{\R^2} f(y)\d y}{2\pi}\ \frac{x^\perp}{|x|^2}+  \mathcal{O}\left(\frac1{|x|^2}\right).
\]
We note here that considering $u_{0}=\Kr{\omega_{0}}\in \sL^2(\R^2)$ is too restrictive because it would imply that $\int \omega_{0}=0$.

\medskip

In the exterior of a unit disk in dimension 2, we have again an explicit formula for the Biot-Savart law: there exists a unique vector field ${\mathsf u}[f]$ solving in $\R^2\setminus B(0,1)$:
\begin{equation*}
\begin{split}
\div {\mathsf u}[f] =0 , \qquad \curl {\mathsf u}[f] =f , \qquad \lim_{|x|\to \infty}|{\mathsf u}[f](x)|=0, \\
 {\mathsf u}[f]\cdot\bn\vert_{\partial B(0,1)} = 0  , \qquad  \oint_{\partial B(0,1)}{\mathsf u}[f](s) \cdot \tau\d s =0.
\end{split}
\end{equation*}
This vector field ${\mathsf u}[f]$ is given explicitly by:
\begin{equation*}
\begin{split}
 {\mathsf u}[f](x) =&\frac{1}{2\pi} \int_{B(0,1)^c} \Bigg( \frac{x-y}{|x-y|^2}-\frac{x-y^*}{|x-y^*|^2}\Bigg)^{\perp} f(y) \d y + \frac{\int_{B(0,1)^c} f(y) \d y}{2\pi} \frac{x^\perp}{|x|^2}\\
 =& \frac{1}{2\pi} \nabla^\perp \int_{B(0,1)^c} \ln \frac{|x-y| |x|}{|x-y^*|} f(y) \d y,
\end{split}
\end{equation*}
with the notation $z^*=z/|z|^2$ (coming from the image method in order to have a tangent vector field). As we have mentioned in the introduction, solving the elliptic equation \eqref{eq.2} is equivalent to solving  $\Delta \psi = f$, where  $\psi$ is  constant on  the boundary (here, the boundary has  only one connected component) and setting $u:=\nabla^\perp \psi$. Hence, the previous Biot-Savart law comes from the explicit formula of the Green's function in $B(0,1)^c$. Another advantage of the dimension two is that we can extend this formula to the exterior of any simply-connected compact set $\K$: thanks to the complex analysis (identifying $\R^2$ and $\mathbb{C}$) and the fact that holomorphic function is a good change of variable for the Laplace problem. By the Riemann mapping theorem, there exists a unique biholomorphism $\Tc$ mapping $\K^c$ to $\overline{B}(0,1)^c$ and verifying $\Tc(\infty)=\infty$ and $\Tc'(\infty)\in \R^+$. The last condition reads in the Laurent decomposition of $\Tc$ at infinity:
\begin{equation*} 
\Tc(z) = \beta z + \gamma + \O_{z\to\infty}\Big(\frac 1z\Big), \quad\mbox{ with }\quad \beta \in \R^+.
\end{equation*}
Then, we will use several times that
\begin{equation}\label{T expansion}
\Tc(z) = \beta z  +h(z),
\end{equation}
where $h$ is an holomorphic function satisfying at infinity $h(z)=\O(1)$ and $h'(z)=\O(1/|z|^2)$. Of course we have a  similar behavior for $\Tc^{-1}$.\\
In the sequel, we will need  a kind of mean value theorem in a non convex domain given by the following lemma:
\begin{lemma}\label{lem.TT-1}
We assume that $\K$ is a compact set such that $\partial\K$ is a $\C^{1,\alpha}$ Jordan curve. There exists $C$ such that 
\begin{equation*}
\begin{split}
|\Tc(x)-\Tc(y)|\leq C|x-y|,&\qquad\forall (x,y)\in(\K^c)^2,\\
|\Tc^{-1}(x)-\Tc^{-1}(y)|\leq C|x-y|,&\qquad\forall (x,y)\in(\overline B(0,1)^c)^2.\\
\end{split}
\end{equation*}
\end{lemma}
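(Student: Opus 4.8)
The statement is a Lipschitz (bi-Lipschitz, really) bound for the Riemann map $\Tc$ and its inverse on the *closed* exterior domains $\K^c$ and $\overline{B}(0,1)^c$. The subtlety, as the authors flag, is that these domains are not convex, so one cannot simply integrate $\Tc'$ along the segment $[x,y]$. I would split the argument into a local part (points close to the boundary) and a global part (points far apart or far from the boundary), and handle the local part by working in a boundary-fitted chart where the domain is genuinely nice.

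First I would record what regularity of $\Tc$ up to the boundary we get for free. Since $\partial\K$ is a $\C^{1,\alpha}$ Jordan curve, the Kellogg--Warschawski theorem gives that $\Tc$ extends to a homeomorphism $\overline{\K^c}\to\overline{B(0,1)^c}$ (including $\infty$) with $\Tc'$ extending continuously and Hölder-continuously up to $\partial\K$, and $\Tc'$ nonvanishing there; the same for $\Tc^{-1}$ on $\overline{B(0,1)^c}$. Combined with the Laurent expansion \eqref{T expansion} (so $\Tc'$ is bounded near $\infty$ as well, $\Tc'(z)=\beta+\O(1/|z|^2)$), this gives $\|\Tc'\|_{\sL^\infty(\K^c)}\le C$ and $\|(\Tc^{-1})'\|_{\sL^\infty(\overline B(0,1)^c)}\le C$, with $\Tc'$ bounded away from $0$ as well. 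That is the easy ingredient; the issue is purely that $\K^c$ may force a straight segment between $x$ and $y$ to pass through $\K$.

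Next, the geometric reduction. I claim that $\K^c$ is what is sometimes called a \emph{quasiconvex} (or uniform) domain: there is a constant $L$ such that any two points $x,y\in\K^c$ can be joined by a rectifiable path $\gamma\subset\K^c$ with $\mathrm{length}(\gamma)\le L|x-y|$. For the complement of a compact set with $\C^{1,\alpha}$ (in particular $\C^1$) boundary this is standard: away from a fixed neighborhood of $\partial\K$ one uses segments directly, and near $\partial\K$ one uses the fact that a $\C^1$ Jordan curve satisfies an interior/exterior cone (chord--arc) condition, so one can go ``around'' the obstacle along a curve of length comparable to $|x-y|$. Granting quasiconvexity, the bound is immediate: for the first inequality pick such a $\gamma$ from $x$ to $y$ in $\K^c$ and estimate
\[
|\Tc(x)-\Tc(y)|\le \int_\gamma |\Tc'(z)|\,|dz|\le \|\Tc'\|_{\sL^\infty(\K^c)}\,\mathrm{length}(\gamma)\le C L\,|x-y|.
\]
The second inequality is even easier, since $\overline{B}(0,1)^c$ \emph{is} quasiconvex with an explicit constant (go along the arc of the circle of radius $\max(|x|,|y|)$, or just use that $B(0,1)^c$ is starshaped-at-infinity in an appropriate sense), so one repeats the same computation with $(\Tc^{-1})'$ in place of $\Tc'$.

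**Main obstacle.** The only real work is the quasiconvexity of $\K^c$ near $\partial\K$, i.e. producing the short connecting path when $x,y$ are close together and close to the boundary. The clean way is to flatten the boundary: cover $\partial\K$ by finitely many balls in each of which $\partial\K$ is, after rotation, the graph of a $\C^{1,\alpha}$ function and $\K^c$ is the region above the graph; the map $(s,t)\mapsto (s, t+\phi(s))$ straightening the graph is bi-Lipschitz with constants depending only on $\|\phi'\|_\infty$, and a half-space is convex, so one gets a path of controlled length there and patches with segments in the far region. One must check the patching constant is uniform — that is where compactness of $\partial\K$ and the finite subcover are used — but there is no delicate estimate, only bookkeeping. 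An alternative, essentially equivalent, route is to invoke directly that a Jordan domain bounded by a $\C^1$ (indeed Dini-smooth) curve is a John/uniform domain and hence quasiconvex, citing the standard reference; I would probably state the quasiconvexity as a short lemma with the flattening sketch and then conclude as above.
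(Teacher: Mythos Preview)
Your proposal is correct and follows essentially the same route as the paper: bound $\Tc'$ and $(\Tc^{-1})'$ uniformly using Kellogg--Warschawski near the boundary together with the Laurent expansion at infinity, then reduce the Lipschitz estimate to quasiconvexity of the exterior domains and integrate along a short connecting path. The only difference is cosmetic: where you sketch the quasiconvexity of $\K^c$ via local boundary-flattening charts, the paper simply declares it ``rather classical'' for $\C^1$ Jordan curves and points to the literature (Hakobyan--Herron, and Ahlfors' equivalence between quasidisks and quasiconvex complements).
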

\begin{proof}
As long as the boundary is ${\cal C}^{1,\alpha}$, we can extend the definition of $\Tc$ and $D\Tc$ continuously up the boundary due to Kellogg-Warschawski theorem (see \cite[Theo. 3.6]{Pomm}). Hence, by the behavior at infinity (see \eqref{T expansion}), we infer that $D\Tc$ is uniformly bounded on $\K^c$. The same argument gives also that $\Tc^{-1}$ is bounded on $\overline B(0,1)^c$.

By the connectivity of $\K^c$, we know that for any $x,y\in \K^c$, there exists a smooth path $\gamma$ in $\K^c$ joining $x$ and $y$, and we have
\begin{equation*}
|\Tc(x)-\Tc(y) |
=\Big|\int_{0}^1 D\Tc(\gamma(t))\gamma'(t)\d t \Big|
\leq \|D\Tc\|_{\sL^\infty} \ell(\gamma).
\end{equation*}

Therefore, it is sufficient to prove that there exists $a\geq 1$ such that $\K^c$ is {\it a-quasiconvex}, that is, for all points $x,y$ there exists a rectifiable path $\gamma$ joining $x,y$ and satisfying
\[
\ell(\gamma) \leq a |x-y|.
\]

We note easily that $\overline B(0,1)^c$ is $\frac{\pi}2$-quasiconvex which ends the proof for $\Tc^{-1}$.

Concerning $\Tc$, we remark that $\K^c$ cannot be quasiconvex if $\partial \K$ has a double point or a cusp. Conversely, if $\partial \K$ is a $\C^1$ Jordan curve, it is rather classical to show that there exists $a\geq 1$ such that $\K^c$ is $a$-quasiconvex. We refer to Hakobyan and Herron \cite{Hakobyan} for recent development about quasiconvexity. This kind of problem is although extensively study in complex analysis, and Ahlfors shows in \cite{Ahlfors} the following equivalence in dimension two:
\[
\partial \K \text{ is a quasidisk} \Longleftrightarrow \K^c \text{ is quasiconvex}
\]
where it is known that a Jordan curve, piecewise $\C^1$, is a quasidisk iff $\partial \K$ has no cusp (see e.g. \cite{Gustafsson}).
\end{proof}

Next, with the definitions \eqref{domain1}--\eqref{domain2}, we set $\Tca{i,j}$ as
\begin{equation}\label{Tca}
\Tca{i,j}(z) = \Tc\Bigg( \frac{z-\zea{i,j}}{\eps}\Bigg),
\end{equation}
 the unique biholomorphism which maps $(\Kea{i,j})^c$ to $\overline{B}(0,1)^c$ and  
 satisfies  $\Tca{i,j}(\infty)=\infty$ and $(\Tca{i,j})'(\infty)\in \R^+$.
Let us note that
\begin{equation}\label{Tca-1}
(\Tca{i,j})^{-1}(z) = \eps \Tc^{-1}(z) +\zea{i,j}.
\end{equation}
From these formulas and Lemma \ref{lem.TT-1}, we will often use the following Lipschitz estimates:
\begin{equation}\label{Lip}
\| \Tca{i,j} \|_{\mathrm{Lip}} \leq \frac{C}{\varepsilon} \quad \text{ and } \quad \| (\Tca{i,j})^{-1} \|_{\mathrm{Lip}} \leq C \varepsilon.
\end{equation}

Then we infer that there exists a unique vector field ${\mathsf u}^\eps_{i,j}[f]$ solving in $\R^2\setminus \Kea{i,j}$:
\begin{equation*}
\begin{split}
\div {\mathsf u}^\eps_{i,j}[f] =0 , \qquad \curl {\mathsf u}^\eps_{i,j}[f] =f , \qquad \lim_{|x|\to \infty}|{\mathsf u}^\eps_{i,j}[f](x)|=0, \\
 {\mathsf u}^\eps_{i,j}[f]\cdot\bn\vert_{\partial \Kea{i,j}} = 0  , \qquad  \oint_{\partial \Kea{i,j}}{\mathsf u}^\eps_{i,j}[f](s) \cdot \tau\d s =0,
\end{split}
\end{equation*}
which is given explicitly by:
\begin{equation}\label{BS ueps}
\begin{split}
{\mathsf u}^\eps_{i,j}[f](x) =& \frac1{2\pi}\nabla^\perp \int_{(\Kea{i,j})^c} \ln\frac{|\Tca{i,j}(x)-\Tca{i,j}(y)|\ |\Tca{i,j}(x)|}{|\Tca{i,j}(x)-\Tca{i,j}(y)^*|}f(y)\d y\\
=&\frac1{2\pi}\nabla^\perp \int_{(\Kea{i,j})^c} \ln\frac{\eps|\Tca{i,j}(x)-\Tca{i,j}(y)|\ |\Tca{i,j}(x)|}{\beta|\Tca{i,j}(x)-\Tca{i,j}(y)^*|}f(y)\d y.
\end{split}
\end{equation}
For more details and literature on this problem, we refer to  \cite[Sect. 2]{ILL}.\\
A useful estimate for the next section is:
\begin{lemma}\label{lem anneau}
There exist $C_{1},C_{2},C_{3},C_{4}$ four positive  numbers such that for all $\varepsilon>0$, $\alpha>0$, $\mu\in [0,1]$, $i\in\{1,\ldots,\Nx\}$, $j\in\{1,\ldots,\Ny\}$,  $r>0$, 
we have 
\[  \Tca{i,j}\Bigl(\partial B(\zea{i,j},r)\cap (\Kea{i,j})^c\Bigr) \subset B\Bigl(0,C_{1}\frac r\eps\Bigr)\setminus B\Bigl(0,C_{2}\frac r\eps\Bigr)\]
and
\[ (\Tca{i,j})^{-1}\Bigl(\partial B(0,r+1)\Bigr) \subset B\Bigl(\zea{i,j},\eps C_{3}(r+1)\Bigr)\setminus B\bigl(\zea{i,j},\eps C_{4}(r+1)\bigr).\]
\end{lemma}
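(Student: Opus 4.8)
The statement is essentially a quantitative restatement of the uniform Lipschitz bounds \eqref{Lip} together with the boundedness of $\Tc^{-1}$ on $\overline B(0,1)^c$, so I would deduce everything from Lemma~\ref{lem.TT-1} and the scaling relations \eqref{Tca}--\eqref{Tca-1}. The key observation is that $\Tca{i,j}$ is obtained from the fixed map $\Tc$ by the affine change $z\mapsto (z-\zea{i,j})/\eps$, so a sphere $\partial B(\zea{i,j},r)$ is sent by this affine map to $\partial B(0,r/\eps)$, and then $\Tc$ carries $\partial B(0,r/\eps)\cap\K^c$ into an annulus centered at $0$. Thus it suffices to prove the two inclusions once, for the fixed map $\Tc$ and radius $\rho=r/\eps$ (resp. for $\Tc^{-1}$ and radius $r+1$), and then undo the affine rescaling; the constants $C_1,\dots,C_4$ produced this way are manifestly independent of $\eps$, $\alpha$, $\mu$, $i$, $j$.

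\emph{Upper radius bounds.} For the outer inclusion in the first displayed formula, take any $x\in\partial B(\zea{i,j},r)\cap(\Kea{i,j})^c$. Then $w:=(x-\zea{i,j})/\eps\in\K^c$ with $|w|=r/\eps$, and $\Tca{i,j}(x)=\Tc(w)$. Since $0\in\stackrel{\circ}{\K}$ we may fix a point $p\in\partial\K$ (so $\Tc(p)\in\partial B(0,1)$, i.e. $|\Tc(p)|=1$) and write $|\Tc(w)|\le |\Tc(p)|+|\Tc(w)-\Tc(p)|\le 1+C|w-p|\le 1+C(|w|+\mathrm{diam}\,\K)$ by the first inequality of Lemma~\ref{lem.TT-1}. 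Because $\K\subset(-1,1)^2$ is bounded and $|w|=r/\eps$, this is bounded by $C_1 r/\eps$ after absorbing constants (using that the relevant $r/\eps$ can be taken $\ge$ some fixed multiple of $\mathrm{diam}\,\K$; for small $r/\eps$ the point $x$ would have to lie inside $\Kea{i,j}$, contradicting $x\in(\Kea{i,j})^c$, so this regime is vacuous after possibly enlarging $C_1$). Symmetrically, for $(\Tca{i,j})^{-1}(\partial B(0,r+1))$ one uses \eqref{Tca-1}: if $|z|=r+1$ then $(\Tca{i,j})^{-1}(z)=\eps\Tc^{-1}(z)+\zea{i,j}$, and $|\Tc^{-1}(z)-\zea{i,j}/\eps|$ — or rather $|\Tc^{-1}(z)|$ — is estimated by the second Lipschitz inequality of Lemma~\ref{lem.TT-1} against $|\Tc^{-1}(q)|$ for a fixed $q$ with $|q|=1$, giving $|\Tc^{-1}(z)|\le C+C|z-q|\le C_3(r+1)$; multiplying by $\eps$ gives membership in $B(\zea{i,j},\eps C_3(r+1))$.

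\emph{Lower radius bounds.} These are the more delicate half and the main obstacle: one must show that $\Tca{i,j}$ (resp. $(\Tca{i,j})^{-1}$) does not bring far-away points near the boundary. For $\Tc$ this follows by applying the Lipschitz estimate to $\Tc^{-1}$: if $|\Tc(w)|=s$ then $w=\Tc^{-1}(\Tc(w))$ and, with $p\in\partial\K$ as above, $|w-p|=|\Tc^{-1}(\Tc(w))-\Tc^{-1}(\Tc(p))|\le C\big||\Tc(w)|-|\Tc(p)|\big|$ is false in general — instead one writes $|w|\le |p|+|w-p|\le \mathrm{diam}\,\K + C|\Tc(w)-\Tc(p)|\le \mathrm{diam}\,\K + C(|\Tc(w)|+1)$, hence $|\Tc(w)|\ge c|w|-C$ for constants $c,C>0$; for $|w|=r/\eps$ large this yields $|\Tc(w)|\ge C_2 r/\eps$ after shrinking $C_2$, and again the small-$r/\eps$ range is vacuous because such $x$ lies inside $\Kea{i,j}$. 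The same argument applied to $\Tc^{-1}$, using the first Lipschitz inequality of Lemma~\ref{lem.TT-1} to bound $|z|=|\Tc(\Tc^{-1}(z))|\le |\Tc(q')|+C|\Tc^{-1}(z)-q'|$ for fixed $q'\in\partial\K$, gives $|\Tc^{-1}(z)|\ge c|z|-C\ge C_4(r+1)$ for $|z|=r+1\ge 1$, whence $(\Tca{i,j})^{-1}(\partial B(0,r+1))\subset \big(B(\zea{i,j},\eps C_4(r+1))\big)^c$. Collecting the four bounds and absorbing all numerical constants into $C_1,C_2,C_3,C_4$ — uniform in $\eps,\alpha,\mu,i,j$ since they come only from the fixed compact set $\K$ — completes the proof.
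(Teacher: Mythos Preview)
Your reduction via the affine rescaling \eqref{Tca}--\eqref{Tca-1} is exactly what the paper does: both proofs immediately boil the statement down to showing that $\Tc(\partial B(0,\rho)\cap\K^c)$ and $\Tc^{-1}(\partial B(0,\rho))$ lie in annuli of inner and outer radii comparable to $\rho$, with constants depending only on $\K$.

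Where you diverge is in how you prove the annular bounds for the fixed map $\Tc$. The paper argues directly that the function $z\mapsto |\Tc(z)|/|z|$ is continuous on $\overline{\K^c}$, strictly positive (since $|\Tc(z)|\ge 1$ and $0\in\stackrel{\circ}{\K}$ keeps $|z|$ bounded away from $0$), and tends to $\beta>0$ at infinity; hence it is bounded above and below by positive constants. The same one-line argument works for $|\Tc^{-1}(z)|/|z|$. Your route via the Lipschitz estimates of Lemma~\ref{lem.TT-1} produces only \emph{affine} bounds of the form $|\Tc(w)|\le C|w|+C'$ and $|\Tc(w)|\ge c|w|-C'$, and you then need to convert these into linear bounds.

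This conversion is where there is a genuine gap. You dispose of the additive constants by declaring the ``small-$r/\eps$ range vacuous because such $x$ lies inside $\Kea{i,j}$''. But that only excludes $r/\eps<\delta$, where $\delta>0$ is the radius of a ball contained in $\K$; it does \emph{not} exclude the intermediate range $r/\eps\in[\delta,M]$ for the threshold $M$ above which $c|w|-C'\ge (c/2)|w|$. On that range your affine lower bound can be negative and says nothing. The fix is easy: for $|w|\in[\delta,M]$ use the trivial bound $|\Tc(w)|\ge 1$ together with $|w|\le M$ to get $|\Tc(w)|/|w|\ge 1/M$, and then take $C_2=\min(c/2,1/M)$. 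The same remark applies verbatim to your lower bound for $\Tc^{-1}$: the range $|z|=r+1\in[1,M']$ is not vacuous, but is handled by $|\Tc^{-1}(z)|\ge\delta$ (since $\Tc^{-1}(z)\in\K^c$ and $B(0,\delta)\subset\K$). Once patched, your argument is correct; the paper's ratio argument is simply a cleaner way to get both the large and the bounded regimes at once.
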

\begin{proof} With the definitions of $\Kea{i,j}$ and $\Tca{i,j}$ (see \eqref{domain2} and \eqref{Tca})
we have to prove that there exist $C_{1},C_{2},C_{3},C_{4}$ such that for all $\varepsilon$ and $r$ we have:
\[ \Tc\Bigr(\partial B\Bigl(0,\frac r\eps\Bigr)\cap \K^c\Bigr) \subset B\Bigl(0,C_{1}\frac r\eps\Bigr)\setminus B\Bigl(0,C_{2}\frac r\eps\Bigr)\]
and
\[  \Tc^{-1}(\partial B(0,r+1)) \subset B(0,C_{3}(r+1))\setminus B(0,C_{4}(r+1)).\]
The second point is obvious, because $\Tc^{-1}$ is a bijection from $B(0,1)^c$ to $\overline{\K^c}$ and as ${\Tc^{-1}(z)}/{z}\to 1/\beta$ as $|z|\to \infty$, we can infer that $z\mapsto{|\Tc^{-1}(z)|}/{|z|}$ has an  upper and lower positive bounds. Indeed, we have assumed that there is a  small neighborhood of zero inside $\K$.

Actually, the first point is the same. Indeed, we are looking for $C_{1},C_{2}$ such that for all $s>0$ we have 
\[ \Tc(\partial B(0,s)\cap \K^c) \subset B(0,C_{1}s)\setminus B(0,C_{2}s).\]
So,  the conclusion comes with the same argument applied to $z\mapsto{|\Tc(z)|}/{|z|}$ where $\Tc$ is a bijection from $\overline{\K^c}$ to $B(0,1)^c$.
\end{proof}

\subsection{Definition and properties of  $v^\eps[f]$}
 
A similar modification  was introduced in \cite{LLL} in the case of a finite number of balls, whose  centers are fixed and whose  radii tend  to zero. Our case is more difficult because the centers change, the shape of the inclusion is more general than a ball and the number of inclusions tends to infinity. The idea is to define $\ve$ such that it is equal to \eqref{BS ueps} in a neighborhood of $\Kea{i,j}$ and to \eqref{BS plane} far away.

For this, let us define some cut-off functions $\fe{i,j}$.
Let $\varphi \in\C^\infty(\R)$ be a positive non-increasing function such that 
$$\varphi (s)=\left\{\begin{array}{rl}
1&\mbox{ if }s\leq1/2,\\
0&\mbox{ if }s\geq 1.
\end{array}\right.$$
We define the cut-off function $\fe{i,j}$ on $\Pie$ by
$$\fe{i,j}(x)=\varphi\left(\frac{1}{\epsa}\left(\|x-\zea{i,j}\|_{\infty}-\eps\right)\right).$$
This function is ${\cal C}^\infty$ almost everywhere and satisfies
\begin{equation*}
0\leq \fe{i,j} \leq 1,\quad \fe{i,j}(x)=\left\{\begin{array}{rcl}
1&\mbox{if}& \|x- \zea{i,j} \|_{\infty} \leq \eps + \frac{\epsa}2,\\
0&\mbox{if}& \|x- \zea{i,j} \|_{\infty} \geq \eps + {\epsa},
\end{array}\right.\end{equation*}
and we recall that $\Kea{i,j} \subset  \{ x\in\R^2,\  \|x- \zea{i,j} \|_{\infty} \leq \eps \}$.
From the definition, we note that
\begin{equation}\label{eq.nablafej}
\|\nabla\fe{i,j}\|_{\sL^\infty(\Pie)}\leq\frac{C}{\epsa},
\end{equation}
\begin{equation}\label{support nablafej}
{\rm meas}({\rm supp}\ \nabla \fe{i,j})
=4(\varepsilon+\epsa)^2-4\Bigl(\varepsilon+\tfrac\epsa2\Bigl)^2
=4\eps^{\alpha+1}+3\eps^{2\alpha}
\leq 4\Bigl(\varepsilon^\alpha(\varepsilon+\varepsilon^\alpha)\Bigl).
\end{equation}
Concerning the support of $\fe{i,j}$ we have 
\begin{equation*}
{\rm meas}({\rm supp}\  \fe{i,j})=4(\varepsilon+\epsa)^2-\varepsilon^2 {\rm meas}(\K),
\end{equation*}
so ${\rm meas}({\rm supp}\  \fe{i,j}) = \O\Bigl(\varepsilon^\alpha(\varepsilon+\varepsilon^\alpha)\Bigl)$ if $\K=[-1,1]^2$, and ${\rm meas}({\rm supp}\  \fe{i,j}) = \O\Bigl(\varepsilon^{2\alpha} + \varepsilon^2 \Bigl)$ if not.
In any case, we have
\begin{equation}\label{support fei}
{\rm meas}({\rm supp}\  \fe{i,j})\leq 4(\varepsilon+\epsa)^2 \leq 8(\varepsilon^2+\eps^{2\alpha}).
\end{equation}
Moreover, we note easily that all the supports are disjoints, i.e. for all $\alpha>0$, $\mu\in(0,1]$, $(i,k)\in\{1,\ldots,\Nx\}^2$ and  $(j,l)\in\{1,\ldots,\Ny\}^2$, we have  
\begin{equation}\label{disjoint supp}
\fe{i,j} \fe{k,l}\equiv 0\quad \mbox{ iff }\quad(i,j)\neq(k,l).
\end{equation}

\bigskip

Now, we can simply define our correction as:
\begin{equation}\label{we formula}
\ve[f]:=\nabla^\perp\psi^\varepsilon[f],
\end{equation}
with
\begin{eqnarray*}
\psi^\varepsilon [f](x)&:=& \frac1{2\pi}\left(1-\sum_{j=1}^{\Ny}\sum_{i=1}^{\Nx}\fe{i,j}(x)\right) \int_{\Pie} \ln|x-y|\ f(y)\d y\\
&&+\frac{1}{2\pi}\sum_{j=1}^{\Ny}\sum_{i=1}^{\Nx} \fe{i,j}(x)\int_{\Pie}{\ln}\frac{\eps|\Tca{i,j}(x)-\Tca{i,j}(y)| |\Tca{i,j}(x)|}{\beta|\Tca{i,j}(x)-\Tca{i,j}(y)^*|}\ f(y)\d y\\
&=&\frac1{2\pi} \int_{\Pie} \ln|x-y|\ f(y)\d y\\
&&- \frac1{2\pi}\sum_{j=1}^{\Ny}\sum_{i=1}^{\Nx}\fe{i,j}(x) \int_{\Pie} {\ln}\frac{\beta |x-y|}{\eps|\Tca{i,j}(x)-\Tca{i,j}(y)| }     \ f(y)\d y\\
&&+\frac{1}{2\pi}\sum_{j=1}^{\Ny}\sum_{i=1}^{\Nx} \fe{i,j}(x)\int_{\Pie}{\ln}\frac{ |\Tca{i,j}(x)|}{|\Tca{i,j}(x)-\Tca{i,j}(y)^*|}\ f(y)\d y.
\end{eqnarray*}

From this definition and the previous subsection, it appears obvious that:
\begin{equation}\label{eq.ve}
\begin{split}
\div \ve[f] =0 \text{ in }\Pie,\qquad  \lim_{|x|\to \infty}|\ve[f](x)|=0,\qquad \\
\ve[f] \cdot\bn\vert_{\partial \Pie} = 0, \qquad  \oint_{\partial \Kea{i,j}}\ve[f](s) \cdot \tau\d s =0, \quad \forall i,j.
\end{split}
\end{equation}
We can also note that the curl of $\ve[f]$ is equal to $f$ in $\Pie$ plus some terms localized on the support of $\nabla \fe{i,j}$. In this article, we do not need to estimate precisely this quantity, so we do not write its expression.

\section{Convergence at fixed time} \label{sect fixed time}

As we have said in the introduction, we want to decompose $\ueps-u$ as in \eqref{decomp} and to pass to limit in each terms. In this section, we fixed the time, i.e. we consider $f$ as a function in $\sL^{\infty}_{c}(\R^2)$. Then, we introduce $\ueps[f]$ such that:
\begin{equation}\label{eq.ue}
\begin{split}
\div \ueps[f] =0 \text{ in }\Pie, \qquad \curl \ueps[f] =f \text{ in }\Pie, \qquad \lim_{|x|\to \infty}|\ueps[f](x)|=0, \\
\ueps[f]\cdot\bn\vert_{\partial \Pie} = 0  , \qquad  \oint_{\partial \Kea{i,j}}\ueps[f](s) \cdot \tau\d s =0,\quad\forall i,j,\qquad
\end{split}
\end{equation}
and $\ve[f]$ the correction of $\Kr{f 1_{\Pie}}$, i.e. $\ve[f]$ given by \eqref{we formula}.

Let $M_{0}>0$ be fixed, the goal here is to prove the convergence of 
$$\we[f]:=\Kr{f1_{\Pie}}-\ve[f] \quad\mbox{ and }\quad \re[f]:=\ueps[f]-\ve[f]$$ 
to zero uniformly in $f$ verifying
\[ \| f \|_{\sL^1\cap \sL^\infty(\R^2)} \leq M_{0},\]
where we have extended by zero $\ve[f]$ and $\ueps[f]$ inside the inclusions.

\subsection{Convergence of $\we[f]=\Kr{f1_{\Pie}}-\ve[f]$}

First, in the inclusions, we prove that
\begin{proposition}\label{prop 0}
 For all $p\in [1,\infty)$,
\begin{equation}\label{eq.loin}
\|\we[f]\|_{\sL^p(\dominf{\Pie})}\to 0\quad \mbox{ as }\eps\to 0,\quad 
\forall (\alpha,\mu)\in (0,\infty)\times [0,1)\cup (0,1)\times \{1\},
\end{equation}
uniformly in $f$ verifying
\[ \| f \|_{\sL^1\cap \sL^\infty(\R^2)} \leq M_{0}.\] 
\end{proposition}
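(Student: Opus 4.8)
The plan is to exploit the zero-extension convention: inside every inclusion $\Kea{i,j}$ we have set $\ve[f]\equiv 0$, so on the perforated set $\dominf{\Pie}=\bigcup_{i,j}\Kea{i,j}$ the correction reduces to $\we[f]=\Kr{f1_{\Pie}}$, the full-plane Biot--Savart field of the vorticity truncated to $\Pie$. Thus \eqref{eq.loin} amounts to showing that $\|\Kr{f1_{\Pie}}\|_{\sL^p}$, measured only over the union of the holes, tends to $0$ uniformly in $f$ with $\|f\|_{\sL^1\cap\sL^\infty(\R^2)}\le M_{0}$. The strategy is simply to combine a uniform sup-norm bound on $\Kr{f1_{\Pie}}$ with the fact that the holes have vanishing total measure.

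First I would invoke \eqref{est biot}: since truncating $f$ to $\Pie$ does not increase its $\sL^1$ or $\sL^\infty$ norm,
\[
\|\Kr{f1_{\Pie}}\|_{\sL^\infty(\R^2)}\le C\,\|f1_{\Pie}\|_{\sL^1(\R^2)}^{1/2}\,\|f1_{\Pie}\|_{\sL^\infty(\R^2)}^{1/2}\le C\,M_{0}.
\]
Next I would estimate the measure of the perforated region. Each $\Kea{i,j}$ has measure $\eps^2\,{\rm meas}(\K)$, and by \eqref{eq.nbincl} there are at most $(\eps+\epsa)^{-(1+\mu)}$ of them; using $\eps+\epsa\ge\eps^{\min(1,\alpha)}$ for $\eps\in(0,1)$ gives
\[
{\rm meas}\Bigl(\bigcup_{i=1}^{\Nx}\bigcup_{j=1}^{\Ny}\Kea{i,j}\Bigr)\le\frac{\eps^2\,{\rm meas}(\K)}{(\eps+\epsa)^{1+\mu}}\le {\rm meas}(\K)\,\eps^{\,2-\min(1,\alpha)(1+\mu)}.
\]
The exponent $\delta:=2-\min(1,\alpha)(1+\mu)$ is strictly positive exactly in the parameter range of the statement: if $\mu\in[0,1)$ then $\min(1,\alpha)(1+\mu)\le 1+\mu<2$ for any $\alpha>0$, whereas if $\mu=1$ then $\min(1,\alpha)(1+\mu)=2\alpha<2$ because $\alpha<1$.

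Combining the two bounds,
\[
\|\we[f]\|_{\sL^p(\dominf{\Pie})}^p=\|\Kr{f1_{\Pie}}\|_{\sL^p(\bigcup_{i,j}\Kea{i,j})}^p\le \|\Kr{f1_{\Pie}}\|_{\sL^\infty(\R^2)}^p\,{\rm meas}\Bigl(\bigcup_{i,j}\Kea{i,j}\Bigr)\le (C M_{0})^p\,{\rm meas}(\K)\,\eps^{\delta},
\]
which goes to $0$ as $\eps\to0$ at a rate depending only on $M_{0},p,\alpha,\mu,\K$ — in particular uniformly in $f$ — proving the proposition. I do not expect a genuine obstacle here: this is the easy half of the fixed-time analysis, and the only points needing care are the bookkeeping of the threshold $\min(1,\alpha)(1+\mu)<2$ across the two regimes $\mu<1$ and $\mu=1$, and the observation that the zero extension makes $\we[f]$ coincide with the full-plane kernel on the holes. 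The substantive estimate — convergence of $\we[f]$ \emph{away} from the inclusions, where the cut-offs $\fe{i,j}$ and the conformal corrections $\Tca{i,j}$ genuinely interact and the critical value $\alpha_c(\mu)=2-\mu$ enters — is handled in the propositions that follow, not in this one.
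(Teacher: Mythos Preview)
Your proof is correct and follows essentially the same approach as the paper: both reduce $\we[f]$ to $\Kr{f1_{\Pie}}$ on the holes via the zero-extension convention, invoke the uniform $\sL^\infty$ bound \eqref{est biot}, and then show that ${\rm meas}(\dominf{\Pie})\le {\rm meas}(\K)\,\eps^2/(\eps+\epsa)^{1+\mu}\to 0$ in the stated parameter range. Your use of $\eps+\epsa\ge\eps^{\min(1,\alpha)}$ to make the exponent $\delta=2-\min(1,\alpha)(1+\mu)$ explicit is a slight refinement of the paper's case-by-case remark, but the argument is the same.
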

\begin{proof}
Indeed, we have  $\Kr{f1_{\Pie}}-\ve[f]=\Kr{f1_{\Pie}}$ on $\dominf{\Pie}$ and by \eqref{est biot} we write that
\begin{eqnarray*}
\|\Kr{f1_{\Pie}}\|_{\sL^p(\dominf{\Pie})} &\leq& \|\Kr{f1_{\Pie}}\|_{\sL^\infty(\R^2)}({\rm meas}(\dominf{\Pie}))^{1/p}\\
&\leq& CM_{0}({\rm meas}(\dominf{\Pie}))^{1/p}.
\end{eqnarray*}
Using~\eqref{eq.nbincl}, we have
\[{\rm meas}(\dominf{\Pie})\leq (\Nea)^{1+\mu}\eps^2{\rm meas} (\K)\leq {\rm meas} (\K) \frac{\eps^2}{(\eps+\epsa)^{1+\mu}}, 
\]
which tends to zero when $\eps\to 0$ for any $\alpha$ if $\mu<1$, and only for $\alpha<1$ if $\mu=1$. Its ends the proof of \eqref{eq.loin}.
\end{proof}

Now, we are working in $\Pie$: using the explicit formula \eqref{we formula}, we decompose as follows
\begin{equation} \label{decompo we}
\we[f] =\sum_{k=1}^4\wek{k}[f],
\end{equation}
with
\begin{equation*}\begin{split}
\wek{1}[f](x)=&\frac1{2\pi} \sum_{j=1}^{\Ny}\sum_{i=1}^{\Nx}\gd \fe{i,j}(x) \int_{\Pie} \ln \frac{\beta|x-y|}{\eps|\Tca{i,j}(x)-\Tca{i,j}(y)|}f(y)\d y, \\
\wek{2}[f](x)=&\frac1{2\pi} \sum_{j=1}^{\Ny}\sum_{i=1}^{\Nx}\gd \fe{i,j}(x) \int_{\Pie} \ln \frac{|\Tca{i,j}(x)-\Tca{i,j}(y)^*|}{|\Tca{i,j}(x)|}f(y)\d y, \\
\wek{3}[f](x)=&\frac1{2\pi} \sum_{j=1}^{\Ny}\sum_{i=1}^{\Nx}\fe{i,j}(x) \int_{\Pie} \Biggl(\frac{(x-y)^\perp}{|x-y|^2}- (D\Tca{i,j})^T(x)\frac{(\Tca{i,j}(x)-\Tca{i,j}(y))^\perp}{|\Tca{i,j}(x)-\Tca{i,j}(y)|^2} \Biggl) f(y)\d y,    \\
\wek{4}[f](x)=&\frac1{2\pi} \sum_{j=1}^{\Ny}\sum_{i=1}^{\Nx}\fe{i,j}(x) (D\Tca{i,j})^T(x) \int_{\Pie} \Biggl(\frac{\Tca{i,j}(x)-\Tca{i,j}(y)^*}{|\Tca{i,j}(x)-\Tca{i,j}(y)^*|^2}- \frac{\Tca{i,j}(x)}{|\Tca{i,j}(x)|^2}\Biggl)^\perp f(y)\d y.
\end{split}\end{equation*}
We prove separately the convergence to $0$  of  each term in $\sL^2$.\\

Let us start by partially dealing with $\wek3$ and $\wek{4}$. Actually, it is very easy if $\mu<1$, without any condition on $\alpha$:
\begin{proposition}\label{prop k}
Let $\mu\in [0,1)$ and $\alpha>0$ be fixed. Then, for $k=3,4$ and any $p\in [1,\infty)$, we have
\begin{equation*}
\|\wek{k}[f]\|_{\sL^p(\Pie)}\to 0\quad \mbox{ as }\eps\to 0,
\end{equation*}
uniformly in $f$ verifying
\[ \| f \|_{\sL^1\cap \sL^\infty(\R^2)} \leq M_{0}.\] 
\end{proposition}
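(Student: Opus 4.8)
The plan is to estimate the $\sL^p(\Pie)$ norm of $\wek{k}[f]$ for $k=3,4$ by a brute-force bound on each summand, taking advantage of the fact that the number of inclusions $n_1 n_2 \le (\Nea)^{1+\mu}$ and the measure of each cut-off support $\supp \fe{i,j}$ are both controlled, and that for $\mu<1$ the relevant powers of $\eps$ cancel favourably \emph{regardless of $\alpha$}. First I would fix $i,j$ and look at the integrand of $\wek{3}$ and $\wek{4}$: the point is that on $\supp \fe{i,j}$ the Jacobian $D\Tca{i,j}$ is bounded (uniformly in $\eps$, $i$, $j$) only after rescaling, so using \eqref{Lip} we have $\|D\Tca{i,j}\|_{\sL^\infty} \le C/\eps$, and the two differences of Cauchy-type kernels combine so that the $1/\eps$ is compensated: in $\wek 3$ the kernel difference $\frac{(x-y)^\perp}{|x-y|^2} - (D\Tca{i,j})^T(x)\frac{(\Tca{i,j}(x)-\Tca{i,j}(y))^\perp}{|\Tca{i,j}(x)-\Tca{i,j}(y)|^2}$ is, by the mean-value inequality of Lemma~\ref{lem.TT-1} applied to $\Tc$ (rescaled) together with the Laurent expansion \eqref{T expansion} controlling the second derivatives, bounded by $C/|x-y|$ up to lower-order terms; similarly in $\wek 4$ the factor $(D\Tca{i,j})^T(x)$ carrying a $1/\eps$ is multiplied by a difference of the "reflected'' kernels $\frac{\Tca{i,j}(x)-\Tca{i,j}(y)^*}{|\Tca{i,j}(x)-\Tca{i,j}(y)^*|^2}-\frac{\Tca{i,j}(x)}{|\Tca{i,j}(x)|^2}$, which is bounded because $y^* = y/|y|^2$ lives inside $B(0,1)$ while $\Tca{i,j}(x)$ lives outside, so the denominators are bounded below and the numerator difference is $\O(\eps)$ by \eqref{Lip} again (both $\Tca{i,j}(y)^*$ and $0$ are images, under $z\mapsto z^*$ composed with $\Tca{i,j}$, of points at distance $\le C\eps$).

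Next I would carry out the integration in $y$. In both cases the $\sL^\infty_x$ norm of $\int_{\Pie} \big(\text{kernel}\big) f(y)\d y$ over $\supp\fe{i,j}$ is bounded by $C\|f\|_{\sL^1\cap\sL^\infty}$ by the same Calderón–Zygmund / Young-type estimate that gives \eqref{est biot} (the weakly singular kernel $1/|x-y|$ against an $\sL^1\cap\sL^\infty$ function), uniformly in $i,j$ and $\eps$. Hence $|\wek k[f](x)| \le C M_0 \sum_{i,j} \fe{i,j}(x)$ pointwise, and since the supports of the $\fe{i,j}$ are disjoint by \eqref{disjoint supp}, taking the $\sL^p$ norm gives
\[
\|\wek k[f]\|_{\sL^p(\Pie)}^p \le C M_0^p \sum_{j=1}^{\Ny}\sum_{i=1}^{\Nx} \meas(\supp \fe{i,j}) \le C M_0^p\, n_1 n_2 \big(\eps^2 + \eps^{2\alpha}\big),
\]
using \eqref{support fei}. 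Then I invoke \eqref{eq.nbincl}: $n_1 n_2 \le (\eps+\epsa)^{-(1+\mu)}$, so the bound is $C M_0^p (\eps^2+\eps^{2\alpha})(\eps+\epsa)^{-(1+\mu)}$. For $\mu<1$ this quantity tends to $0$ as $\eps\to 0$ for \emph{every} $\alpha>0$: if $\alpha\ge 1$ then $\eps+\epsa \sim \eps$ and the ratio is $\sim \eps^{2-(1+\mu)} = \eps^{1-\mu}\to 0$; if $\alpha<1$ then $\eps+\epsa\sim\epsa$, the worst term is $\eps^{2\alpha}/\eps^{\alpha(1+\mu)} = \eps^{\alpha(1-\mu)}\to 0$ (and the $\eps^2/\epsa^{1+\mu}$ term is even smaller since $2 > \alpha(1+\mu)$ for $\alpha<1\le 2/(1+\mu)$). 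Taking $p$-th roots finishes the proof.

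The main obstacle is the pointwise kernel estimate in the first paragraph: one must show that the $1/\eps$ coming from $\|D\Tca{i,j}\|_{\mathrm{Lip}}$ is genuinely cancelled. For $\wek 3$ this is a mean-value-type argument that needs Lemma~\ref{lem.TT-1} on the non-convex domain $\K^c$ \emph{and} the decay $h'(z)=\O(1/|z|^2)$ from \eqref{T expansion} (so that $D\Tc$ is not merely bounded but its oscillation is controlled, giving the kernel difference the right homogeneity); for $\wek 4$ one needs the geometric separation between $\overline{B}(0,1)^c$ (where $\Tca{i,j}(x)$ lives) and $\overline{B}(0,1)$ (where the starred points live) to keep the denominators away from $0$ after one multiplies back by the $1/\eps$-size Jacobian. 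Once those two uniform-in-$(\eps,i,j)$ kernel bounds are in hand, everything else is the counting argument above, which is the whole reason the condition $\mu<1$ (and no condition on $\alpha$) suffices here; the genuinely delicate balance between $\alpha$ and $\mu$ is deferred to the terms $\wek 1$, $\wek 2$ and to the case $\mu=1$.
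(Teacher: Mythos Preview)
Your overall strategy and the counting argument in the second paragraph are exactly what the paper does, and they are correct. The gap is in the pointwise kernel estimate for $\wek 4$. Your claim that ``the denominators are bounded below'' fails: for $x\in\supp\fe{i,j}$ close to $\partial\Kea{i,j}$ and $y$ close to $x$, both $\Tca{i,j}(x)$ and $\Tca{i,j}(y)^*$ lie near the same point of $\partial B(0,1)$, so $|\Tca{i,j}(x)-\Tca{i,j}(y)^*|$ is not bounded away from zero. Your claim that the ``numerator difference is $\O(\eps)$'' also fails: by the identity \eqref{eq.ab} the relevant quantity is $|\Tca{i,j}(y)^*|=1/|\Tca{i,j}(y)|$, which is $\O(1)$ (not $\O(\eps)$) when $y$ is near the inclusion; and the parenthetical about ``images of points at distance $\le C\eps$'' is simply wrong, since $0$ is the image of $\infty$ under $z\mapsto z^*$. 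So there is no pointwise cancellation of the $1/\eps$ coming from $D\Tca{i,j}$, and your $\wek 4$ bound does not go through.

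The paper sidesteps this by citing \cite[Theorem~4.1]{ILL}, which proves directly that the exterior Biot--Savart field ${\mathsf u}^\eps_{i,j}[f]$ in $(\Kea{i,j})^c$---whose constituent integrals are exactly those appearing in $\wek{3}_{i,j}$ and $\wek{4}_{i,j}$---satisfies $\|{\mathsf u}^\eps_{i,j}[f]\|_{\sL^\infty}\le C\|f\|_{\sL^1}^{1/2}\|f\|_{\sL^\infty}^{1/2}$ uniformly in $\eps$. That argument is not a pointwise kernel estimate: one changes variables $\eta=\Tca{i,j}(y)$ and uses the Cauchy--Riemann structure so that the factor $(D\Tca{i,j})^T(x)$ (of size $1/\eps$) and the Jacobian $|\det D(\Tca{i,j})^{-1}|$ (of size $\eps^2$) combine with the kernel scaling to yield a Biot--Savart-type integral on $B(0,1)^c$ against a density with the same $\sL^1$ and $\sL^\infty$ norms as $f$, to which \eqref{est biot} applies. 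Your $\wek 3$ sketch can be rescued (each of the two terms is separately bounded by $C/|x-y|$ via \eqref{Lip}, hence so is the difference, and then \eqref{est biot} finishes), but $\wek 4$ genuinely needs the change-of-variables route rather than a pointwise bound.
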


\begin{proof}
Changing variables and using the expression \eqref{Tca} of $\Tca{i,j}$ in terms of $\Tc$, we can get that the quantities
\begin{equation*}\begin{split}
\wek{3}_{i,j}(x):=& \int_{\Pie} \frac{(x-y)^\perp}{|x-y|^2}f(y)\d y - (D\Tca{i,j})^T(x) \int_{\Pie}  \frac{(\Tca{i,j}(x)-\Tca{i,j}(y))^\perp}{|\Tca{i,j}(x)-\Tca{i,j}(y)|^2} f(y)\d y,\\
\wek{4}_{i,j}(x):=& (D\Tca{i,j})^T(x) \int_{\Pie} \Biggl(\frac{\Tca{i,j}(x)-\Tca{i,j}(y)^*}{|\Tca{i,j}(x)-\Tca{i,j}(y)^*|^2}- \frac{\Tca{i,j}(x)}{|\Tca{i,j}(x)|^2}\Biggl)^\perp f(y)\d y,
\end{split}\end{equation*}
are uniformly bounded by $CM_{0}$ where $C$ depends only on $\K$. Indeed, by the Biot-Savart formula, 
$\int_{\Pie} \frac{(x-y)^\perp}{|x-y|^2}f(y)\d y=2\pi K_{\R^2}[f1_{\Pie}]$ and the uniform estimate comes directly from \eqref{est biot}. Concerning the other term of $w_{3}^{i,j}$ and $w_{4}^{i,j}$, all the details are given in \cite[Theorem 4.1]{ILL}.

Hence, as the  $\fe{i,j}$ have  disjoint supports 
  (see \eqref{disjoint supp}), we state that the uniform bound and \eqref{support fei} imply that for any $p\in [1,\infty)$ and $k=3,4$:
\[
\|\wek{k}[f]\|_{\sL^p(\Pie)} 
\leq \frac{3CM_{0}}{2\pi} \Bigl( (\Nea)^{1+\mu} 4(\varepsilon +\epsa)^2 \Bigl)^{1/p} \leq  C M_{0} (\varepsilon +\epsa)^{\frac{1-\mu}{p}},
\]
which tends to zero for $\mu<1$.
\end{proof}

Notice that Proposition \ref{prop k} holds true for any $p\in [1,\infty)$. When $\mu=1$, the proof is more tricky, and we only establish the convergence in $\sL^2$ for $\wek{3}$ and $\wek{4}$ when $\alpha<1$, as we make for $\wek{1},\wek2$.

The terms $\wek1$ and $\wek3$ will be treated in the same spirit. Indeed, 
we note that if $\K=\overline{B}(0,1)$ then $\Tc={\rm Id}$ (so $\beta=1$ and $h=0$ in \eqref{T expansion} in this case) and we would have $\Tca{i,j}(x)=\frac{x-\zea{i,j}}{\eps}$ hence $\wek{1}\equiv 0$ and $\wek{3}\equiv 0$. In the general case, the idea is then to use that $\Tc$ behaves as $\beta{\rm Id}$ at infinity (see \eqref{T expansion}) that justifies the decomposition of the integrals in two parts (close and far away).

\paragraph{Convergence of $\wek{1}$.}
\begin{proposition}\label{prop 1}
We recall  that $\alpha_c(\mu)=2-\mu$. Let $\mu\in [0,1]$ and $\alpha\in (0,\alpha_{c}(\mu))$ be fixed. Then
\begin{equation*}
\|\wek{1}[f]\|_{\sL^2(\Pie)}\to 0\quad \mbox{ as }\eps\to 0,
\end{equation*}
uniformly in $f$ verifying
\[ \| f \|_{\sL^1\cap \sL^\infty(\R^2)} \leq M_{0}.\] 
\end{proposition}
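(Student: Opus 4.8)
The idea is to estimate $\wek1[f]$ pointwise, then integrate. Since $\wek1[f]$ is supported on $\bigcup_{i,j}\supp\nabla\fe{i,j}$ and those supports are disjoint, it suffices to bound, for $x\in\supp\nabla\fe{i,j}$, the quantity
\[
G_{i,j}(x):=\int_{\Pie}\Bigl|\ln\frac{\beta|x-y|}{\eps|\Tca{i,j}(x)-\Tca{i,j}(y)|}\Bigr|\,|f(y)|\d y,
\]
and then use $\|\wek1[f]\|_{\sL^2(\Pie)}^2\le \frac{1}{4\pi^2}\sum_{i,j}\|\nabla\fe{i,j}\|_\infty^2\int_{\supp\nabla\fe{i,j}}G_{i,j}(x)^2\d x$, together with $\|\nabla\fe{i,j}\|_\infty\le C\eps^{-\alpha}$ (by \eqref{eq.nablafej}), $\mathrm{meas}(\supp\nabla\fe{i,j})\le C\epsa(\eps+\epsa)$ (by \eqref{support nablafej}), and $\Nx\Ny\le(\eps+\epsa)^{-(1+\mu)}$ (by \eqref{eq.nbincl}). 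If I get a uniform bound $G_{i,j}(x)\le C M_0\,g(\eps)$ on the relevant annulus, this yields roughly $\|\wek1[f]\|_{\sL^2(\Pie)}\le C M_0\, g(\eps)\,\eps^{-\alpha}(\epsa(\eps+\epsa))^{1/2}(\eps+\epsa)^{-(1+\mu)/2}$, and I need this to vanish; since $\epsa\gg\eps$ (as $\alpha>0$... wait, one should be careful: for $\alpha>1$, $\epsa\ll\eps$, for $\alpha<1$, $\epsa\gg\eps$) the exponent bookkeeping will produce the condition $\alpha<2-\mu$.

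The heart of the matter is the pointwise bound on the logarithmic factor. Using \eqref{T expansion}, write $\Tca{i,j}(z)=\frac{\beta}{\eps}(z-\zea{i,j})+h\bigl(\frac{z-\zea{i,j}}{\eps}\bigr)$, so that
\[
\frac{\beta|x-y|}{\eps|\Tca{i,j}(x)-\Tca{i,j}(y)|}
=\Bigl|\,1+\frac{h\bigl(\frac{x-\zea{i,j}}{\eps}\bigr)-h\bigl(\frac{y-\zea{i,j}}{\eps}\bigr)}{\frac{\beta}{\eps}(x-y)}\,\Bigr|^{-1}.
\]
On $\supp\nabla\fe{i,j}$ we have $\|x-\zea{i,j}\|_\infty\sim\eps+\epsa$, hence $|\Tca{i,j}(x)|\sim (\eps+\epsa)/\eps$ is large (this is where I use Lemma~\ref{lem anneau}); since $h=\O(1)$ and $h'=\O(1/|z|^2)$ at infinity, the correction term is small and the logarithm is $\O\bigl(\eps/(\eps+\epsa)\bigr)=\o(1)$ — but this good bound only holds when $y$ is also far from $\zea{i,j}$, or more precisely when $\frac{x-\zea{i,j}}{\eps}$ and $\frac{y-\zea{i,j}}{\eps}$ don't allow the holomorphic-increment quotient to blow up. I would split $\int_{\Pie}=\int_{\Pie\cap B(\zea{i,j},\delta)}+\int_{\Pie\setminus B(\zea{i,j},\delta)}$ for a suitable radius $\delta$ (perhaps $\delta$ a fixed small constant, or $\delta\sim(\eps+\epsa)^{1/2}$): on the far part, the above estimate gives a small logarithm times $\|f\|_{\sL^1}\le M_0$; on the near part, the logarithm can be large but is locally integrable (in $y$) against $\|f\|_{\sL^\infty}\le M_0$ — using that $\ln$ of a Lipschitz-in/Lipschitz-out ratio is dominated by $C(|\ln|x-y||+1)$ thanks to \eqref{Lip} and Lemma~\ref{lem.TT-1}, which is $\sL^1_{\loc}$ in $y$ — and the smallness then comes from the small measure $\sim\delta^2$ of the near region.

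The main obstacle I anticipate is precisely making the near/far splitting quantitative enough that both pieces vanish with the claimed sharp rate. The far piece is the one carrying the genuine gain from $\Tc\approx\beta\,\mathrm{Id}$, and controlling the holomorphic increment quotient uniformly requires care because $y$ ranges over all of $\Pie$ (including other inclusions, where $\Tca{i,j}$ is still defined and well-behaved since those points are far from $\zea{i,j}$). The near piece requires the integrability of $\ln$ of the distorted distance, which is where Lemma~\ref{lem.TT-1} (quasiconvexity, so $\Tca{i,j}$ is bi-Lipschitz on $(\Kea{i,j})^c$) is essential: it converts $|\ln|\Tca{i,j}(x)-\Tca{i,j}(y)||$ into something comparable to $|\ln|x-y||+\ln(1/\eps)$, both of which are controllable. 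Balancing $\delta$ to optimize the trade-off between the $\delta^2$ measure gain (near) and the $\eps/(\eps+\epsa)$ logarithm gain (far), against the amplification $\eps^{-\alpha}$ from $\nabla\fe{i,j}$ and the factor $(\eps+\epsa)^{-(1+\mu)/2}$ from the number of inclusions, should reproduce exactly $\alpha_c(\mu)=2-\mu$.
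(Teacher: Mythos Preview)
Your overall scheme is the paper's: bound the integral $G_{i,j}(x)$ pointwise on $\supp\nabla\fe{i,j}$ via a near/far splitting of the $y$-integral, then assemble using \eqref{eq.nablafej}, \eqref{support nablafej}, \eqref{eq.nbincl}. On the far piece you linearize the logarithm via \eqref{T expansion} and \eqref{est ln}, and on the near piece you exploit local integrability. This is exactly what the paper does.

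There is, however, one genuine slip in your splitting. You propose to cut on $|y-\zea{i,j}|\lessgtr\delta$, but $x$ itself sits at distance $\sim\eps+\epsa$ from $\zea{i,j}$; hence the ``far'' region $\{|y-\zea{i,j}|>\delta\}$ may contain points $y$ arbitrarily close to $x$, and then the quotient $\eps\|h\|_{\sL^\infty}/(\beta|x-y|)$ is not small and the linearization \eqref{est ln} fails. Neither of your suggested values works uniformly: $\delta$ fixed gives a near contribution $\sim M_0\delta^2$ that does not vanish, while $\delta\sim(\eps+\epsa)^{1/2}$ gives a near contribution $\sim M_0(\eps+\epsa)$ which, after multiplying by $\eps^{-\alpha/2}(\eps+\epsa)^{-\mu/2}$, does not go to zero when $\mu=1$. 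The paper avoids this by splitting on $|\Tca{i,j}(x)-\Tca{i,j}(y)|\lessgtr\eps^{-1/2}$, which by \eqref{Lip} is equivalent to $|x-y|\lessgtr C\eps^{1/2}$; this is precisely the quantity that must be bounded below for the far linearization, and above for the near piece. With that cut the paper gets $G_{i,j}(x)\le CM_0\,\eps|\ln\eps|$, and the final $\sL^2$ bound is $CM_0|\ln\eps|\,\eps^{(2-\alpha-\mu)/2}$.

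Incidentally, your remark that the bi-Lipschitz bounds \eqref{Lip} control the log-ratio is sharper than you use it: since $C^{-1}\le \eps|\Tca{i,j}(x)-\Tca{i,j}(y)|/|x-y|\le C$, the logarithm is \emph{uniformly bounded} on the near region (not merely $\lesssim|\ln|x-y||+1$). If you split on $|x-y|\lessgtr\eps^{1/2}$ and use this, the near piece gives $\le CM_0\,\eps$ directly, slightly cleaner than the paper's $\eps|\ln\eps|$.
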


\begin{proof}
We fix $i, j$ and work on the support of $\gd \fe{i,j}$. 
For $x\in\supp \gd \fe{i,j}$ fixed, we decompose the integral in two parts: 
\begin{equation}\label{eq.loinpres}
\begin{split}\Pie_{1}:=\{y\in \Pie,\  |\Tca{i,j}(x)-\Tca{i,j}(y)|\leq \eps^{-1/2}  \},\\
\mbox{ and }\Pie_{2}:=\{y\in \Pie,\  |\Tca{i,j}(x)-\Tca{i,j}(y)|> \eps^{-1/2}  \}.
\end{split}
\end{equation}
In the first subdomain $\Pie_{1}$, we set $z=\eps\Tca{i,j}(x)$ and we change variables $\eta=\eps \Tca{i,j}(y)$ and use \eqref{Tca-1}:
\begin{equation*}\begin{split}
\int_{\Pie_{1}} \Bigl| \ln (\eps|\Tca{i,j}(x)-&\Tca{i,j}(y)|) f(y)\Bigl| \d y\\
&\leq \int_{B(z, \eps^{1/2})} \Bigl| \ln |z-\eta| f((\Tca{i,j})^{-1}(\tfrac\eta\eps))\Bigl| \frac{ \bigl| \det D(\Tca{i,j})^{-1}\bigr|(\tfrac\eta\eps)}{\varepsilon^2}\d \eta\\
&\leq \int_{B(z,\eps^{1/2})} \Bigl| \ln |z-\eta| f((\Tca{i,j})^{-1}(\tfrac\eta\eps))\Bigl|  \bigl| \det D\Tc^{-1}\bigr|(\tfrac\eta\eps) \d \eta.
\end{split}\end{equation*}
Using that $D\Tc^{-1}$ and $f$ are bounded functions, we compute that: 
\begin{equation}\label{eq.w11dom11}
\int_{\Pie_{1}} \Bigl| \ln (\eps|\Tca{i,j}(x)-\Tca{i,j}(y)|) f(y)\Bigl| \d y
\leq M_0 C \int_{B(0, \eps^{1/2} )} \Bigl| \ln |\xi| \Bigl|  \d \xi \leq C M_0 \eps|\ln \eps| .
\end{equation}
To deal with $\ln(\beta|x-y|)$, we remark first that if $y\in \Pie_{1}$, then by \eqref{Lip}, we have
\[
|x-y|=|(\Tca{i,j})^{-1}(\Tca{i,j}(x)) -  (\Tca{i,j})^{-1}(\Tca{i,j}(y))| \leq \eps C |\Tca{i,j}(x)- \Tca{i,j}(y)| \leq C \eps^{1/2}.
\]
Then we compute  that
\begin{eqnarray}
\int_{\Pie_{1}} \Bigl| \ln( {\beta|x-y|})f(y) \Bigl|\d y 
&\leq& \int_{B(x, C\eps^{1/2})} \Bigl| \ln (\beta|x-y|) f(y) \Bigl|\d y \nonumber\\
&\leq& \|f\|_{\sL^\infty}  \int_{B(0, C\eps^{1/2})} \Bigl| \ln |\beta \xi|  \Bigl|\d\xi \nonumber\\
&\leq& CM_{0} \eps |\ln \eps|.\label{eq.w11dom12}
\end{eqnarray}

In the second subdomain $\Pie_{2}$, we have by \eqref{Lip}
\[
\eps^{-1/2} \leq |\Tca{i,j}(x) -  \Tca{i,j}(y)| \leq \eps^{-1} C |x-y|,
\]
hence $|x-y|\geq \frac{ \eps^{1/2}}C$. 
Therefore, with $h$ defined in \eqref{T expansion}, writing
\begin{equation}\label{eq.ln2}
\ln \frac{\eps|\Tca{i,j}(x)-\Tca{i,j}(y)|}{\beta|x-y|}
= \ln \frac{\Bigl|\beta(x-y) + \eps \Bigl(h\big(\frac{x-\zea{i,j}}{\eps}\big)- h\big(\frac{y-\zea{i,j}}{\eps}\big)\Bigr)\Bigr|}{\beta|x-y|},
\end{equation}
we have
\[
\frac{\eps \Bigl|  h\big(\frac{x-\zea{i,j}}{\eps}\big)- h\big(\frac{y-\zea{i,j}}{\eps}\big)\Bigl| }{\beta|x-y|} 
\leq \frac{2C\|h\|_{\sL^\infty}}{\beta }\eps^{1/2},
\]
which is smaller that $1/2$ for $\varepsilon$ small enough.
We note easily that
\begin{equation}\label{est ln}
\Bigl|\ln\tfrac{|b+c|}{|b|} \Bigl| \leq 2 \tfrac{|c|}{|b|},\qquad\mbox{ if }\tfrac{|c|}{|b|}\leq \tfrac12.
\end{equation}
Applying this inequality~\eqref{est ln} with $c=h\big(\frac{x-\zea{i,j}}{\eps}\big)- h\big(\frac{y-\zea{i,j}}{\eps}\big)$ and $b= \tfrac{\beta(x-y)}\eps$, we compute from \eqref{eq.ln2}:
\begin{eqnarray*}
\Biggl|\ln \frac{\eps|\Tca{i,j}(x)-\Tca{i,j}(y)|}{\beta|x-y|}\Biggl|
&\leq& 2\frac{\eps|h(\frac{x-\zea{i,j}}{\eps})-h(\frac{y-\zea{i,j}}{\eps})|}{\beta|x-y|} \leq \frac{4\eps \|h\|_{\sL^\infty}}{\beta|x-y|}.
\end{eqnarray*}
Therefore, using \eqref{est biot}, we obtain
\begin{eqnarray*}
\int_{\Pie_{2}} \Bigl|\ln \frac{\beta|x-y|}{\eps|\Tca{i,j}(x)-\Tca{i,j}(y)|} f(y) \Bigl| \d y
&\leq &\int_{ \Pie}\frac{4\eps \|h\|_{\sL^\infty}}{\beta|x-y|} |f(y)|\d y\\
&\leq &C\eps \| f\|_{\sL^\infty}^{1/2} \| f\|_{\sL^1}^{1/2} \\
& \leq& C M_{0}\eps.
\end{eqnarray*}

Putting together this last estimate with previous ones \eqref{eq.w11dom11}--\eqref{eq.w11dom12}, we get 
\[
\Bigl\| \int_{\Pie} \ln \frac{\beta|x-y|}{\eps|\Tca{i,j}(x)-\Tca{i,j}(y)|}f(y)\d y  \Bigl\|_{\sL^\infty(\Pie)} \leq {C}M_{0}\eps |\ln \eps|,
\]
where $ C$ is a constant independent of $i,j,\alpha,\mu,f,\beta,\eps$. Hence, by \eqref{eq.nablafej}, \eqref{support nablafej}, \eqref{disjoint supp} and since $\eps+\eps^\alpha>\eps$, we finally conclude that
\begin{equation*}\begin{split}
\|\wek{1}[f]\|_{\sL^2(\Pie)} &\leq C M_{0} \frac{\eps |\ln \eps|}{\epsa} \Bigl( N_{\varepsilon,\alpha}^{1+\mu}   \eps^{\alpha}(\eps+\epsa) \Bigl)^{1/2}\leq C M_{0} \frac{\eps |\ln \eps|}{\epsa} \Bigl(   \frac{\eps^{\alpha}}{(\eps+\epsa)^{\mu}} \Bigl)^{1/2}\\
&\leq C M_{0} \frac{\eps |\ln \eps|}{\epsa} \Bigl(   \frac{\eps^{\alpha}}{\eps^{\mu}} \Bigl)^{1/2} = C M_{0}  |\ln \eps| \eps^{\frac{2-\alpha-\mu}{2}},
\end{split}\end{equation*}
which converges to zero if $\alpha<2-\mu$, uniformly in $f$ verifying
\[ \| f \|_{\sL^1\cap \sL^\infty(\R^2)} \leq M_{0}.\] 
\end{proof}

\paragraph{Convergence of $\wek3$.}
\begin{proposition}\label{prop 3}
Let $\mu=1$ and $\alpha\in(0,1)$ be fixed. Then
\begin{equation*}
\|\wek{3}[f]\|_{\sL^2(\Pie)}\to 0\quad \mbox{ as }\eps\to 0,
\end{equation*}
uniformly in $f$ verifying
\[ \| f \|_{\sL^1\cap \sL^\infty(\R^2)} \leq M_{0}.\] 
\end{proposition}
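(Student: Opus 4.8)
The plan is to run the near/far splitting of the proof of Proposition~\ref{prop 1}; the new feature is that the piece of $\wek3_{i,j}$ carrying the Jacobian $(D\Tca{i,j})^T$ evaluated close to the inclusion cannot be bounded pointwise and must be estimated in $\sL^2$ instead. Since the cut-offs have pairwise disjoint supports (see \eqref{disjoint supp}), writing $\wek3[f]=\tfrac1{2\pi}\sum_{i,j}\fe{i,j}\,\wek3_{i,j}$ with $\wek3_{i,j}$ as in the proof of Proposition~\ref{prop k}, one has
\[
\|\wek3[f]\|_{\sL^2(\Pie)}^2=\frac1{4\pi^2}\sum_{j=1}^{\Ny}\sum_{i=1}^{\Nx}\int_{\supp\fe{i,j}}\fe{i,j}(x)^2\,|\wek3_{i,j}(x)|^2\d x\le\frac1{4\pi^2}\sum_{j=1}^{\Ny}\sum_{i=1}^{\Nx}\int_{\supp\fe{i,j}}|\wek3_{i,j}(x)|^2\d x .
\]
By \eqref{support fei} and \eqref{eq.nbincl} with $\mu=1$, $\Nx\Ny\cdot{\rm meas}(\supp\fe{i,j})\le C(\eps^2+\eps^{2\alpha})/(\eps+\epsa)^2\le C$ when $\alpha<1$; hence any part of $\wek3_{i,j}$ admitting a uniform bound $o(1)$ on $\supp\fe{i,j}$ contributes $o(1)$ to the sum, while the rest is to be kept in $\sL^2$.

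Next I would fix $i,j$ and $x\in\supp\fe{i,j}$, split the $y$-integral over $\Pie_1,\Pie_2$ as in \eqref{eq.loinpres}, and use $\Tca{i,j}(z)=\tfrac{\beta}{\eps}(z-\zea{i,j})+h\bigl(\tfrac{z-\zea{i,j}}{\eps}\bigr)$ (from \eqref{Tca}, \eqref{T expansion}); setting $w_x=\tfrac{x-\zea{i,j}}{\eps}$, $b=\tfrac{\beta}{\eps}(x-y)$, $c=h(w_x)-h(w_y)$, so that $\Tca{i,j}(x)-\Tca{i,j}(y)=b+c$, $\tfrac{(x-y)^\perp}{|x-y|^2}=\tfrac{\beta}{\eps}\tfrac{b^\perp}{|b|^2}$ and $(D\Tca{i,j})^T(x)=\tfrac{\beta}{\eps}I+\tfrac1\eps(Dh)^T(w_x)$. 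On $\Pie_1$ one has $\Pie_1\subset B(x,C\eps^{1/2})$ exactly as in the proof of Proposition~\ref{prop 1}; bounding the first half of the integrand crudely and, for the second half, changing variables $\xi=\Tca{i,j}(y)$ (with $|\det D\Tca{i,j}|$ bounded above and below by positive multiples of $\eps^{-2}$) and using \eqref{Lip} for the prefactor $(D\Tca{i,j})^T(x)$, the contribution of $\Pie_1$ to $\wek3_{i,j}(x)$ is $\le CM_0\eps^{1/2}$. On $\Pie_2$, $|x-y|\ge\eps^{1/2}/C$ by \eqref{Lip}, hence $|b|\ge\tfrac{\beta}{C}\eps^{-1/2}$, while $|c|\le2\|h\|_{\sL^\infty(\K^c)}$, so $|c|\le|b|/2$ for $\eps$ small. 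Splitting the $\Pie_2$-integrand into a ``$\beta$-part'' $\tfrac\beta\eps\bigl(\tfrac{b^\perp}{|b|^2}-\tfrac{(b+c)^\perp}{|b+c|^2}\bigr)$ and a ``$Dh$-part'' $-\tfrac1\eps(Dh)^T(w_x)\tfrac{(b+c)^\perp}{|b+c|^2}$, the elementary inequality $\Bigl|\tfrac{a}{|a|^2}-\tfrac{a+c}{|a+c|^2}\Bigr|\le C\tfrac{|c|}{|a|^2}$ (valid when $|c|\le|a|/2$) bounds the $\beta$-part by $C\eps\|h\|_{\sL^\infty(\K^c)}/|x-y|^2$, whose integral against $|f|$ over $\Pie_2$ is $\le CM_0|\ln\eps|$ (cut at $|x-y|=1$ and use $\|f\|_{\sL^1\cap\sL^\infty}\le M_0$, cf. \eqref{est biot}). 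So the $\Pie_1$-part and the $\beta$-part of $\wek3_{i,j}$ together are $\le CM_0\eps^{1/2}|\ln\eps|$ uniformly on $\supp\fe{i,j}$, and by the reduction above they contribute $\le CM_0^2\eps\,|\ln\eps|^2\to0$ to $\|\wek3[f]\|_{\sL^2(\Pie)}^2$.

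It then remains to handle the $Dh$-part, which is the core of the argument. On $\Pie_2$ one has $|b+c|\ge|b|/2=\tfrac{\beta}{2\eps}|x-y|$, so this term is $\le\tfrac1\eps|Dh(w_x)|\int_{\Pie_2}\tfrac{2\eps}{\beta|x-y|}|f(y)|\d y\le CM_0|Dh(w_x)|$ by \eqref{est biot}. Summing over $i,j$ and changing variables $w=\tfrac{x-\zea{i,j}}{\eps}$,
\begin{align*}
\sum_{j=1}^{\Ny}\sum_{i=1}^{\Nx}\int_{\supp\fe{i,j}}|Dh(w_x)|^2\d x
&=\eps^2\sum_{j=1}^{\Ny}\sum_{i=1}^{\Nx}\int_{(\supp\fe{i,j}-\zea{i,j})/\eps}|Dh(w)|^2\d w\\
&\le\eps^2\,\Nx\Ny\,\|Dh\|_{\sL^2(\K^c)}^2\le\frac{\eps^2}{(\eps+\epsa)^2}\,\|Dh\|_{\sL^2(\K^c)}^2 ,
\end{align*}
where I used that $(\supp\fe{i,j}-\zea{i,j})/\eps\subset\K^c$ and \eqref{eq.nbincl} with $\mu=1$. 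Here $\|Dh\|_{\sL^2(\K^c)}$ is finite and depends only on $\K$: indeed $|Dh(z)|=\O(1/|z|^2)$ at infinity by \eqref{T expansion}, which is square-integrable at infinity in $\R^2$, and $Dh$ extends boundedly up to $\partial\K$ by Kellogg--Warschawski (as in the proof of Lemma~\ref{lem.TT-1}). Since $\eps^2/(\eps+\epsa)^2\le\eps^{2-2\alpha}\to0$ for $\alpha<1$, the $Dh$-part contributes $\le CM_0^2\eps^{2-2\alpha}\|Dh\|_{\sL^2(\K^c)}^2\to0$. Combining, $\|\wek3[f]\|_{\sL^2(\Pie)}\le CM_0\bigl(\eps^{1/2}|\ln\eps|+\eps^{1-\alpha}\bigr)\to0$, uniformly over $\|f\|_{\sL^1\cap\sL^\infty(\R^2)}\le M_0$.

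The step I expect to be the main obstacle is precisely this last one. For $\mu<1$ the uniform bound $|\wek3_{i,j}|\le CM_0$ borrowed from \cite[Theorem 4.1]{ILL} is enough (Proposition~\ref{prop k}), but when $\mu=1$ the product $\Nx\Ny\cdot{\rm meas}(\supp\fe{i,j})$ is only of order $1$, so that bound loses; the missing decay is recovered only by trading $\sL^\infty$ for $\sL^2$ and exploiting that $Dh\in\sL^2(\K^c)$ — i.e. the quadratic decay $h'(z)=\O(1/|z|^2)$ of the conformal map — which confines the genuinely non-negligible part of $\wek3$ to a neighborhood of the inclusions of asymptotically vanishing relative measure $\O(\eps^{2-2\alpha})$.
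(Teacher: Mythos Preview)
Your proof is correct and follows essentially the same strategy as the paper: the same near/far split \eqref{eq.loinpres}, the same decomposition of the far integrand into a ``$\beta$-part'' (the paper's $J_1$) and a ``$Dh$-part'' (the paper's $J_2$), and the same key observation that only the $Dh$-part requires an $\sL^2$ rather than $\sL^\infty$ estimate on $\supp\fe{i,j}$. The only differences are cosmetic: for $J_1$ the paper uses \eqref{eq.ab} to obtain $|J_1|\le C\eps^{1/2}/|x-y|$ and then applies \eqref{est biot} directly (your stated bound $CM_0|\ln\eps|$ should read $CM_0\eps|\ln\eps|$, a harmless slip), and for the $Dh$-part the paper uses the pointwise decay $|Dh(w_x)|\le C\eps/|x-\zea{i,j}|$ and integrates $|x-\zea{i,j}|^{-2}$ over the annulus, whereas your change of variables and use of $\|Dh\|_{\sL^2(\K^c)}<\infty$ is a clean repackaging of the same estimate that even avoids the spurious $|\ln\eps|^{1/2}$ factor.
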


\begin{proof} 
We fix $i, j$ and work in the support of $\fe{i,j}$. 
For $x\in \supp \fe{i,j}$ fixed, we decompose the integral in the two parts defined in \eqref{eq.loinpres}. Using \eqref{Lip}, we have for $y\in\Pie_{1}$
\begin{equation*}
 |x-y|=|(\Tca{i,j})^{-1}(\Tca{i,j}(x)) -  (\Tca{i,j})^{-1}(\Tca{i,j}(y))|
 \leq \eps C |\Tca{i,j}(x)-\Tca{i,j}(y)| \leq C \eps^{1/2},
\end{equation*}
which implies that $\Pie_{1}\subset B(x,C\eps^{1/2})$.
Then we deduce
\begin{equation*}
 \Big| \int_{\Pie_{1}} \frac{(x-y)^\perp}{|x-y|^2}f(y)\d y \Big| 
 \leq \int_{B(x,C\eps^{1/2})} \frac{|f(y)|}{|x-y|}\d y 
\leq C\eps^{1/2} \|f\|_{\sL^\infty} = CM_{0} \eps^{1/2}.
\end{equation*}
In the same way, we deduce from \cite[Theorem 4.1]{ILL} that
\begin{equation*}
\begin{split}
 \Big| (D\Tca{i,j})^T(x) \int_{\Pie_{1}}   \frac{(\Tca{i,j}(x)-\Tca{i,j}(y))^\perp}{|\Tca{i,j}(x)-\Tca{i,j}(y)|^2} f(y)\d y \Big| 
 & \leq C \| f 1_{B(x,C\eps^{1/2})}\|_{\sL^\infty}^{1/2} \| f 1_{B(x,C\eps^{1/2})} \|_{\sL^1}^{1/2}\\
 &\leq C\eps^{1/2} \|f\|_{\sL^\infty} = CM_{0} \eps^{1/2}.
\end{split}
\end{equation*}

For the second subdomain $\Pie_{2}$, we use the expansion \eqref{T expansion} of $\Tc$ to write:
\begin{equation*}
\begin{split}
\frac{(x-y)^\perp}{|x-y|^2}- &(D\Tca{i,j})^T(x)  \frac{(\Tca{i,j}(x)-\Tca{i,j}(y))^\perp}{|\Tca{i,j}(x)-\Tca{i,j}(y)|^2}\\
=&\frac{(x-y)^\perp}{|x-y|^2}- \frac1{\eps}\Big(\beta {\rm Id} + Dh(\tfrac{x-\zea{i,j}}{\eps}) \Big)^T  
  \frac{\Bigl(\beta \frac{x-y}{\eps} + h\big(\frac{x-\zea{i,j}}{\eps}\big)-h\big(\frac{y-\zea{i,j}}{\eps}\big) \Bigr)^\perp}{\Bigl|\beta \frac{x-y}{\eps} + h\big(\frac{x-\zea{i,j}}{\eps}\big)-h\big(\frac{y-\zea{i,j}}{\eps}\big)\Bigr|^2}\\
=& \frac{(x-y)^\perp}{|x-y|^2} 
- \frac{\Big( x-y +\frac{\eps}{\beta} \Big(h\big(\frac{x-\zea{i,j}}{\eps}\big)-h\big(\frac{y-\zea{i,j}}{\eps}\big)\Big) \Big)^\perp}{\Big|x-y + \frac{\eps}{\beta} \Big(h\big(\frac{x-\zea{i,j}}{\eps}\big)-h\big(\frac{y-\zea{i,j}}{\eps}\big)\Big)\Big|^2}\\
&+\frac1{\beta}Dh\big(\tfrac{x-\zea{i,j}}{\eps}\big)^T \frac{\Big( x-y +\frac{\eps}{\beta} (h(\frac{x-\zea{i,j}}{\eps})-h(\frac{y-\zea{i,j}}{\eps})) \Big)^\perp}{|x-y + \frac{\eps}{\beta} (h(\frac{x-\zea{i,j}}{\eps})-h(\frac{y-\zea{i,j}}{\eps}))|^2}\\
=:& J_{1}(x,y)+J_{2}(x,y).
\end{split}
\end{equation*}
Due to \eqref{Lip}, we have for $y\in\Pie_{2}$
\[
\eps^{-1/2} \leq |\Tca{i,j}(x)-\Tca{i,j}(y)| \leq \frac{C}{\eps} |x-y|
\]
and we can deduce  that $\Pie_{2} \subset B(x, \frac{\eps^{1/2}}C)^c$. Furthermore $\frac{\eps}{\beta} |h(\frac{x-\zea{i,j}}{\eps})-h(\frac{y-\zea{i,j}}{\eps})| \leq C \eps$, then, for $\eps$ small enough, we have
\begin{equation*}
|J_1(x,y)|=  \frac{\Big| \frac{\eps}{\beta} (h(\frac{x-\zea{i,j}}{\eps})-h(\frac{y-\zea{i,j}}{\eps})) \Big|}{|x-y| \Big|x-y + \frac{\eps}{\beta} (h(\frac{x-\zea{i,j}}{\eps})-h(\frac{y-\zea{i,j}}{\eps}))\Big|}
 \leq \frac{C\eps }{|x-y| ( \frac{ \eps^{1/2}}C-C\eps)}\leq\frac{C\eps^{1/2}}{|x-y|},
\end{equation*}
where we have used the relation 
\begin{equation}\label{eq.ab}
\left|\frac{a}{|a|^2}-\frac{b}{|b|^2}\right|=\frac{|a-b|}{|a|\ |b|}.
\end{equation} 
Hence we get by \eqref{est biot}
\[
 \Big| \int_{\Pie_{2}} J_{1}(x,y)f(y)\d y \Big| \leq C\eps^{1/2} \int_{\R^2} \frac{|f(y)|}{|x-y|}\d y \leq C\eps^{1/2} M_{0}.
\]
For $J_{2}$, we know that there exists $C$ such that $|z h'(z)| \leq C$ for any $z$ (see \eqref{T expansion}), so
\[
|J_{2}(x,y)| \leq \frac{1}{\beta}\frac{C}{\frac{|x-\zea{i,j}|}\eps} \frac{1}{|x-y|-C\eps}
\leq \frac{C\eps}{|x-\zea{i,j}|}\frac{1}{|x-y|},
\]
hence
\[
 \Big| \int_{\Pie_{2}}J_{2}(x,y)f(y)\d y \Big| 
 \leq \frac{C\eps}{|x-\zea{i,j}|} \int_{\R^2} \frac{|f(y)|}{|x-y|}\d y \leq \frac{C\eps M_{0}}{|x-\zea{i,j}|}.
\]

Putting together the previous estimates, we finally obtain that
\[
  |\wek3_{i,j}(x) |  \leq 3CM_{0} \eps^{1/2}+\frac{C\eps M_{0}}{|x-\zea{i,j}|}.
\]
The $\sL^2$ norm is easy to estimate for the first right hand side term:
\[
\Big\| \sum_{i,j} \fe{i,j} 3CM_{0} \eps^{1/2} \Big\|_{\sL^2(\Pie)} \leq  3CM_{0} \eps^{1/2} \Bigl( (\Nea)^{2} 4(\varepsilon +\epsa)^2 \Bigl)^{1/2} =  C M_{0} \eps^{1/2},
\]
which tends to zero as $\eps\to 0$. Concerning the second right hand side term, we use that $x$ belongs to the support of $\fe{i,j}$ and that there exists $\delta$ such that $B(0,\delta)\subset \K$, hence $x\in B(\zea{i,j}, \sqrt{2}(\eps+\epsa))\setminus B(\zea{i,j},\delta\eps)$. So we compute
\begin{equation*}
\begin{split}
\Big\| \sum_{i,j} \fe{i,j}(x) \frac{C\eps M_{0}}{|x-\zea{i,j}|} \Big\|_{\sL^2(\Pie)} 
&\leq C\eps M_{0}  \Bigl( \sum_{i,j} \int_{B(\zea{i,j}, \sqrt{2}(\eps+\epsa))\setminus B(\zea{i,j},\delta\eps)}  \frac 1{|x-\zea{i,j}|^{2}} \d x \Bigl)^{1/2}\\
&\leq  C\eps M_{0}\Bigl(  (\Nea)^{2} \ln\frac{\sqrt{2}(\eps+\epsa)}{\delta\eps} \Bigl)^{1/2} \leq C M_{0} |\ln \eps|^{1/2} \eps^{1-\alpha},
\end{split}
\end{equation*}
recalling that $\eps<\epsa$ because we have assumed that $\alpha<1$.
This  ends the estimate of  $\wek{3}_{i,j}$: 
\[
\|\wek{3}[f]\|_{\sL^2(\Pie)}\leq CM_{0}\left(\eps^{1/2}+|\ln\eps|^{1/2}\eps^{1-\alpha}\right).
\]
\end{proof}

\bigskip

The general idea to treat $\wek{2}$ and $\wek{4}$ is the following: if $\K = \overline{B}(0,1)$, then $\Tc={\rm Id}$ and $\Tca{i,j}(x)=\frac{x-\zea{i,j}}{\varepsilon}$, so we note that 
\[
\eps | \Tca{i,j}(x)-   \Tca{i,j}(y)^*| = \Big|x-\zea{i,j} + \eps^2 \tfrac{ y-\zea{i,j}}{ |y-\zea{i,j}|^2}\Big| \sim |x-\zea{i,j}| = \eps | \Tca{i,j}(x)|
\]
at least when $|y-\zea{i,j}|>2\eps$. Hence we will also decompose the domains in two subdomains in order to use this hint.

\paragraph{Convergence of $\wek{2}$.}
\begin{proposition}\label{prop 2}
We recall that  $\alpha_c(\mu)=2-\mu$.
Let $\mu\in [0,1]$ and $\alpha\in (0,\alpha_{c}(\mu))$ be fixed. Then
\begin{equation*}
\|\wek{2}[f]\|_{\sL^2(\Pie)}\to 0\quad \mbox{ as }\eps\to 0,
\end{equation*}
uniformly in $f$ verifying
\[ \| f \|_{\sL^1\cap \sL^\infty(\R^2)} \leq M_{0}.\] 
\end{proposition}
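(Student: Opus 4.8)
The proof will follow the same pattern as for $\wek1$ (Proposition \ref{prop 1}): it suffices to establish the pointwise bound
\[
\Bigl|\int_{\Pie}\ln\frac{|\Tca{i,j}(x)-\Tca{i,j}(y)^*|}{|\Tca{i,j}(x)|}\,f(y)\d y\Bigr|\le C M_{0}\,\eps|\ln\eps|
\]
uniformly in $i,j$ and in $x\in\supp\gd\fe{i,j}$, since then \eqref{eq.nablafej}, \eqref{support nablafej}, \eqref{disjoint supp}, \eqref{eq.nbincl} combine exactly as in the last display of Proposition \ref{prop 1} to give $\|\wek2[f]\|_{\sL^2(\Pie)}\le CM_{0}|\ln\eps|\,\eps^{\frac{2-\alpha-\mu}{2}}$, which vanishes if and only if $\alpha<2-\mu=\alpha_{c}(\mu)$. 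So everything reduces to that pointwise bound. Fix $i,j$ and $x\in\supp\gd\fe{i,j}$ and abbreviate $a:=\Tca{i,j}(x)$; since $x\notin\Kea{i,j}$ we have $|a|\ge1$, and since $\|x-\zea{i,j}\|_\infty\le\eps+\epsa$, Lemma \ref{lem anneau} gives $|a|\le C(1+\eps^{\alpha-1})$, hence $|\ln|a||\le C|\ln\eps|$ for $\eps$ small.

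As suggested by the heuristic preceding the statement, I split $\Pie=\Pie_{1}\cup\Pie_{2}$ with $\Pie_{1}:=\Pie\cap B(\zea{i,j},K\eps)$, where $K$ is a constant depending only on $\K$, chosen large enough that $1/(C_{2}K)\le1/2$ ($C_{2}$ being the constant of Lemma \ref{lem anneau}). On $\Pie_{2}$ one has $|\Tca{i,j}(y)^*|=1/|\Tca{i,j}(y)|\le\eps/(C_{2}|y-\zea{i,j}|)\le1/(C_{2}K)\le\tfrac12\le\tfrac{|a|}2$ by Lemma \ref{lem anneau}, so \eqref{est ln} (with $b=a$, $c=-\Tca{i,j}(y)^*$) yields $\bigl|\ln\tfrac{|a-\Tca{i,j}(y)^*|}{|a|}\bigr|\le 2\eps/(C_{2}|y-\zea{i,j}|)$; integrating against $f$ and using the convolution estimate of \eqref{est biot} evaluated at $\zea{i,j}$ bounds the $\Pie_{2}$ contribution by $CM_{0}\eps$. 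On $\Pie_{1}$ I change variables twice: first $\eta=\Tca{i,j}(y)$, whose Jacobian is $\le C\eps^{2}$ by \eqref{Tca-1} and which by Lemma \ref{lem anneau} sends $\Pie_{1}$ into the fixed annulus $\{1<|\eta|<C_{1}K\}$, and then $\zeta=\eta^{*}$, whose Jacobian $|\zeta|^{-4}$ is bounded on that annulus. This bounds the $\Pie_{1}$ contribution by $CM_{0}\eps^{2}\int_{|\zeta|<1}\bigl|\ln\tfrac{|a-\zeta|}{|a|}\bigr|\d\zeta\le CM_{0}\eps^{2}|\ln\eps|$, using that $\int_{|\zeta|<1}|\ln|a-\zeta||\d\zeta\le C\max(1,|\ln|a||)$ (the singularity at $\zeta=a$ is an integrable logarithm, while $|a-\zeta|\sim|a|$ when $|a|\ge2$) together with $|\ln|a||\le C|\ln\eps|$. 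Adding the two contributions gives the desired pointwise bound.

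Only $\|f\|_{\sL^1\cap\sL^\infty(\R^2)}$ enters the estimates (through \eqref{est biot} on $\Pie_{2}$ and through $\|f\|_{\sL^\infty}$ on $\Pie_{1}$), so the bound is uniform over $f$ with $\|f\|_{\sL^1\cap\sL^\infty(\R^2)}\le M_{0}$, which is all that is required. The one delicate point is the near region $\Pie_{1}$: when $\alpha\ge1$ the point $\Tca{i,j}(x)$ and the points $\Tca{i,j}(y)^{*}$ can all lie close to the unit circle and collide, producing a logarithmic singularity in the integrand; the composition $y\mapsto\eta=\Tca{i,j}(y)\mapsto\zeta=\eta^{*}$ is used precisely because it keeps $\zeta$ inside a fixed annulus bounded away from $0$ and from $\infty$, so that both Jacobians stay under control and the singularity reduces to an ordinary integrable logarithm over a bounded set. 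I expect this to be the main technical obstacle; the far region $\Pie_{2}$ and the reassembling of the $\sL^{2}$ norm are routine, essentially copying the proof of Proposition \ref{prop 1}.
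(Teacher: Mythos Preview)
Your proof is correct and follows essentially the same route as the paper: the same near/far splitting around $\zea{i,j}$, the same use of \eqref{est ln} together with \eqref{est biot} on the far piece, the same small-measure-times-bounded-logarithm estimate on the near piece, and the same final $\sL^2$ assembly as in Proposition~\ref{prop 1}. The only notable variation is on the near region: you compose the change of variables $\eta=\Tca{i,j}(y)$ with the inversion $\zeta=\eta^{*}$ to reduce to an explicit integrable logarithm over $\{|\zeta|<1\}$, whereas the paper bounds $\bigl|\ln\tfrac{|\Tca{i,j}(x)-\Tca{i,j}(y)^*|}{|\Tca{i,j}(x)|}\bigr|$ directly by exploiting the distance lower bound $|x-X|\ge\epsa/2$ for $X\in\partial\Kea{i,j}$ (which holds because $x\in\supp\gd\fe{i,j}$); your variant is slightly cleaner and does not need that lower bound.
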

\begin{proof}
For $x\in \supp \gd \fe{i,j}$, we set $z=\eps \Tca{i,j}(x)$, and changing variables $\eta = \eps\Tca{i,j}(y)$, we deduce from \eqref{Tca-1} that we need to estimate the following quantity:
\begin{equation}\label{w12}
\wek2_{i,j}(z):=\frac1{2\pi}  \int_{B(0,\eps)^c} \ln \frac{|z- \eps^2 \eta^*|}{|z|}f(\eps \Tc^{-1}(\tfrac\eta\eps)+\zea{i,j}) |\det D\Tc^{-1}|(\tfrac\eta\eps) \d \eta.
\end{equation}

From the definition of the cut-off function, we know that $x\in  \supp \gd \fe{i,j}$ implies that
\[
\eps+\tfrac\epsa2 \leq |x-\zea{i,j}| \leq \sqrt{2}(\eps+\epsa),
\]
then by Lemma \ref{lem anneau},  we deduce  that
\[
C_{2} (\eps+\tfrac\epsa2) \leq |z| \leq C_{1}\sqrt{2}(\eps+\epsa).
\]
Therefore, for any $\eta\in B(0,\eps)^c$ we have
\begin{equation*}
\frac{|\eps^2 \eta^*|}{|z|} \leq \frac{\eps^2}{C_{2}(\eps+\frac{\epsa}2)\ |\eta|}.
\end{equation*}
Hence, using \eqref{est ln} with $b=z$ and $c=-\eps^2 \eta^*$, we infer that we have
\begin{equation}\label{eq.majoln}
\left| \ln \frac{|z- \eps^2 \eta^*|}{|z|} \right|
\leq 2 \frac{\eps^2 |\eta^*|}{|z|}
\leq  \frac{2\eps^2}{C_{2}(\eps+\frac{\epsa}2)\ |\eta|}\qquad \mbox{ if }\quad \frac{\eps^2}{C_{2}(\eps+\frac{\epsa}2) |\eta|}\leq \frac12.
\end{equation}

Keeping in mind this inequality, we define $R=2/C_{2}$ and we split the integral \eqref{w12} in two parts: $ B(0,R\eps)^c$ and $B(0,R\eps)\setminus B(0,\eps)$. 

In the first subdomain $B(0,R\eps)^c$, we use that $\eps+\eps^\alpha>\eps$ and 
$$\frac{\eps^2}{C_{2}(\eps+\frac{\epsa}2) |\eta|}\leq \frac{\eps^2}{C_{2}\eps R\eps}= \frac 12,$$ 
hence by \eqref{eq.majoln}, 
we compute
\begin{equation*}\begin{split}
\Bigl|\int_{B(0,R\eps)^c}& \ln \frac{|z- \eps^2 \eta^*|}{|z|}f(\eps \Tc^{-1}(\tfrac\eta\eps)+\zea{i,j}) |\det D\Tc^{-1}|(\tfrac\eta\eps) \d \eta\Bigl|\\
&\leq  \int_{B(0,R\eps)^c}   \frac{2\eps^2}{C_{2}(\eps+\frac{\epsa}2)\ |\eta|} |f(\eps \Tc^{-1}(\tfrac\eta\eps)+\zea{i,j})|\ |\det D\Tc^{-1}|(\tfrac\eta\eps) \d \eta\\
&\leq \frac{2\eps^2}{C_{2}\eps}  \int_{\R^2} \frac{|f(\eps \Tc^{-1}(\tfrac\eta\eps)+\zea{i,j})|\ |\det D\Tc^{-1}|(\tfrac\eta\eps)}{|\eta|} \d \eta\\
&\leq C\eps  \Big\| f(\eps \Tc^{-1}(\tfrac\eta\eps)+\zea{i,j}) \det D\Tc^{-1}(\tfrac\eta\eps) \Big\|_{\sL^\infty}^{1/2}
\Big\| f(\eps \Tc^{-1}(\tfrac\eta\eps)+\zea{i,j}) \det D\Tc^{-1}(\tfrac\eta\eps) \Big\|_{\sL^1}^{1/2}\\
&\leq C\eps \|f\|_{\sL^\infty}^{1/2}\|f\|_{\sL^1}^{1/2}\leq CM_{0}\eps,
\end{split}\end{equation*}
where we have applied \eqref{est biot} for the function $\eta \mapsto  |f(\eps \Tc^{-1}(\tfrac\eta\eps)+\zea{i,j})|\ |\det D\Tc^{-1}|(\tfrac\eta\eps)$ at $x=0$, used that $D \Tc^{-1}$ is bounded and that $\| f(\eps \Tc^{-1}(\tfrac\eta\eps)+\zea{i,j}) \det D\Tc^{-1}(\tfrac\eta\eps) \|_{\sL^1}=\|f\|_{\sL^1}$ by changing variables back.

In the second subdomain $B(0,R\eps)\setminus B(0,\eps)$, we come back to the original variables: by Lemma \ref{lem anneau},  we compute
\begin{equation}\label{eq.w12extRe}\begin{split}
\Bigl|\int_{B(0,R\eps)\setminus B(0,\eps)} \ln &\frac{|z- \eps^2 \eta^*|}{|z|}f(\eps \Tc^{-1}(\tfrac\eta\eps)+\zea{i,j}) |\det D\Tc^{-1}|(\tfrac\eta\eps) \d \eta\Bigl|\\
&\leq \int_{B(0,C_3R\eps)\setminus \Kea{i,j}} \Bigl| \ln \frac{| \Tca{i,j}(x)-   \Tca{i,j}(y)^*|}{| \Tca{i,j}(x)|} \Bigl| |f(y)| \d y.
\end{split}\end{equation}
Now we note that $\Tca{i,j}(y)^*$ belongs to the unit disk whereas $\Tca{i,j}(x)$ is outside, hence
\[ | \Tca{i,j}(x)| -1  \leq | \Tca{i,j}(x)-   \Tca{i,j}(y)^*| \leq | \Tca{i,j}(x)| +1.\]
Let $X$  be the point of $\partial\Kea{i,j}$ such that $| \Tca{i,j}(x)| -1= | \Tca{i,j}(x) -\Tca{i,j}(X)|$. Then, 
since $x\in \supp \gd \fe{i,j}$, we have $|x-X|\geq |x-\zea{i,j}|-|\zea{i,j}-X|\geq \epsa/2$ and then, with \eqref{Lip}
\[
\frac{\epsa}2 \leq |x-X| \leq \eps C   | \Tca{i,j}(x) -\Tca{i,j}(X)|,
\]
hence,
\[\frac{\eps^{\alpha-1}}{2C | \Tca{i,j}(x)|}   \leq \frac{| \Tca{i,j}(x)-   \Tca{i,j}(y)^*|}{ | \Tca{i,j}(x)|} \leq 1+ \frac{1}{ | \Tca{i,j}(x)|}.\]
Moreover, Lemma \ref{lem anneau} yields  that $ \frac{C_{2}(\eps+\epsa)}{\eps} \leq | \Tca{i,j}(x)| \leq\frac{C_{1}\sqrt{2}(\eps+\epsa)}{\eps}$ so
\[ \frac{\eps^{\alpha}}{2C C_{1}\sqrt{2}(\eps+\epsa)}   \leq \frac{| \Tca{i,j}(x)-   \Tca{i,j}(y)^*|}{ | \Tca{i,j}(x)|} \leq 1+ \frac{\eps}{C_{2}(\eps+\epsa)}\leq 1+\frac 1{C_{2}},\]
which implies that
\[
\Bigg| \ln \frac{| \Tca{i,j}(x)-   \Tca{i,j}(y)^*|}{ | \Tca{i,j}(x)|} \Bigg | \leq C \Bigl(1+ \Bigl| \ln\frac{\eps^{\alpha}}{\eps+\epsa}\Bigr|\Bigr)\leq  C (1+|\ln \eps|).
\]
Therefore, using \eqref{eq.w12extRe},
\begin{equation*}\begin{split}
\Bigl|\int_{B(0,R\eps)\setminus B(0,\eps)} \ln \frac{|z- \eps^2 \eta^*|}{|z|}&f(\eps \Tc^{-1}(\tfrac\eta\eps)) |\det D\Tc^{-1}|(\tfrac\eta\eps) \d \eta\Bigl|\\
&\leq  C (1+|\ln \eps|) \| f \|_{\sL^\infty} \pi (C_{3}R\eps)^2.
\end{split}\end{equation*}

Putting  together the estimates in the  two subdomains we get that $\wek{2}_{i,j}(z)$ is bounded by $C\eps M_0$ uniformly for $x\in \gd \fe{i,j}$. Then we conclude as for $\wek{1}$:
\begin{equation*}\begin{split}
\|\wek{2}[f]\|_{\sL^2(\Pie)} &\leq C M_{0} \frac{\eps }{\epsa} \Bigl( N_{\varepsilon,\alpha}^{1+\mu}   \eps^{\alpha}(\eps+\epsa) \Bigl)^{1/2}\leq C M_{0}  \eps^{\frac{2-\alpha-\mu}{2}},
\end{split}\end{equation*}
which converges to zero if $\alpha<2-\mu$, uniformly in $f$ verifying
\[ \| f \|_{\sL^1\cap \sL^\infty(\R^2)} \leq M_{0}.\] 
\end{proof}

\paragraph{Convergence of $\wek{4}$.}

\begin{proposition}\label{prop 4}
Let $\mu=1$ and $\alpha\in(0,1)$ be fixed. Then
\begin{equation*}
\|\wek{4}[f]\|_{\sL^2(\Pie)}\to 0\quad \mbox{ as }\eps\to 0,
\end{equation*}
uniformly in $f$ verifying
\[ \| f \|_{\sL^1\cap \sL^\infty(\R^2)} \leq M_{0}.\] 
\end{proposition}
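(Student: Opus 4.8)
The plan is to run the same argument as for $\wek{2}$ in Proposition~\ref{prop 2}, replacing the thin shell $\supp\gd\fe{i,j}$ by the full cut-off support and the weight $\gd\fe{i,j}$ (of size $\eps^{-\alpha}$) by the smooth matrix weight $\fe{i,j}(x)(D\Tca{i,j})^{T}(x)$, which is of size $\eps^{-1}$ by \eqref{Lip}. Fix $i,j$ and work with $\wek{4}_{i,j}$ as introduced in the proof of Proposition~\ref{prop k}, for $x\in\supp\fe{i,j}$. Since $B(0,\delta)\subset\K$ for some $\delta>0$ (as used for $\wek{3}$), we have $\delta\eps\le|x-\zea{i,j}|\le\sqrt2(\eps+\eps^\alpha)$, hence $|\Tca{i,j}(x)|\ge1$ and, by Lemma~\ref{lem anneau}, $|\Tca{i,j}(x)|\ge C_{2}|x-\zea{i,j}|/\eps$. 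Writing $a=\Tca{i,j}(x)$ and $b=\Tca{i,j}(y)^{*}$, the identity \eqref{eq.ab} applied to the pair $(a-b,\,a)$ yields
\[
\Biggl|\frac{\Tca{i,j}(x)-\Tca{i,j}(y)^{*}}{|\Tca{i,j}(x)-\Tca{i,j}(y)^{*}|^{2}}-\frac{\Tca{i,j}(x)}{|\Tca{i,j}(x)|^{2}}\Biggr|=\frac{|b|}{|a-b|\,|a|}=\frac{1}{|\Tca{i,j}(y)|\,|a-b|\,|a|},
\]
so, using $|D\Tca{i,j}(x)|\le C/\eps$, everything reduces to estimating $\frac1\eps\int_{\Pie}\frac{|f(y)|}{|\Tca{i,j}(y)|\,|a-b|\,|a|}\,\d y$.

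Following the hint stated before the proposition, I would split $\Pie=\Pie_{1}\cup\Pie_{2}$ with $\Pie_{1}=\{y\in\Pie:|y-\zea{i,j}|\le R\eps\}$, $\Pie_{2}=\{y\in\Pie:|y-\zea{i,j}|>R\eps\}$ and $R=2/C_{2}$. On $\Pie_{2}$, Lemma~\ref{lem anneau} gives $|\Tca{i,j}(y)|>2$, hence $|b|<1/2$; with $|a|\ge1$ this forces $|a-b|\ge|a|/2$, so on $\Pie_{2}$ the integrand together with the $\eps^{-1}$ prefactor is $\le\frac{C}{\eps|a|^{2}}\frac{|f(y)|}{|\Tca{i,j}(y)|}\le\frac{C}{|a|^{2}}\frac{|f(y)|}{|y-\zea{i,j}|}$; integrating over $\R^{2}$ and applying the $\sL^{\infty}$ Biot--Savart bound \eqref{est biot} at the point $\zea{i,j}$ produces the contribution $\le\frac{CM_{0}}{|a|^{2}}\le\frac{CM_{0}\eps^{2}}{|x-\zea{i,j}|^{2}}$. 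On $\Pie_{1}$ I would bound $\frac{|b|}{|a-b|\,|a|}\le\frac{1}{|a-b|}$ (using $|a|\ge1\ge|b|$), change variables $w=\Tca{i,j}(y)$ with $\d y=\eps^{2}|\det D\Tc^{-1}(w)|\,\d w$, and then invert $\xi=w^{*}$: since $\Tca{i,j}(\Pie_{1})\subset\{1\le|w|\le C_{1}R\}$ by Lemma~\ref{lem anneau}, the range of $\xi$ is a bounded annulus staying away from the origin, so the inversion Jacobian $|\xi|^{-4}$ and $|\det D\Tc^{-1}|$ are bounded there while $\int_{|\xi|\le1}\frac{\d\xi}{|a-\xi|}$ is uniformly bounded for $|a|\ge1$; this gives a contribution $\le CM_{0}\eps$. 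Hence
\[
|\wek{4}_{i,j}(x)|\le CM_{0}\Bigl(\eps+\frac{\eps^{2}}{|x-\zea{i,j}|^{2}}\Bigr),\qquad x\in\supp\fe{i,j}.
\]

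Finally, by \eqref{disjoint supp} the supports of the $\fe{i,j}$ are disjoint, so squaring and summing: the first term contributes $CM_{0}^{2}\eps^{2}\sum_{i,j}{\rm meas}(\supp\fe{i,j})\le CM_{0}^{2}\eps^{2}$ by \eqref{support fei} and \eqref{eq.nbincl}, whereas the second contributes
\[
CM_{0}^{2}\eps^{4}\sum_{i,j}\int_{B(\zea{i,j},\sqrt2(\eps+\eps^\alpha))\setminus B(\zea{i,j},\delta\eps)}\frac{\d x}{|x-\zea{i,j}|^{4}}\le CM_{0}^{2}\eps^{2}\,\Nx\Ny\le CM_{0}^{2}\,\frac{\eps^{2}}{(\eps+\eps^\alpha)^{2}}\le CM_{0}^{2}\eps^{2-2\alpha},
\]
the final inequality using $\alpha<1$. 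Thus $\|\wek{4}[f]\|_{\sL^{2}(\Pie)}\le CM_{0}(\eps+\eps^{1-\alpha})\to0$, uniformly over $\|f\|_{\sL^{1}\cap\sL^{\infty}(\R^{2})}\le M_{0}$, and this is exactly where the restriction $\alpha<1$ (together with $\mu=1$) enters. I expect the close-range piece on $\Pie_{1}$ to be the real obstacle: the kernel there involves the reflected point $\Tca{i,j}(y)^{*}$ inside the unit disk, which becomes singular precisely as $x$ approaches $\partial\Kea{i,j}$; the way out is the double change of variables $y\mapsto\Tca{i,j}(y)\mapsto\Tca{i,j}(y)^{*}$, admissible only because $\Tca{i,j}(\Pie_{1})$ is a bounded annulus around $\overline B(0,1)$, while keeping enough powers of $\eps$ out of both regions so that the sum over the $\Nx\Ny\asymp\eps^{-2\alpha}$ inclusions still vanishes in the limit.
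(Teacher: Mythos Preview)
Your argument is correct and follows the same skeleton as the paper's proof: split the $y$-integral into a near region and a far region, rewrite the kernel via the identity \eqref{eq.ab}, derive a pointwise bound on $\wek{4}_{i,j}(x)$, and sum in $\sL^2$ over the disjoint supports of the $\fe{i,j}$. Two tactical differences are worth recording. For the near region the paper simply invokes \cite[Theorem~4.1]{ILL} to get the $CM_{0}\eps$ bound, whereas your explicit double change of variables $y\mapsto w=\Tca{i,j}(y)\mapsto \xi=w^{*}$ is self-contained and works because the image annulus stays bounded away from $0$. For the far region the paper only uses $|a-b|\ge\tfrac12$, obtaining the pointwise bound $CM_{0}/|\Tca{i,j}(x)|$ and hence a final $\sL^2$ estimate $CM_{0}|\ln\eps|^{1/2}\eps^{1-\alpha}$; your sharper inequality $|a-b|\ge|a|/2$ (valid since $|b|<\tfrac12\le|a|/2$) gains an extra factor of $1/|a|$, yields the pointwise bound $CM_{0}\eps^{2}/|x-\zea{i,j}|^{2}$, and after integrating $|x-\zea{i,j}|^{-4}$ over the annulus (dominated by the inner radius $\delta\eps$) produces $CM_{0}\eps^{1-\alpha}$ with no logarithm. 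So your route is marginally more elementary and marginally sharper, but the structure of the proof is the same.
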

\begin{proof}
The idea is the same as  for $\wek{3}$: we compare $\Tca{i,j}(x)-\Tca{i,j}(y)^*$ with $\Tca{i,j}(x)$.
Let us fix $i,j$ and we work on the support of $ \fe{i,j}$. We decompose the integral in two parts $\{y\in \Pie,\  |\Tca{i,j}(y)|\leq 2  \}$ and $\{y\in \Pie,\  |\Tca{i,j}(y)|> 2  \}$. If $y$ verifies $|\Tca{i,j}(y)|\leq 2$, it implies that there exists $\bar y\in \partial \Kea{i,j}$ such that $|\Tca{i,j}(y)-\Tca{i,j}(\bar y)|\leq 1$ (we recall that $\Tca{i,j}$ maps $(\Kea{i,j})^c$ to $\bar{B}(0,1)^c$). Hence, by \eqref{Lip}
\[
|y-\zea{i,j}|\leq |y-\bar y| + |\bar y -\zea{i,j}|\leq C\eps |\Tca{i,j}(y)-\Tca{i,j}(\bar y)| +\sqrt{2}\eps \leq C \eps,
\]
which allows us to estimate in the first subdomain, using \cite[Theorem 4.1]{ILL}:
\begin{equation*}
\begin{split}
\Bigg| (D\Tca{i,j})^T(x) \int_{\{y\in \Pie,\  |\Tca{i,j}(y)|\leq 2  \}}&   \Biggl(\frac{\Tca{i,j}(x)-\Tca{i,j}(y)^*}{|\Tca{i,j}(x)-\Tca{i,j}(y)^*|^2}- \frac{\Tca{i,j}(x)}{|\Tca{i,j}(x)|^2}\Biggl)^\perp f(y)\d y \Bigg| \\
& \leq C \| f 1_{B(\zea{i,j},C\eps)}\|_{\sL^\infty}^{1/2} \| f 1_{B(\zea{i,j},C\eps)} \|_{\sL^1}^{1/2}\\
 &\leq C\eps \|f\|_{\sL^\infty} = CM_{0} \eps .
\end{split}
\end{equation*}

In the second subdomain, we note that $|\Tca{i,j}(y)|> 2$ implies that
\[
|\Tca{i,j}(x) -\Tca{i,j}(y)^*| \geq |\Tca{i,j}(x)| - \frac{1}{|\Tca{i,j}(y)|}\geq \frac12.
\]
As for $\wek{2}$, we set $z=\eps \Tca{i,j}(x)$, and change variables $\eta = \eps\Tca{i,j}(y)$ to obtain with \eqref{eq.ab}:
\begin{equation*}\begin{split}
\Big| (D&\Tca{i,j})^T(x) \int_{\{y\in \Pie,\  |\Tca{i,j}(y)|> 2  \}}   \Biggl(\frac{\Tca{i,j}(x)-\Tca{i,j}(y)^*}{|\Tca{i,j}(x)-\Tca{i,j}(y)^*|^2}- \frac{\Tca{i,j}(x)}{|\Tca{i,j}(x)|^2}\Biggl)^\perp f(y)\d y \Big| \\
=& \Big| (D\Tc)^T(\tfrac{x-\zea{i,j}}{\eps}) \int_{B(0,2\eps)^c}   \Biggl(\frac{z-\eps^2\eta^*}{|z-\eps^2 \eta^*|^2}- \frac{z}{|z|^2}\Biggl)^\perp f(\eps \Tc^{-1}(\tfrac\eta\eps)+\zea{i,j}) |\det D\Tc^{-1}|(\tfrac\eta\eps) \d \eta \Big| \\
\leq & C  \int_{B(0,2\eps)^c}   \frac{\eps^2|\eta^* |}{|z-\eps^2\eta^*|\ |z|}|f(\eps \Tc^{-1}(\tfrac\eta\eps)+\zea{i,j})|\ |\det D\Tc^{-1}|(\tfrac\eta\eps) \d \eta  \\
\leq & \frac{2C\eps}{|z|}  \int_{B(0,2\eps)^c}   \frac{|f(\eps \Tc^{-1}(\tfrac\eta\eps)+\zea{i,j})|\ |\det D\Tc^{-1}|(\tfrac\eta\eps)}{|\eta|} \d \eta,
\end{split}\end{equation*}
so by \eqref{est biot} 
\begin{equation*}\begin{split}
\Big| (D&\Tca{i,j})^T(x) \int_{\{y\in \Pie,\  |\Tca{i,j}(y)|> 2  \}}   \Biggl(\frac{\Tca{i,j}(x)-\Tca{i,j}(y)^*}{|\Tca{i,j}(x)-\Tca{i,j}(y)^*|^2}- \frac{\Tca{i,j}(x)}{|\Tca{i,j}(x)|^2}\Biggl)^\perp f(y)\d y \Big| \\
\leq& \frac{C\eps}{|z|} \| f(\eps \Tc^{-1}(\tfrac\eta\eps)+\zea{i,j}) \det D\Tc^{-1}(\tfrac\eta\eps) \|_{\sL^\infty}^{1/2} \| f(\eps \Tc^{-1}(\tfrac\eta\eps)+\zea{i,j}) \det D\Tc^{-1}(\tfrac\eta\eps) \|_{\sL^1}^{1/2}\\
\leq& \frac{C\eps}{|z|} \| f \|_{\sL^\infty}^{1/2} \| f \|_{\sL^1}^{1/2} \leq \frac{C\eps M_{0}}{|z|},
\end{split}\end{equation*}
where we have changed  variables back. Bringing together the estimates in the two subdomains, we conclude that
\[
|\wek{4}_{i,j}(x)|\leq  CM_{0} \eps + \frac{C\eps M_{0}}{|\eps \Tca{i,j}(x)|} .
\]
As for $|\wek3_{i,j}(x)|$, the first part is easy to estimate in $\sL^2$:
\[
\Big\| \sum_{i,j} \fe{i,j} CM_{0} \eps \Big\|_{\sL^2(\Pie)} \leq  CM_{0} \eps \Bigl( (\Nea)^{2} 4(\varepsilon +\epsa)^2 \Bigl)^{1/2} =  C M_{0} \eps.
\]
Concerning the last part, as there exists $\delta$ such that $\supp \fe{i,j} \subset B(\zea{i,j},\sqrt{2}(\eps+\epsa)) \setminus B( \zea{i,j}, \delta \eps)$, by Lemma \ref{lem anneau} we know that $\eps\Tca{i,j}(x)$ belongs to $B(0,C_{1}\sqrt{2}(\eps+\epsa)) \setminus B( 0, C_{2}\delta \eps)$. Hence we use that $\fe{i,j}$ have disjoint supports and we change variable $z=\eps \Tca{i,j}(x)$:
\begin{equation*}
\begin{split}
\Big\| \sum_{i,j} \fe{i,j}(x) \frac{C\eps M_{0}}{|\eps \Tca{i,j}(x)|}  \Big\|_{\sL^2(\Pie)} 
&\leq C\eps M_{0}  \Bigl( \sum_{i,j} \int_{\supp \fe{i,j}} \frac{1}{|\eps \Tca{i,j}(x)|^{2}} \d x \Bigl)^{1/2}\\
&\leq C\eps M_{0}  \Bigl( \sum_{i,j} \int_{B(0,C_{1}\sqrt{2}(\eps+\epsa)) \setminus B(0, C_{2} \delta \eps)} \frac{1}{ |z|^{2}} \d z \Bigl)^{1/2}\\
&\leq C\eps M_{0}\Bigl(  (\Nea)^{2} \ln\frac{C(\eps+\epsa)}{\eps} \Bigl)^{1/2} \leq C M_{0}|\ln \eps|^{1/2} \eps^{1-\alpha}.
\end{split}
\end{equation*}

Therefore, we have established that 
\[\|\wek{4}[f]\|_{\sL^2(\Pie)} \leq C M_{0} \Big(\eps + |\ln \eps|^{1/2} \eps^{1-\alpha} \Big),\]
which tends to zero as $\eps\to 0$, because we are considering the case $\alpha<1$. Its ends this proof.
\end{proof}

Bringing together all the propositions of this subsection, we have proved the following theorem:

\begin{theorem}\label{theo we}
We recall that  $\alpha_c(\mu)=2-\mu$.
Let $\mu\in [0,1]$ and $\alpha\in (0,\alpha_{c}(\mu))$ be fixed. Then
\begin{equation*}
\|\we[f]\|_{\sL^2(\R^2)}\to 0\quad \mbox{ as }\eps\to 0,
\end{equation*}
uniformly in $f$ verifying
\[ \| f \|_{\sL^1\cap \sL^\infty(\R^2)} \leq M_{0}.\] 
\end{theorem}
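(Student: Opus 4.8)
The plan is to assemble Theorem~\ref{theo we} directly from the propositions already established in this subsection, after splitting $\R^2$ into the union of the inclusions and the perforated domain $\Pie$. First I would write
\[
\|\we[f]\|_{\sL^2(\R^2)}^2 = \|\we[f]\|_{\sL^2(\dominf{\Pie})}^2 + \|\we[f]\|_{\sL^2(\Pie)}^2,
\]
and handle the first term with Proposition~\ref{prop 0} (taken with $p=2$), which gives convergence to zero inside the inclusions for all $(\alpha,\mu)\in(0,\infty)\times[0,1)\cup(0,1)\times\{1\}$; this range contains every admissible pair since $\alpha<\alpha_c(\mu)=2-\mu$ forces $\alpha<1$ when $\mu=1$. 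For the second term I would invoke the decomposition \eqref{decompo we}, $\we[f]=\sum_{k=1}^4\wek{k}[f]$, and estimate each summand separately in $\sL^2(\Pie)$.

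The key step is then a case distinction on $\mu$. For $\mu\in[0,1)$ and any $\alpha>0$: Proposition~\ref{prop k} gives $\wek{3}[f]\to0$ and $\wek{4}[f]\to0$ in $\sL^2(\Pie)$, while Propositions~\ref{prop 1} and \ref{prop 2} give $\wek{1}[f]\to0$ and $\wek{2}[f]\to0$ in $\sL^2(\Pie)$ under the hypothesis $\alpha<2-\mu$, which is exactly our assumption. For $\mu=1$ (hence $\alpha\in(0,1)$): Propositions~\ref{prop 1} and \ref{prop 2} still apply since $\alpha<2-1=1$, and Propositions~\ref{prop 3} and \ref{prop 4} supply the convergence of $\wek{3}[f]$ and $\wek{4}[f]$ in $\sL^2(\Pie)$. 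In either case all four pieces tend to zero, so $\|\we[f]\|_{\sL^2(\Pie)}\to0$ by the triangle inequality.

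Finally I would note that every one of the cited propositions states its convergence uniformly over $f$ with $\|f\|_{\sL^1\cap\sL^\infty(\R^2)}\le M_0$, and the triangle inequality preserves this uniformity since the number of terms (four, plus the inclusion term) is fixed; hence the combined bound is uniform in such $f$, which is the claimed conclusion. There is essentially no obstacle here: the real work has already been done in Propositions~\ref{prop 0}--\ref{prop 4}, and the only thing to be careful about is bookkeeping the ranges of $(\alpha,\mu)$ so that each proposition is applied within its stated hypotheses — in particular checking that $\alpha<\alpha_c(\mu)$ together with $\mu\le1$ always places us in a regime covered by the relevant lemmas, and that the $\mu=1$ borderline is routed through Propositions~\ref{prop 3}--\ref{prop 4} rather than Proposition~\ref{prop k}.
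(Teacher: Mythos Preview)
Your proposal is correct and matches the paper's approach exactly: the paper simply states that Theorem~\ref{theo we} follows by ``bringing together all the propositions of this subsection,'' and your write-up is just a careful unpacking of that sentence. The case split on $\mu$ and the bookkeeping of which proposition handles which term are precisely what the paper leaves implicit.
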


\subsection{Convergence of $\re$}\label{sect Leray}

In the decomposition 
\[ \ueps[f] - \Kr{f1_{\Pie}} = \re[f] - \we[f], \]
with
\[
\we[f]=\Kr{f1_{\Pie}}-\ve[f] \qquad \text{and} \qquad \re[f]:=\ueps[f]-\ve[f],
\]
we have already dealt with $\we$. Now we identify $\re$ as the Leray projector of $\we$ on $\Pie$:

\begin{lemma}\label{lem.decomp} 
With the above definition, for any $\alpha>0$, $\mu\in [0,1]$ and $\varepsilon>0$, $\re[f]$ is the Leray projector of $\we[f]$:
$$\re[f] = \mathbb P^\eps(\we[f]).$$
\end{lemma}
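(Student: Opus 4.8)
The plan is to verify that $\re[f]$ satisfies the two defining properties of the Leray projection of $\we[f]$ on the domain $\Pie$: namely that $\re[f]$ is divergence-free with zero normal trace on $\partial\Pie$ (and vanishing at infinity), and that $\we[f]-\re[f]$ is a gradient. The Leray projector $\mathbb{P}^\eps$ on $\Pie$ is characterized (on, say, $\sL^2$ vector fields) by the Hodge-type decomposition $w = \mathbb{P}^\eps w + \nabla q$, where $\div(\mathbb{P}^\eps w)=0$ in $\Pie$, $(\mathbb{P}^\eps w)\cdot\bn = 0$ on $\partial\Pie$, and $q$ solves the corresponding Neumann problem. So it suffices to exhibit a scalar function $q^\eps$ with $\we[f]-\re[f]=\nabla q^\eps$ and to check the boundary and divergence conditions on $\re[f]$.

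First I would recall what is already established. By \eqref{eq.ue}, $\ueps[f]$ is divergence-free in $\Pie$, tangent to $\partial\Pie$, vanishes at infinity, and has zero circulation around each $\Kea{i,j}$; by \eqref{eq.ve}, the correction $\ve[f]$ enjoys exactly the same four properties (divergence-free, zero at infinity, tangent to $\partial\Pie$, zero circulation around each inclusion). Hence $\re[f]=\ueps[f]-\ve[f]$ is divergence-free in $\Pie$, tangent to $\partial\Pie$, decays at infinity, and has zero circulation around each inclusion. Moreover, $\curl\ueps[f]=f$ in $\Pie$ while $\curl\ve[f]=f$ plus terms supported on $\supp\nabla\fe{i,j}$; in particular $\curl\re[f]=\curl\we[f]$ in $\Pie$, since $\we[f]=\Kr{f1_{\Pie}}-\ve[f]$ and $\curl\Kr{f1_{\Pie}}=f$ in all of $\R^2$. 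Thus $\we[f]-\re[f] = \Kr{f1_{\Pie}}-\ueps[f]$ is curl-free in $\Pie$, divergence-free nowhere is needed — but crucially it has zero circulation around every inclusion (both $\Kr{f1_{\Pie}}$ and $\ueps[f]$ do, the former by the Stokes-formula computation in Section \ref{sect 2}). A curl-free $\sL^2$ vector field on the multiply-connected domain $\Pie$ with vanishing circulation around each hole is a gradient: there exists $q^\eps$ (single-valued, in $\sH^1_{\loc}$) with $\we[f]-\re[f]=\nabla q^\eps$. Combined with the divergence-free/tangency properties of $\re[f]$ and the uniqueness of the Hodge decomposition, this identifies $\re[f]=\mathbb{P}^\eps(\we[f])$.

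The one point deserving care, and the main obstacle, is the functional-analytic setup of the Leray projector on the \emph{unbounded}, multiply-connected domain $\Pie$: one must make sure $\we[f]$ and $\re[f]$ lie in the space on which $\mathbb{P}^\eps$ is defined (e.g. $\sL^2(\Pie)$, using $\we[f]\in\sL^2$ from Theorem \ref{theo we} and $\re[f]=\ueps[f]-\ve[f]$ with the analogous bounds), and that the decomposition into gradient part plus solenoidal tangent part is unique in that space. This is where the zero-circulation conditions are essential — without them $\we[f]-\re[f]$ would only be locally a gradient, with jumps in the potential around the holes, and would not lie in the range of $\nabla$ on $\Pie$. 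Once the function space is fixed and orthogonality of the decomposition is invoked, the identification is immediate from the computations of the previous sections; I would therefore spend the bulk of the proof citing \eqref{eq.ue}, \eqref{eq.ve}, and the Stokes-formula identity for the circulations, and only briefly recall the Hodge/Leray decomposition on exterior domains (e.g. as in the references on Biot--Savart laws in exterior domains already cited).
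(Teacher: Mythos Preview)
Your proposal is correct and follows essentially the same approach as the paper: both verify that $\re[f]$ is divergence-free and tangent to $\partial\Pie$, that $\curl\re[f]=\curl\we[f]$, and that the circulations around each $\Kea{i,j}$ agree (via the Stokes-formula computation for $\Kr{f1_{\Pie}}$), then conclude by uniqueness of the Leray/Hodge decomposition. Your framing through ``$\we[f]-\re[f]$ is curl-free with zero circulations, hence a global gradient'' is just the contrapositive of the paper's ``$\re[f]$ matches the four defining conditions of $\mathbb{P}^\eps(\we[f])$'', and your extra paragraph on the $\sL^2$ functional setting for the unbounded multiply-connected domain is a reasonable elaboration that the paper leaves implicit.
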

\begin{proof}
Any $u$ can be decomposed as $u=v+\nabla p$, where $v=\mathbb P^\eps(u)$ is the Leray projector on $\Pie$, i.e. the unique vector satisfying
$$\left\{\begin{array}{rcll}
\div v&=&0,&\mbox{in }\Pie\\
\curl v&=&\curl u,&\mbox{in }\Pie\\
v\cdot\bn &=&0,&\mbox{on }\partial\Pie\\
\oint_{\partial\Kea{i,j}}v\cdot\tau\d s&=&\oint_{\partial\Kea{i,j}}u\cdot\tau\d s,&\mbox{for any }j\in\{1,\ldots,\Ny\},i\in\{1,\ldots,\Nx\}.
\end{array}\right.$$
In our case, we have according to \eqref{eq.ve} and  \eqref{eq.ue}:
$$\left\{\begin{array}{rcll}
\div\re[f]&=&0&\mbox{in }\Pie\\
\curl\re[f]&=&f-\curl \ve[f] =\curl\we[f]&\mbox{in }\Pie\\
\re[f]\cdot\bn&=&0&\mbox{on }\partial\Pie\\
\oint_{\partial\Kea{i,j}}\re[f]\cdot\tau\d s&=&
\oint_{\partial\Kea{i,j}}\we[f]\cdot\tau\d s&\mbox{for any }j,i.
\end{array}\right.$$
The last equality comes from the equality $\re[f]= \ueps[f] - \Kr{f1_{\Pie}} +\we[f]$ and the Green formula 
$$\oint_{\partial\Kea{i,j}}\Kr{f1_{\Pie}} \cdot\tau\d s =\int_{\Kea{i,j}}f1_{\Pie}=0,$$ 
because $1_{\Pie}$ is the characteristic function on $\Pie$. The uniqueness of 
the decomposition yields the Lemma. 
\end{proof}

The convergence of $\re[f]$ is now obvious. Indeed, we recall that the Leray projector is orthogonal for the $\sL^2$ norm, then for any $\alpha$, $\mu$, $\varepsilon$ and $f$ we have:
\[
\| \re[f] \|_{\sL^2(\Pie)} \leq \| \we [f] \|_{\sL^2(\Pie)} \leq \| \we [f] \|_{\sL^2(\R^2)}.
\]
So, extending $\ueps[f]$ by zero inside the inclusions, we deduce directly from Theorem \ref{theo we}:
\begin{theorem}\label{theo ue}
We recall that  $\alpha_c(\mu)=2-\mu$.
Let $\mu\in [0,1]$ and $\alpha\in (0,\alpha_{c}(\mu))$ be fixed. Then
\begin{equation*}
\|\ueps[f] - \Kr{f1_{\Pie}} \|_{\sL^2(\R^2)}\to 0\quad \mbox{ as }\eps\to 0,
\end{equation*}
uniformly in $f$ verifying
\[ \| f \|_{\sL^1\cap \sL^\infty(\R^2)} \leq M_{0}.\] 
\end{theorem}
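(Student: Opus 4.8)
The plan is to reduce Theorem \ref{theo ue} entirely to Theorem \ref{theo we} via the Leray projection, since this makes the whole argument essentially free once the hard analytic work on $\we[f]$ has been done. First I would recall the decomposition \eqref{decomp} restricted to fixed time, namely
\[
\ueps[f] - \Kr{f1_{\Pie}} = \re[f] - \we[f],\qquad \re[f] := \ueps[f]-\ve[f],\qquad \we[f] := \Kr{f1_{\Pie}}-\ve[f],
\]
so that controlling $\ueps[f]-\Kr{f1_{\Pie}}$ in $\sL^2$ amounts to controlling both $\re[f]$ and $\we[f]$ in $\sL^2$. The term $\we[f]$ is handled by Theorem \ref{theo we}; the entire content of the remaining step is to show $\|\re[f]\|_{\sL^2}$ is bounded by $\|\we[f]\|_{\sL^2}$.

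Next I would establish that $\re[f]$ is exactly the Leray projection $\mathbb P^\eps(\we[f])$ onto divergence-free, tangent vector fields on $\Pie$ with prescribed circulations. For this I would verify, using \eqref{eq.ve} and \eqref{eq.ue}, that $\re[f]$ is divergence free in $\Pie$, that $\curl \re[f] = f - \curl\ve[f] = \curl\we[f]$ in $\Pie$, that $\re[f]\cdot\bn = 0$ on $\partial\Pie$ (both $\ueps[f]$ and $\ve[f]$ being tangent there), and that the circulations of $\re[f]$ around each $\Kea{i,j}$ coincide with those of $\we[f]$. The latter uses the identity $\re[f] = (\ueps[f]-\Kr{f1_{\Pie}}) + \we[f]$ together with the Stokes/Green formula $\oint_{\partial\Kea{i,j}}\Kr{f1_{\Pie}}\cdot\tau\,\mathrm d s = \int_{\Kea{i,j}} f1_{\Pie} = 0$, since $\Kea{i,j}$ lies outside $\Pie$. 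By uniqueness of the Helmholtz–Leray decomposition on $\Pie$ (valid for any $\alpha,\mu,\eps$, the domain being fixed here), this identifies $\re[f]$ with $\mathbb P^\eps(\we[f])$. This is Lemma \ref{lem.decomp}.

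Finally, since the Leray projector is an orthogonal projection in $\sL^2$, it is norm non-increasing, so
\[
\|\re[f]\|_{\sL^2(\Pie)} = \|\mathbb P^\eps(\we[f])\|_{\sL^2(\Pie)} \leq \|\we[f]\|_{\sL^2(\Pie)} \leq \|\we[f]\|_{\sL^2(\R^2)},
\]
for every $\alpha$, $\mu$, $\eps$ and $f$. Extending $\ueps[f]$ by zero inside the inclusions (so that the $\sL^2(\R^2)$ norm of $\ueps[f]-\Kr{f1_{\Pie}}$ is controlled by the $\sL^2$ norms of $\re[f]$ and $\we[f]$ plus the contribution on $\dominf{\Pie}$, the latter being part of $\we[f]$ already), the conclusion follows: for $\alpha\in(0,\alpha_c(\mu))$, Theorem \ref{theo we} gives $\|\we[f]\|_{\sL^2(\R^2)}\to 0$ uniformly over $\|f\|_{\sL^1\cap\sL^\infty(\R^2)}\leq M_0$, hence the same for $\|\ueps[f]-\Kr{f1_{\Pie}}\|_{\sL^2(\R^2)}$.

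There is no genuine obstacle here: the only point requiring a little care is the circulation-matching in the Leray characterization, where one must keep track of the zero circulations of $\ueps[f]$ and $\ve[f]$ and invoke that $\Kr{f1_{\Pie}}$ also has zero circulation around each inclusion because the enclosed mass of $f1_{\Pie}$ vanishes. Everything else is a direct consequence of the orthogonality of $\mathbb P^\eps$ and of Theorem \ref{theo we}, which is where the real difficulty of the paper (the fixed-time estimates on $\wek1,\dots,\wek4$) was already concentrated.
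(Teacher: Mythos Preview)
Your proposal is correct and follows essentially the same route as the paper: identify $\re[f]$ as the Leray projection $\mathbb P^\eps(\we[f])$ (exactly as in Lemma \ref{lem.decomp}), use orthogonality of $\mathbb P^\eps$ in $\sL^2$ to bound $\|\re[f]\|_{\sL^2(\Pie)}\leq\|\we[f]\|_{\sL^2(\R^2)}$, and conclude from Theorem \ref{theo we}. Your remark that the contribution on $\dominf{\Pie}$ is already contained in $\we[f]$ (since both $\ueps[f]$ and $\ve[f]$ vanish there) is a helpful clarification that the paper leaves implicit.
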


\section{Proof of the main Theorem}\label{sect conclusion}

The way to conclude comes from \cite{Lac-curve}  and we write the main steps for a sake of completeness. In general the Sobolev and Lebesgue spaces are considered in the full plane, and $(\ueps,\ome)$  are extended by zero in the obstacles. 
In all this section, we fix $\mu \in [0,1]$  and $\alpha\in (0,\alpha_{c}(\mu))$.

\subsection{Weak convergence of the vorticity}

Thanks to the transport equation \eqref{est transport}, extracting a subsequence, we have that
\[ \ome \rightharpoonup \omega\quad\text{ weak-$*$ in }\sL^\infty(\R^+; \sL^1\cap \sL^\infty(\R^2)),\]
which establishes the point {\it (b)} of Theorem \ref{main theo}, up to a subsequence.

We introduce
\[
M_{0}:= \max \{ \| \omega_{0} \|_{\sL^1(\R^2)},  \| \omega_{0} \|_{\sL^\infty(\R^2)}\},
\]
hence for any $t$ and $\eps$
\begin{equation}\label{ome M0}
\| \ome(t, \cdot) \|_{\sL^1\cap \sL^\infty(\R^2)} \leq M_{0}.
\end{equation}

\subsection{Strong convergence of the velocity}

First we begin by a temporal estimate. 

\begin{lemma}
 There exists a constant $C$ independent of $\varepsilon$ and $t$ such that
\[\|  \partial_t \ome \|_{\sH^{-1}(\R^2)} \leq C.\]
\end{lemma}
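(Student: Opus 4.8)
The statement to prove is the uniform bound $\|\partial_t\ome\|_{\sH^{-1}(\R^2)}\leq C$, with $C$ independent of $\eps$ and $t$. The natural approach is to use the vorticity transport equation $\partial_t\ome = -\ueps\cdot\nabla\ome = -\div(\ueps\,\ome)$ (valid since $\div\ueps=0$ in $\Pie$, and both $\ueps$ and $\ome$ are extended by zero inside the inclusions), and then to test against an arbitrary $\varphi\in\sH^1(\R^2)$.

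\textbf{Main steps.} First I would recall that $\ome$ is transported by the divergence-free field $\ueps$, so in the sense of distributions on $\R^2$ (using that $\ueps\cdot\bn=0$ on $\partial\Pie$, so no boundary terms appear when integrating by parts) we have, for any test function $\varphi\in\C^\infty_c(\R^2)$,
\[
\langle \partial_t\ome,\varphi\rangle = \int_{\Pie}\ome\,\ueps\cdot\nabla\varphi\,\d x.
\]
Second, I would estimate the right-hand side by Hölder's inequality:
\[
\left| \int_{\Pie}\ome\,\ueps\cdot\nabla\varphi\,\d x \right| \leq \|\ueps\ome\|_{\sL^2(\R^2)}\,\|\nabla\varphi\|_{\sL^2(\R^2)} \leq \|\ueps\|_{\sL^2(\supp\ome)}\,\|\ome\|_{\sL^\infty(\R^2)}\,\|\varphi\|_{\sH^1(\R^2)},
\]
so that the whole matter reduces to bounding $\|\ueps\|_{\sL^2}$ on the support of the vorticity (which is compact and, by finite speed of propagation of the support combined with the uniform velocity bound, contained in a fixed ball on any finite time interval — but here we need a bound valid for all $t$, so I would instead avoid the $\sL^2$ norm of $\ueps$ on the whole support and use a more careful splitting, see below). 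The third step is to control $\|\ueps(t,\cdot)\|$ suitably: by Theorem \ref{theo ue} applied to $f=\ome(t,\cdot)$, which satisfies $\|f\|_{\sL^1\cap\sL^\infty(\R^2)}\leq M_0$ by \eqref{ome M0}, we have $\|\ueps[\ome] - \Kr{\ome 1_{\Pie}}\|_{\sL^2(\R^2)}\to 0$, hence is uniformly bounded; and $\Kr{\ome 1_{\Pie}}$ is bounded in $\sL^\infty(\R^2)$ uniformly by \eqref{est biot}. To deal with the fact that $\Kr{\ome 1_{\Pie}}$ is not globally $\sL^2$, I would pair it against $\nabla\varphi$ only where $\ome\neq 0$: write $\int \ome\,\ueps\cdot\nabla\varphi = \int \ome(\ueps-\Kr{\ome1_{\Pie}})\cdot\nabla\varphi + \int\ome\,\Kr{\ome 1_{\Pie}}\cdot\nabla\varphi$, bound the first term by $\|\ome\|_{\sL^\infty}\|\ueps-\Kr{\ome1_{\Pie}}\|_{\sL^2}\|\nabla\varphi\|_{\sL^2}$ and the second by $\|\ome\|_{\sL^2}\|\Kr{\ome1_{\Pie}}\|_{\sL^\infty}\|\nabla\varphi\|_{\sL^2}$, both controlled by $M_0$-dependent constants times $\|\varphi\|_{\sH^1}$. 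Taking the supremum over $\|\varphi\|_{\sH^1}\leq 1$ and using density of $\C^\infty_c$ in $\sH^1$ yields the claim.

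\textbf{Main obstacle.} The delicate point is that $\ueps$ itself is not in $\sL^2(\R^2)$ (only $\nabla^\perp\Delta^{-1}$ of an $\sL^1\cap\sL^\infty$ function with possibly nonzero mean), which is exactly why the temporal estimate must be phrased in $\sH^{-1}$ rather than something stronger, and why the decomposition of $\ueps$ into the $\sL^2$-small correction plus the $\sL^\infty$-bounded Biot--Savart kernel (together with the compact support of $\ome$) is essential; one has to be careful to place each factor in the right Lebesgue space so that no global $\sL^2$ norm of $\ueps$ or of $\varphi$ is ever invoked. Once this bookkeeping is done, the estimate is immediate and uniform in $t$ and $\eps$.
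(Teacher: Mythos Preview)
Your proposal is correct and follows essentially the same argument as the paper: test the transport equation against $\varphi$, split $\ueps = (\ueps - \Kr{\ome}) + \Kr{\ome}$, and bound the first piece in $\sL^2(\R^2)$ via Theorem~\ref{theo ue} and the second in $\sL^\infty(\R^2)$ via \eqref{est biot}, pairing each against $\ome\nabla\varphi$ in the complementary space. The only cosmetic difference is that the paper goes directly to this decomposition, whereas you first try $\|\ueps\|_{\sL^2(\supp\ome)}$ before arriving at the same splitting; note also that $\ome 1_{\Pie}=\ome$ since $\ome$ is extended by zero, so your $\Kr{\ome 1_{\Pie}}$ coincides with the paper's $\Kr{\ome}$.
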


\begin{proof} For any $\varepsilon>0$, as $\ueps$ is regular enough and tangent to the boundary,  we can write the equation verified by  $\ome$ for any test function $\varphi \in \sH^1(\R^2)$:
\begin{eqnarray*}
(\partial_t \ome,\varphi)_{\sH^{-1} \times \sH^1} 
=\int_{\Pie}  \ueps \ome \cdot \nabla \varphi = \int_{\R^2}  (\ueps- \Kr{\ome})\ome \cdot \nabla \varphi + \int_{\R^2}  \Kr{\ome}\ome \cdot \nabla \varphi,
\end{eqnarray*}
which is bounded by $C\|\nabla \varphi\|_{\sL^2}$ for the following reason. According to \eqref{ome M0}, Theorem \ref{theo ue} states that $\ueps- \Kr{\ome}$ is uniformly bounded in $\sL^2(\R^2)$ which gives the estimate for the first right hand side term.
For the second term,  we know from \eqref{est biot} and \eqref{ome M0} that  $\Kr{\ome}$ is uniformly bounded whereas $\ome$ is uniformly bounded in $\sL^2$.  
It gives the desired estimates in $\sH^{-1}$.
\end{proof}

\begin{lemma}\label{vorticity limit}
There exists a subsequence of $\ome$ (again denoted by $\ome$) such that $\ome(t,\cdot)\rightharpoonup \omega(t,\cdot)$  in  weak-$\sL^4(\R^2)$ and  in 
 weak-$\sL^{\frac43}(\R^2)$ for all $t$.
\end{lemma}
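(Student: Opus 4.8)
The plan is to upgrade the weak-$*$ convergence of part {\it (b)} to a genuine weak convergence at \emph{every} fixed time $t$, using the $\sH^{-1}$ control of $\partial_t\ome$ from the previous lemma together with the uniform spatial bound \eqref{ome M0}. The mechanism is a standard Arzelà--Ascoli plus diagonal extraction argument.

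First I fix a countable family $\{\varphi_k\}_{k\geq1}\subset\C_c^\infty(\R^2)$ which is dense in $\sL^p(\R^2)$ for every $p\in[1,\infty)$; in particular each $\varphi_k\in\sH^1(\R^2)$. For each $k$ set $g^\eps_k(t):=\int_{\R^2}\ome(t,x)\varphi_k(x)\d x$. By \eqref{ome M0}, $|g^\eps_k(t)|\leq M_{0}\|\varphi_k\|_{\sL^\infty}$ uniformly in $\eps$ and $t$, and by the previous lemma, for $s\leq t$,
\[
|g^\eps_k(t)-g^\eps_k(s)|=\Big|\int_s^t(\partial_\sigma\ome(\sigma,\cdot),\varphi_k)_{\sH^{-1}\times\sH^1}\d\sigma\Big|\leq C\|\varphi_k\|_{\sH^1}\,|t-s|.
\]
Hence $(g^\eps_k)_\eps$ is bounded and equi-Lipschitz on $[0,\infty)$, so by Arzelà--Ascoli on each interval $[0,T]$ and a diagonal extraction over $T\to\infty$ and over $k$, I obtain a further subsequence (still denoted $\eps$) such that $g^\eps_k\to g_k$ locally uniformly on $[0,\infty)$ for every $k$; in particular $g^\eps_k(t)\to g_k(t)$ for all $t\geq0$.

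Now fix $t\geq0$. By \eqref{ome M0} the family $(\ome(t,\cdot))_\eps$ is bounded in the reflexive space $\sL^4(\R^2)$, so every subsequence has a sub-subsequence converging weakly in $\sL^4(\R^2)$; since $\langle\ome(t,\cdot),\varphi_k\rangle=g^\eps_k(t)\to g_k(t)$ for all $k$ and $\{\varphi_k\}$ is dense in $\sL^{4/3}(\R^2)$, the dual of $\sL^4(\R^2)$, all these weak limits coincide, so the whole sequence $\ome(t,\cdot)$ converges weakly in $\sL^4(\R^2)$ to some $\tilde\omega(t,\cdot)$. Running the same argument in $\sL^{4/3}(\R^2)$ (reflexive, with dual $\sL^4(\R^2)$) gives weak-$\sL^{4/3}$ convergence, with the same limit since both are determined by the pairings against $\C_c^\infty(\R^2)$. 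Finally, to identify $\tilde\omega$ with the $\omega$ of part {\it (b)}, I test against $\phi\in\C_c^\infty((0,\infty)\times\R^2)$: for each $t$ one has $\int_{\R^2}\ome(t,\cdot)\,\phi(t,\cdot)\to\int_{\R^2}\tilde\omega(t,\cdot)\,\phi(t,\cdot)$, these quantities are bounded by $M_{0}\|\phi\|_{\sL^\infty}$ with support compact in $t$, so dominated convergence yields $\iint\ome\,\phi\to\iint\tilde\omega\,\phi$; comparing with the weak-$*$ limit from part {\it (b)} gives $\tilde\omega=\omega$ a.e., and one takes $\omega(t,\cdot):=\tilde\omega(t,\cdot)$ as representative, which is exactly the asserted conclusion.

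The only genuinely delicate point is that the convergence holds for \emph{all} $t$ and not merely for a.e.\ $t$: this is precisely what the equi-Lipschitz bound stemming from $\|\partial_t\ome\|_{\sH^{-1}}\leq C$ buys us, and it is the reason the preceding lemma was established. Everything else --- density of $\C_c^\infty(\R^2)$ in the $\sL^p$ scale, the inclusion $\C_c^\infty(\R^2)\subset\sH^1(\R^2)$, and the elementary fact that a bounded sequence in a reflexive Banach space whose pairings converge on a dense subset of the dual converges weakly --- is routine.
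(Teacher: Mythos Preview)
Your proof is correct and follows essentially the same approach as the paper's: both use the $\sH^{-1}$ time estimate from the preceding lemma to obtain equicontinuity in $t$, then a diagonal extraction, then density of $\C_c^\infty(\R^2)$ in the relevant dual spaces. The only cosmetic difference is the order of operations: the paper diagonalizes over $t\in\Q$ and then extends to all $t$ by the time continuity, whereas you diagonalize over a countable dense family of test functions via Arzel\`a--Ascoli; these are two standard and interchangeable organizations of the same compactness argument.
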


\begin{trivlist}
                       \item[]\hspace{0cm}{\bf Sketch of proof: }
                       The proof of this lemma is done in \cite[Prop. 5.2]{Lac-curve}. The idea is the following: by Banach-Alaoglu's theorem, we can extract, for each $t$, a subsequence such that  $\ome(t,\cdot)\rightharpoonup \omega(t,\cdot)$  in 
 weak-$\sL^4(\R^2)$  and in  weak-$\sL^{\frac43}(\R^2)$, but the subsequence depends on the time $t$, whereas we want a common sequence for each $t$. For that, we choose by diagonal extraction a common sequence for each $t\in \mathbb{Q}$. Next, for any test function in $\C^\infty_0(\R^2)$ and thanks to the time estimate of the previous lemma, we prove that the sequence works for all $t$. The desired result is obtained by the density of $\C^\infty_0(\R^2)$ in $\sH^1(\R^2)$.  \hspace{0cm} \hfill $\blacksquare$
                     \end{trivlist}

Now, defining $u:= \Kr{\omega}$, we use this subsequence to pass to the limit in the decomposition
\begin{equation}\label{decomp2}
\ueps-u = (\ueps- \Kr{\ome}) + \Kr{\ome-\omega}.
\end{equation}

\begin{theorem}\label{theo : velocity}
We have $\ueps \to u$ strongly in $\sL^2_{\loc}(\R^+ \times \R^2)$,
with $u=\Kr{\omega}$.
\end{theorem}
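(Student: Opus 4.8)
The plan is to use the decomposition \eqref{decomp2}, namely $\ueps-u = (\ueps- \Kr{\ome}) + \Kr{\ome-\omega}$, and to show that each of the two terms on the right converges to zero strongly in $\sL^2_{\loc}(\R^+\times\R^2)$. For the first term, I would invoke Theorem~\ref{theo ue} together with the uniform bound \eqref{ome M0}: at each fixed time $t$, we apply Theorem~\ref{theo ue} with $f=\ome(t,\cdot)$, which gives $\|\ueps(t,\cdot)-\Kr{\ome(t,\cdot)1_{\Pie}}\|_{\sL^2(\R^2)}\to 0$ uniformly in $t$ (since the bound in Theorem~\ref{theo ue} depends on $\ome(t,\cdot)$ only through $M_0$). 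Noting that $\ome(t,\cdot)1_{\Pie}=\ome(t,\cdot)$ after extending by zero, and that the convergence is uniform in $t$, one integrates over any bounded time interval $[0,T]$ to obtain strong convergence of $\ueps-\Kr{\ome}$ to zero in $\sL^2([0,T]\times\R^2)$, hence in $\sL^2_{\loc}(\R^+\times\R^2)$.

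The second term $\Kr{\ome-\omega}$ is where the real work is. Here one uses Lemma~\ref{vorticity limit}: along the extracted subsequence, $\ome(t,\cdot)\rightharpoonup\omega(t,\cdot)$ weakly in $\sL^{4}(\R^2)$ and in $\sL^{4/3}(\R^2)$ for every $t$. The Biot-Savart operator $\Kr{\cdot}$ maps these spaces to spaces of functions that are, locally, compactly embedded after one gains a derivative: by the Calder\'on-Zygmund / Hardy-Littlewood-Sobolev estimates, $\Kr{g}$ is bounded in $\sW^{1,4/3}_{\loc}$ when $g\in\sL^{4/3}$ and in $\sL^\infty$ when $g\in \sL^1\cap\sL^4$, so on any ball $B_R$ the family $\Kr{\ome(t,\cdot)}$ is bounded in $\sW^{1,4/3}(B_R)$, which embeds compactly into $\sL^2(B_R)$. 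This yields, for each fixed $t$, that $\Kr{\ome(t,\cdot)}\to\Kr{\omega(t,\cdot)}$ strongly in $\sL^2_{\loc}(\R^2)$ (the full weak limit of $\ome(t,\cdot)$ being $\omega(t,\cdot)$ identifies the limit as $u(t,\cdot)=\Kr{\omega(t,\cdot)}$). The pointwise-in-$t$ convergence is then upgraded to $\sL^2_{\loc}(\R^+\times\R^2)$ convergence by dominated convergence, using the uniform-in-$t$, uniform-in-$\eps$ bound $\|\Kr{\ome}\|_{\sL^\infty}\le CM_0$ from \eqref{est biot}–\eqref{ome M0} as the dominating function on any $[0,T]\times B_R$.

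The main obstacle is the second term: one must be careful that the compactness argument producing strong $\sL^2_{\loc}$ convergence of $\Kr{\ome}$ is genuinely a \emph{compactness in space at fixed time} statement, combined with the right choice of exponents ($\sL^{4/3}$ for the local Sobolev gain, $\sL^4\cap\sL^1$ for the $\sL^\infty$ control near infinity of the kernel tail), and that the two ingredients — spatial compactness from Lemma~\ref{vorticity limit} plus the temporal equicontinuity encoded in the $\partial_t\ome\in\sH^{-1}$ bound — are assembled correctly so that the limit velocity $u$ coincides for all $t$ with the single function $\Kr{\omega}$. I would follow \cite{Lac-curve} here and simply cite the relevant proposition for the technical estimate, stressing that the only new input compared with \cite{Lac-curve} is our Theorem~\ref{theo ue}, which replaces the corresponding correction estimate in the single-obstacle setting.

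Once Theorem~\ref{theo : velocity} is established, point \textit{(a)} of Theorem~\ref{main theo} follows, and the strong $\sL^2_{\loc}$ convergence of $\ueps$ together with the weak-$*$ convergence of $\ome$ lets one pass to the limit in the weak formulation of \eqref{eq.vort} (the product $\ueps\,\ome$ converges in the sense of distributions since one factor converges strongly in $\sL^2_{\loc}$ and the other weakly-$*$ in $\sL^\infty$), showing that $(u,\omega)$ is a weak solution of the Euler equations in the full plane with initial vorticity $\omega_0$; uniqueness of such solutions (Yudovich, \cite{yudo}) then gives point \textit{(c)} and, since the limit is independent of the subsequence, the convergence holds for the full sequence.
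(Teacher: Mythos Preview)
Your proposal is correct. For the first term $\ueps-\Kr{\ome}$ your argument is exactly the paper's. For the second term $\Kr{\ome-\omega}$, however, the paper takes a more elementary route than your Calder\'on--Zygmund / Rellich--Kondrachov argument: it simply observes that for each fixed $(t,x)$ the Biot--Savart kernel $y\mapsto (x-y)^\perp/|x-y|^2$ lies in $\sL^{4/3}(B(x,1))$ near the singularity and in $\sL^{4}(B(x,1)^c)$ at infinity, which is precisely dual to the two exponents furnished by Lemma~\ref{vorticity limit}. Hence the weak convergence $\ome(t,\cdot)\rightharpoonup\omega(t,\cdot)$ in $\sL^4$ and in $\sL^{4/3}$ yields pointwise-in-$(t,x)$ convergence of $\Kr{\ome-\omega}(t,x)$ to zero, and a single application of dominated convergence in both variables (using the uniform $\sL^\infty$ bound from \eqref{est biot}--\eqref{ome M0}) gives the $\sL^2_{\loc}(\R^+\times\R^2)$ convergence. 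Your compactness argument is valid and perhaps more robust for settings where such a clean kernel duality is unavailable, but here the paper's direct pairing is shorter and avoids any appeal to singular-integral or Sobolev-embedding machinery; note also that the $\partial_t\ome\in\sH^{-1}$ bound you mention is already fully spent in proving Lemma~\ref{vorticity limit} and plays no further role in the proof of Theorem~\ref{theo : velocity} itself.
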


\begin{proof} The first  term on the right-hand side 
  of  \eqref{decomp2} converges uniformly in time to zero in $\sL^2(\R^2)$ (see Theorem \ref{theo ue} and \eqref{ome M0}). Then the dominated convergence theorem gives the limit in $\sL^2_{\loc}(\R^+ \times \R^2)$.

Concerning the last term: for $x$ fixed, the map $y\mapsto \frac{(x-y)^\perp}{|x-y|^2}$ belongs to  $\sL^{4/3}(B(x,1))\cap \sL^{4}(B(x,1)^c)$, then Lemma \ref{vorticity limit} implies 
that  for all $t,x$, we have 
$$\int_{\R^2}  \frac{(x-y)^\perp}{|x-y|^2} (\ome-\omega)(t,y)\d y \to 0\quad \text{ as }\varepsilon\to 0.$$ So, this integral converges pointwise to zero, and it is uniformly bounded by \eqref{est biot} with respect of $x$ and $t$. Applying   the dominated convergence theorem, we obtain the convergence of $\Kr{\ome-\omega}$ in $\sL^2_{\loc}(\R^+ \times \R^2)$.
This ends the proof.
\end{proof}

This theorem gives the point {\it(a)} of Theorem \ref{main theo}, up to a subsequence.

\subsection{Passing to the limit in the Euler equations}
The purpose of the rest of this section is to prove that $(u,\omega)$ is the unique solution of the Euler equations in $\R^2$.
\begin{theorem}
The pair $(u,\omega)$ obtained is a weak solution of the Euler equations in $\R^2$.
\end{theorem}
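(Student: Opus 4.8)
The plan is to verify that the pair $(u,\omega)$ satisfies the weak formulation of the 2D Euler equations in vorticity form in the full plane, namely that for every $\varphi\in\C^\infty_c([0,\infty)\times\R^2)$,
\[
\int_0^\infty\int_{\R^2}\omega\,\partial_t\varphi\d x\d t+\int_0^\infty\int_{\R^2}\omega\,u\cdot\nabla\varphi\d x\d t+\int_{\R^2}\omega_0\,\varphi(0,\cdot)\d x=0,
\]
together with $\div u=0$, $\curl u=\omega$ and the Biot-Savart relation $u=\Kr{\omega}$ (which holds by construction, since $u$ was \emph{defined} as $\Kr{\omega}$). First I would write down the corresponding weak formulation satisfied by $(\ueps,\ome)$ on $\Pie$: since $\ueps$ is a strong solution tangent to $\partial\Pie$, testing \eqref{eq.vort} against $\varphi$ and integrating by parts (the boundary terms vanish because $\ueps\cdot\bn=0$ on $\partial\Pie$) gives
\[
\int_0^\infty\int_{\Pie}\ome\,\partial_t\varphi\d x\d t+\int_0^\infty\int_{\Pie}\ome\,\ueps\cdot\nabla\varphi\d x\d t+\int_{\Pie}\omega_0\vert_{\Pie}\,\varphi(0,\cdot)\d x=0.
\]
Recalling that $\ome$ is extended by zero inside the inclusions, the domain of integration can be replaced by $\R^2$ in the linear terms at the cost of an error supported in $\bigcup_{i,j}\Kea{i,j}$, whose measure is $\O(\eps^2/(\eps+\epsa)^{1+\mu})\to0$ by \eqref{eq.nbincl}; this handles the linear term and the initial term, using also that $\varphi$ has compact support and $\omega_0$ is bounded.

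The core of the argument is then passing to the limit in the three terms. The $\partial_t\varphi$ term and the initial-data term pass to the limit directly from the weak-$*$ convergence $\ome\rightharpoonup\omega$ of point \emph{(b)} (or Lemma \ref{vorticity limit}), since $\partial_t\varphi$ and $\varphi(0,\cdot)$ are fixed $\sL^1\cap\sL^\infty$ test functions. For the nonlinear term $\int\int\ome\,\ueps\cdot\nabla\varphi$, I would split $\ueps=u+(\ueps-u)$. The piece $\int\int\ome\,(\ueps-u)\cdot\nabla\varphi$ is bounded by $\|\nabla\varphi\|_{\sL^\infty}\,\|\ome\|_{\sL^\infty_tL^2_x}\,\|\ueps-u\|_{\sL^2_{\loc}}$ on the (compact) support of $\varphi$, which tends to zero by Theorem \ref{theo : velocity}. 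The remaining piece $\int\int\ome\,u\cdot\nabla\varphi$ needs the \emph{product} of two only weakly converging quantities to converge to $\int\int\omega\,u\cdot\nabla\varphi$; this is the main obstacle. The standard way around it is to use compactness: $\ome$ is bounded in $\sL^\infty(\R^+;\sL^2(\R^2))$ and $\partial_t\ome$ is bounded in $\sL^\infty(\R^+;\sH^{-1}(\R^2))$ (the first lemma of Section \ref{sect conclusion}), so by an Aubin-Lions type argument $\ome$ converges to $\omega$ strongly in $\C_{\loc}(\R^+;\sH^{-1}_{\loc}(\R^2))$, hence strongly in $L^2_{\loc}(\R^+;\sH^{-1}_{\loc})$; meanwhile $u\cdot\nabla\varphi=\Kr{\omega}\cdot\nabla\varphi$ is a fixed function, and one may instead pair the strong $\sH^{-1}_{\loc}$ convergence of $\ome$ against the fixed $\sH^1$ function $u\cdot\nabla\varphi$. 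Alternatively, and perhaps more in the spirit of \cite{Lac-curve}, since $u$ is fixed one simply invokes the weak-$*$ convergence of $\ome$ against the fixed test function $u\cdot\nabla\varphi\in\sL^1\cap\sL^\infty$ directly — no compactness needed for this particular term because only $\ome$ varies. I would use this second, simpler route.

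Assembling these, the weak formulation for $(\ueps,\ome)$ converges term by term to the weak formulation for $(u,\omega)$, so $(u,\omega)$ is a weak solution of the Euler equations in $\R^2$ with initial vorticity $\omega_0$, and $\div u=0$, $\curl u=\omega$ follow from $u=\Kr{\omega}$. Since $\omega_0\in\C^\infty_c(\R^2)\subset\sL^1\cap\sL^\infty(\R^2)$, Yudovich's uniqueness theorem \cite{yudo} identifies $(u,\omega)$ as \emph{the} unique such solution, which both completes this proof and, combined with Theorems \ref{theo : velocity} and the weak-$*$ convergence, upgrades the convergence along a subsequence to convergence of the full sequence, finishing Theorem \ref{main theo}. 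The one technical point to be careful about is the zero-extension bookkeeping: one must check that replacing $\int_{\Pie}$ by $\int_{\R^2}$ in the \emph{nonlinear} term is legitimate, i.e. that $\ome\,\ueps$ extended by zero equals $(\ome\,\ueps)\vert_{\Pie}$ a.e., which is immediate since both $\ome$ and $\ueps$ vanish in the inclusions by our convention.
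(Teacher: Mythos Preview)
Your proof is correct and follows essentially the same approach as the paper: write the weak vorticity formulation for $(\ueps,\ome)$ (using tangency to kill boundary terms and the zero-extension convention to integrate over $\R^2$), then pass to the limit via the strong convergence of $\ueps$ and the weak-$*$ convergence of $\ome$. Your explicit splitting $\ueps=u+(\ueps-u)$ in the nonlinear term is just the standard unpacking of the paper's phrase ``strong--weak convergence of the pair $(\ueps,\ome)$''; the Aubin--Lions alternative you sketch is unnecessary here (as you note), and the Yudovich step you append belongs to the paragraph after the theorem rather than to its proof.
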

\begin{proof}
The divergence and curl conditions  are  verified by the expression: $u=\Kr{\omega}$.

Next, we use that $\ueps$ and $\ome$ verify  $\eqref{eq.vort}$ in the sense of distribution in $\Pie$ and the fact that $\ueps$ is regular and tangent to the boundary, to infer that for any test function $\varphi\in \C^\infty_0([0,\infty)\times \R^2)$, we have
\[
\int_0^\infty\int_{\R^2} \varphi_t \ome \d x\d t+\int_0^\infty\int_{\R^2} \nabla\varphi \cdot  \ueps \ome \d x \d t= -\int_{\R^2}\varphi(0,x)\omega_0(x)1_{\Pie}\d x,
\]
because we have extended $\ome$ by zero and set $\ome(0,\cdot)=\omega_{0}1_{\Pie}$.
By passing to the limit as $\varepsilon\to 0$, thanks to the strong-weak convergence of the pair $(\ueps,\ome)$, we conclude that $(u,\omega)$ verifies the vorticity equation. In the full plane, this is equivalent to state that $u$ verifies the velocity equation.
\end{proof}

All the  results of this section state that for any sequence $\varepsilon_{k}\to 0$, we can extract a subsequence such that $(\ueps,\ome)$ converges to $(u,\omega)$, which is a global weak solution to the 2D Euler equations in the full plane, and where $\omega$ belongs to $\sL^\infty(\R^+;\sL^1\cap\sL^\infty(\R^2))$. Such a solution is unique by the celebrated Yudovich's work \cite{yudo}. Therefore, this solution is the strong solution with initial datum $\omega_{0}$, and we deduce from the uniqueness that the convergences hold without extracting a subsequence. This ends the proof of Theorem \ref{main theo}.

\bigskip

\noindent
{\bf Acknowledgement:} The authors are grateful to Thibaut Deheuvels and  Vincent Munnier for references concerning quasiconvexity (see Lemma \ref{lem.TT-1}).

For this work, the first author is supported by ANR project {\sc Aramis} n$^{\rm o}$ ANR-12-BS01-0021. The second author is partially supported by the Project ``Instabilities in Hydrodynamics'' funded by Paris city hall (program ``Emergences'') and the Fondation Sciences Math\'ematiques de Paris. The third author is partially 
  supported by 
NSF grant DMS-1211806.

\def\cprime{$'$}

\adrese

\end{document}